\documentclass[a4paper,11pt]{article}

\usepackage[dvipsnames]{xcolor}
\usepackage[english]{babel}
\usepackage{amssymb, enumerate}
\usepackage{latexsym}
\usepackage[all,cmtip]{xy}
\usepackage{amsmath,amssymb,amsthm}
\usepackage{comment}
\usepackage{tikz}
\usepackage{marvosym}
\usepackage{tabu}

\usepackage{color}

\usepackage{tikz}
\usepackage{tikz-cd}
\usepackage{tkz-euclide}
\usepackage{caption}
\usepackage{enumerate}
\usepackage[all]{xy}
\usepackage{MnSymbol}
\usepackage[margin=1.59in]{geometry}
\usepackage{float}
\usepackage{thmtools}
\usepackage{multicol}
\restylefloat{table}
\usepackage{footmisc}
\renewcommand{\thefootnote}{\fnsymbol{footnote}}
\usetikzlibrary{shapes.geometric,decorations}
\usepackage{titlesec}
\usepackage{listings}
\lstset{basicstyle=\small\ttfamily}
\usepackage{relsize}
\definecolor{burgundy}{rgb}{0.5, 0.0, 0.13}
\definecolor{asparagus}{rgb}{0.163, 0.66, 0.42}
\linespread{1.1} 
\usepackage[colorlinks=true, citecolor=Blue, linkcolor=Blue, urlcolor=Blue, linktocpage=true, pdfpagelabels, ]{hyperref}
\usepackage{cleveref}

\hoffset -25truemm
\oddsidemargin=30truemm %
\evensidemargin=25truemm 
\textwidth=155truemm %
\voffset -25truemm
\topmargin=25truemm 
\headheight=0truemm 
\headsep=0truemm 
\textheight=235truemm 
\delimiterfactor 800 \delimitershortfall 20 pt


\newtheorem{theorem}{Theorem}[section]
\newtheorem*{theorem*}{Theorem}

\newtheorem{lemma}[theorem]{Lemma}
\newtheorem{corollary}[theorem]{Corollary}
\newtheorem{proposition}[theorem]{Proposition}
\newtheorem{remark}[theorem]{Remark}


\newcommand\blfootnote[1]{%
	\begingroup
	\renewcommand\thefootnote{}\footnote{#1}%
	\addtocounter{footnote}{-1}%
	\endgroup
}
\begin{document}\setcounter{MaxMatrixCols}{50}
\title{$2$-Blocks whose defect group is homocyclic and whose inertial quotient contains a Singer cycle}
\author{Elliot Mckernon$^{\dagger}$}
\date{\today}
\maketitle
\begin{abstract}
We consider a block $B$ of a finite group with defect group $D \cong (C_{2^m})^n$ and inertial quotient $\mathbb{E}$ containing a Singer cycle (an element of order $2^n-1$). This implies $\mathbb{E} = E \rtimes F$, where $E \cong C_{2^n-1}$, $F \leq C_n$, and $E$ acts transitively on the elements in $D$ of order $2$, and freely on $D \backslash \{1\}$. We classify the basic Morita equivalence classes of $B$ over a complete discrete valuation ring $\mathcal{O}$: when $m=1$, $B$ is basic Morita equivalent to the principal block of one of $SL_2(2^n) \rtimes F$, $D \rtimes \mathbb{E}$, or $J_1$ (where $J_1$ occurs only when $n=3$). When $m>1$, $B$ is basic Morita equivalent to $D \rtimes \mathbb{E}$.

\blfootnote{${}^{\dagger}$ School of Mathematics, University of Manchester, Manchester, M13 9PL, United Kingdom.\\ Email:
elliot.mckernon@manchester.ac.uk.}

Keywords: Block theory; modular representation theory, finite groups; Donovan's conjecture; Singer cycles; Morita equivalence;

\end{abstract}

\section{Introduction}
Given a finite group $G$ and a field $k$ of prime characteristic $p$ such that $p \divides |G|$, the modular representation theory of $G$ over $k$ reduces to studying summands of the group algebra, $kG$. These are called blocks, and to each block $B$ we associate a $p$-subgroup of $G$ that controls its structure - the defect group. Donovan's conjecture states that, given a $p$-group $D$, there are finitely many blocks with defect group $D$, up to Morita equivalence. 

Donovan's conjecture has been verified in several cases: for example, the main result of \cite{ekks} tells us there are only finitely many Morita equivalence classes of blocks with defect group $(C_2)^n$, for $n \in \mathbb{N}$. This inspired a program of explicitly describing these classes, which has been achieved for $n = 2,3, 4$, and $5$,  in \cite{kleinfour}, \cite{eaton3}, \cite{eaton4}, and \cite{CesC2^5}, respectively. These results and more can be found on the online block library \cite{blockwiki}. The number of classes rises as the rank of $D$ increases, from three when $D \cong (C_2)^2$, to $34$ when $D \cong (C_2)^5$. Classifying blocks when the rank is arbitrary is not realistic, currently: in order to relax our constraints on the rank, we'll need to focus our attention on special classes of block. 

Let $B_D$ be a Brauer correspondent of $B$ in $DC_G(D)$. We write $N_G(D,B_D)$ for the elements in $G$ that stabilise $D$ and $B_D$ under conjugation. The inertial quotient of $B$ is then defined as $N_G(D,B_D)/C_G(D)$. This is a $p'$-group that embeds into $\operatorname{Out}(D)$, and contextualises the structural information that $D$ provides. 

There is a pattern in the $2$-blocks with small elementary abelian defect groups: considering only blocks whose inertial quotient contains an element of order $2^n-1$, the number of Morita equivalence classes stays low even as $n$ increases. Elements of order $2^n-1$ in \linebreak$\operatorname{Out}({C_2}^n) \cong GL_n(2)$ are called Singer cycles, and the subgroups they generate act transitively on the non-trivial elements of $(C_2)^n$. This pattern can also be found when the defect group is homocyclic, and it renders such blocks amenable to analysis. In \Cref{main}, we classify these blocks. 

Throughout, let $k$ be an algebraically closed field of prime charasteristic $2$, and $\mathcal{O}$ a complete discrete valuation ring with residue field $k$ and field of fractions $K$ of characteristic $0$. We assume $K$ is large enough for the groups under consideration. Let $G$ be a finite group such that $2 \divides |G|$, and consider a block $B$ of $\mathcal{O}G$. 

\begin{theorem} \label{main} 
Let $G$ be a finite group, and $B$ be a block of $\mathcal{O}G$ with defect group $D \cong (C_{2^m})^n$,  and inertial quotient $\mathbb{E}$ containing an element of order $2^n-1$. Then $\mathbb{E} \cong E \rtimes F$ where $E \cong C_{2^n-1}$ and $F \leq C_n$, and $B$ is basic Morita equivalent to the principal block of one of the following: \begin{enumerate} \item $D \rtimes (E \rtimes F)$ \label{case1}
		\item $SL_2(2^n) \rtimes F$  \label{case2}
		\item $J_1$  \label{case3}
\end{enumerate} In particular, if $n>3$, then $B$ lies in case (\ref{case1}) or (\ref{case2}), and if $m>1$ then $B$ lies in case (\ref{case1}). \end{theorem}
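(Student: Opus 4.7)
The proof plan is to combine a structural analysis of $\mathbb{E}$ with standard reduction theorems and a CFSG-based enumeration. First I would pin down the shape of $\mathbb{E}$. Since $\mathbb{E}$ embeds in $\operatorname{Aut}(D) \leq GL_n(\mathbb{Z}/2^m\mathbb{Z})$ and has odd order part divisible by $2^n - 1$, reduction modulo $2$ embeds the cyclic subgroup $E$ of order $2^n-1$ into $GL_n(2)$ as a Singer cycle. The normaliser of a Singer cycle in $GL_n(2)$ is $C_{2^n-1}\rtimes C_n$ (the multiplicative group of $\mathbb{F}_{2^n}$ extended by Frobenius), so any odd-order subgroup of $GL_n(\mathbb{Z}/2^m\mathbb{Z})$ containing $E$ lifts into this normaliser. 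This forces $\mathbb{E}=E\rtimes F$ with $F\leq C_n$ and gives the transitive/free action on $D[2]\setminus\{1\}$. In particular the fusion system $\mathcal{F}_D(B)$ is tightly controlled: essential subgroups must be $E$-stable, which rules out all proper non-trivial subgroups of $D$ once $E$ acts freely.

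Next I would apply the standard reduction machinery for blocks with these fusion data, in the spirit of \cite{ekks} and the refinements used in \cite{eaton3,eaton4,CesC2^5}. Fong--Reynolds reduction, combined with the Külshammer--Puig/Puig reduction via a normal subgroup structure, reduces the classification up to basic Morita equivalence to two situations: either $B$ is nilpotently covered by a block of a $p$-solvable group with the same fusion, which by Puig's theorem on nilpotent blocks yields case (\ref{case1}) via the Brauer correspondent $\mathcal{O}(D\rtimes \mathbb{E})$; or $B$ descends from a block of a group in which a quasisimple component $L$ contributes non-trivially to the fusion. In the latter case the transitive $E$-action on $D[2]\setminus\{1\}$ inside $\mathcal{F}_D(B)$ must be realised inside $L$, a very stringent local condition.

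The heart of the argument is then a CFSG-driven sweep. For $m=1$, I would consult the known list of quasisimple groups whose Sylow $2$-subgroup is elementary abelian of rank $n$ with an automorphism of order $2^n-1$ acting transitively on involutions. The Bender--Suzuki-style classification, together with explicit Sylow $2$ tables for the sporadic groups, picks out $L=SL_2(2^n)$ (with $F$ acting via field automorphisms) yielding case (\ref{case2}), and, for $n=3$ only, the sporadic Janko group $J_1$ yielding case (\ref{case3}). For $m\geq 2$, I would argue that no quasisimple group has a homocyclic Sylow $2$-subgroup of exponent $\geq 4$ admitting a Singer-type automorphism transitive on the involutions: the defect-group structure in groups of Lie type in characteristic $2$ is never homocyclic of exponent $>2$ with this fusion, and a direct inspection eliminates the sporadics and the remaining Lie-type/alternating possibilities. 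Once the list of candidate $L$ is in hand, basic Morita equivalence with the named model group follows by matching source algebras, via Puig's theorem that blocks with the same local structure are source-algebra equivalent in these cases.

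The main obstacle is the CFSG step when $m\geq 2$: ruling out all quasisimple sources requires a careful case analysis across families, and I would expect the subtle point to be the Lie-type groups in odd characteristic, whose abelian $2$-radical subgroups can be homocyclic but where one must check that no element of order $2^n-1$ in $N_G(D)/C_G(D)$ acts with the required free action on $D\setminus\{1\}$. A secondary subtlety is matching the precise $F$-extension realised in the inertial quotient with that in $SL_2(2^n)\rtimes F$ or $D\rtimes\mathbb{E}$, since a priori different extensions of $E$ by $F$ could yield inequivalent source algebras; this would be resolved by showing $H^2(F,Z(E))$ vanishes or is absorbed by the Morita equivalence.
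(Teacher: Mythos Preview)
Your structural analysis of $\mathbb{E}$ and the initial reductions are in line with the paper. The serious gap is in the CFSG step: you propose to enumerate quasisimple $L$ whose \emph{Sylow} $2$-subgroup is $(C_{2^m})^n$ with a transitive Singer-type action. But the block $b$ of the component $L$ covered by $B$ need not be principal, so its defect group need not be a Sylow $2$-subgroup of $L$. The paper does not classify such $L$ by Sylow structure at all; it invokes the Eaton--Kessar--K\"ulshammer--Sambale classification of $2$-blocks of quasisimple groups with \emph{abelian defect group} (their Theorem~6.1), which yields six cases, three of which are non-principal: the non-principal block of $Co_3$ with defect $(C_2)^3$, certain nilpotent-covered blocks in type $A_t$ and $E_6$, and a Bonnaf\'e--Dat--Rouquier class in types $D_t$ and $E_7$ with inertial quotient $C_3$. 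Your Sylow-based sweep would miss all of these, and each requires a specific argument (e.g.\ the $Co_3$ block is handled via a known Puig equivalence with $B_0(\operatorname{Aut}(SL_2(8)))$, the nilpotent-covered case via the inertiality results of Puig and Zhou, and the $D_t/E_7$ case by showing that the embedding of $C_3$ as a normal subgroup of $\mathbb{E}$ forces it to act freely on $D$, contradicting its centralising $D\cap M_0$).

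A second gap is the passage from $b$ back to $B$. It is not true that the transitive $E$-action is ``realised inside $L$'': in the $SL_2(2^n)$ case the inertial quotient of $b$ is only $C_{2^n-1}$, and $F$ comes from $\operatorname{Out}(L)$. The paper needs a nontrivial comparison of the inertial quotients of $B$ and $b$ through a chain of intermediate normal subgroups (their Theorem~4.9), together with the fact that $G/L$ is odd (forced by a supersolvability argument on $\operatorname{Out}(L)$ and the homocyclic shape of $D$). Finally, the assertion that ``blocks with the same local structure are source-algebra equivalent'' is not a general theorem of Puig; the paper establishes the basic Morita equivalences case by case, via explicit results of Koshitani, Okuyama, and others.
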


To prove \Cref{main} we will study the subgroup structure of $G$, and show it contains a unique quasisimple component. \cite[Theorem 6.1]{ekks} provides a list of candidates for the structure of this quasisimple and its blocks, and we'll compare these candidates to what we know about $G$ and $B$.

First, in order to understand the action of the inertial quotient, we'll study Singer cycles in \Cref{section Singer}. We'll then collect several standard block-theoretic results in \Cref{section others}, as well as tools more specific to our case in \Cref{section subgroup}. In \Cref{section proof} we will apply these results to prove \Cref{main}. 

\section{Singer Cycles} \label{section Singer}
The key insight into the block considered in \Cref{main} is the action of its inertial quotient on its defect group. In this section we describe and explain that action. 

Let $p$ be a prime and consider the finite field of order $p^n$, $\mathbb{F}_{p^n}$. Recall that the additive group of $\mathbb{F}_{p^n}$ is isomorphic to $(C_p)^n$, and the multiplicative group isomorphic to $ C_{p^n-1}$, where the latter acts by multiplication on the former. This action is transitive on the non-trivial elements. More generally, if $P:= (C_p)^n$ and $G:=\operatorname{Aut}(P) \cong GL_n(p)$, then every element of $G$ of order $p^n-1$ generates a cyclic group which acts transitively on $P \backslash \{1\}$. These elements of order $p^n-1$ in $G$ are called \textit{Singer cycles} of $G$, and they are well-understood:

\begin{proposition} \label{Sings} Let $n$ be a natural number and $p$ be a prime. 
	\begin{enumerate}
		\item $GL_n(p)$ contains a Singer cycle for all $p$ and $n$. 
		\item The Singer cycles in $GL_n(p)$ are the elements of maximal order. 
		\item The subgroups of $GL_n(p)$ that are generated by a Singer cycle are conjugate.
		\item The subgroups of $GL_n(p)$ that are generated by a Singer cycle act regularly and irreducibly on non-zero vectors of the underlying vector space $V$. \label{Singsregular}
		\item Let $X$ be a subgroup of $GL_n(p)$ generated by a Singer cycle. Then $C_{GL_n(p)}(X)=X$ and $N_{GL_n(p)}(X)/X \cong C_n$. \label{intro singer prop normaliser}
	\end{enumerate}
\end{proposition}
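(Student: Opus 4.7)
The plan is to identify each Singer cycle subgroup with the multiplicative group of a subfield of $M_n(\mathbb{F}_p)$ isomorphic to $\mathbb{F}_{p^n}$, at which point all five claims follow from basic Galois theory together with the Skolem--Noether theorem.

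First I would handle (1) and (4) together by identifying $V = \mathbb{F}_p^n$ with the additive group of $L := \mathbb{F}_{p^n}$; multiplication by any generator of $L^*$ is then an $\mathbb{F}_p$-linear automorphism of $V$ of order $p^n - 1$, which settles (1). Regularity on $V \setminus \{0\}$ is immediate from the field axioms, and irreducibility follows because any nonzero $L^*$-invariant $\mathbb{F}_p$-subspace is automatically an $L$-subspace, hence all of $V$. For (2), observe that for any $g \in GL_n(p)$ the algebra $\mathbb{F}_p[g]$ is commutative of $\mathbb{F}_p$-dimension at most $n$ (by Cayley--Hamilton), so the order of $g$ divides $|\mathbb{F}_p[g]^*| \leq p^n - 1$, and Singer cycles achieve this bound.

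For (3), let $X = \langle g \rangle$ be a Singer cycle subgroup. The algebra $L := \mathbb{F}_p[g] \subseteq M_n(\mathbb{F}_p)$ has $\mathbb{F}_p$-dimension at most $n$, yet already contains the $p^n - 1$ distinct powers of $g$ together with $0$, so it has exactly $p^n$ elements and dimension $n$; every nonzero element is then a power of $g$ and hence a unit, forcing $L \cong \mathbb{F}_{p^n}$ and $X = L^*$. Given two Singer cycle subgroups $X_1, X_2$ with associated subfields $L_1, L_2 \subseteq M_n(\mathbb{F}_p)$, the Skolem--Noether theorem produces an element of $GL_n(p)$ conjugating $L_1$ to $L_2$, and hence $X_1$ to $X_2$.

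For (5), $V$ is a $1$-dimensional $L$-vector space, so $C_{M_n(\mathbb{F}_p)}(L) = \operatorname{End}_L(V) = L$, which intersected with $GL_n(p)$ gives $C_{GL_n(p)}(X) = C_{GL_n(p)}(L) = L^* = X$. Conjugation then defines a homomorphism $N_{GL_n(p)}(X) \to \operatorname{Gal}(L/\mathbb{F}_p) \cong C_n$ with kernel $C_{GL_n(p)}(L) = X$, and surjectivity is a second invocation of Skolem--Noether, yielding $N_{GL_n(p)}(X)/X \cong C_n$. The main technical input throughout is Skolem--Noether; the only subtle point is checking that an element normalizing $X$ automatically normalizes $L = \mathbb{F}_p[X]$, which is immediate since $L$ is the $\mathbb{F}_p$-span of $X$.
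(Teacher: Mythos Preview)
Your argument is correct and self-contained. The paper itself does not prove this proposition at all: it simply cites \cite[2.1]{singerovergroups} for (1) and (4), \cite[3.27]{muller} for (2), and \cite[p.187]{huppert2013endliche} for (3) and (5). Your route via the identification $\mathbb{F}_p[g]\cong\mathbb{F}_{p^n}$ together with Skolem--Noether is the standard one that underlies those references, so in spirit you are reproducing what the cited sources do. The payoff of writing it out is that the reader sees immediately why the centraliser and normaliser have the claimed form and why conjugacy holds, without chasing three separate references; the only external input you need is Skolem--Noether for central simple algebras, which is classical. One small stylistic point: in (2) you could note that $\mathbb{F}_p[g]^*$ is a subgroup of $GL_n(p)$ containing $g$, so the divisibility statement is genuinely about group order rather than just a cardinality bound, but this is implicit in what you wrote.
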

\begin{proof}
$(1)$ and $(4)$ are proven in \cite[2.1]{singerovergroups}, $(2)$ in \cite[3.27]{muller}, and $(3)$ and $(5)$ in \cite[p.187]{huppert2013endliche}. \end{proof}

\begin{theorem} \label{intro singer kantor}
	(\cite{kantor}) If $H$ is a subgroup of $GL_n(q)$ that contains a Singer cycle, then there is a natural embedding of $GL_{n/s}(q^s)$ as a normal subgroup of $H$, for some $s \divides n$. 
\end{theorem}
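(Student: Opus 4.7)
The plan is to exploit the natural identification of $V = \mathbb{F}_q^n$ with $\mathbb{F}_{q^n}$ induced by a Singer cycle, and then locate the largest intermediate subfield that is preserved by the action of $H$. Fix a Singer cycle $\sigma$ of $H$. By \Cref{Sings}(\ref{Singsregular}), $\langle\sigma\rangle$ acts regularly on $V \setminus \{0\}$, so picking any nonzero vector $v_0$ and declaring $\sigma^i v_0 \leftrightarrow \omega^i$ (for $\omega$ a generator of $\mathbb{F}_{q^n}^\times$) gives an $\mathbb{F}_q$-linear bijection $V \xrightarrow{\sim} \mathbb{F}_{q^n}$ under which $\sigma$ becomes multiplication by $\omega$. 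For each $s \mid n$, the subfield $\mathbb{F}_{q^s} \subseteq \mathbb{F}_{q^n}$ endows $V$ with an $\mathbb{F}_{q^s}$-vector space structure of dimension $n/s$, and the subgroup $GL_{n/s}(q^s) \hookrightarrow GL_n(q)$ is realized as the centralizer in $GL_n(q)$ of scalar multiplication by $\mathbb{F}_{q^s}^\times$. The unique subgroup $Z_s \leq \langle\sigma\rangle$ of order $q^s-1$ corresponds to $\mathbb{F}_{q^s}^\times$.

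Next I would choose $s$ to be the \emph{maximal} divisor of $n$ for which $Z_s$ is normalised by $H$; such an $s$ exists since $s=1$ works (because $Z_1 = \mathbb{F}_q^\times$ is central in $GL_n(q)$). Once $Z_s \trianglelefteq H$, conjugation by $H$ permutes the scalar subalgebra generated by $Z_s$, which is $\mathbb{F}_{q^s}$ itself; hence $H$ normalises the centraliser of this subalgebra, i.e. $H \leq N_{GL_n(q)}(GL_{n/s}(q^s))$. One verifies this normaliser is $GL_{n/s}(q^s) \rtimes \operatorname{Gal}(\mathbb{F}_{q^s}/\mathbb{F}_q)$, embedded as $\Gamma L_{n/s}(q^s) \cap GL_n(q)$, by identifying normalising elements with semilinear maps of the $\mathbb{F}_{q^s}$-structure on $V$. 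So far this produces a containment $H \leq N_{GL_n(q)}(GL_{n/s}(q^s))$, with $GL_{n/s}(q^s) \cap H$ automatically normal in $H$.

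The crux is then to show that this intersection is the whole of $GL_{n/s}(q^s)$. The strategy is to use the maximality of $s$: suppose $K := H \cap GL_{n/s}(q^s)$ is a proper subgroup of $GL_{n/s}(q^s)$. The Singer cycle $\sigma$ of $GL_n(q)$ remains a Singer cycle of $GL_{n/s}(q^s)$ (viewed over $\mathbb{F}_{q^s}$), and $\sigma \in K$ since $H/K$ embeds into the Galois group of $\mathbb{F}_{q^s}/\mathbb{F}_q$ (a $2'$-group of order dividing $s$) while $\sigma$ lies in the $GL$-part. Arguing inductively (on $n/s$, or on $|H|$), one treats $K$ as a new instance: a proper overgroup of a Singer cycle of a smaller $GL$. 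Coupled with the maximality of $s$, this would force $K$ to contain the full $GL_{n/s}(q^s)$, completing the proof.

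The main obstacle is exactly this last inductive/classification step: extracting full containment $GL_{n/s}(q^s) \leq H$ from the presence of a Singer cycle is not elementary, because there are genuine intermediate overgroups of Singer cycles (normalisers of tori, semilinear groups, exceptional subgroups in small ranks). This is why Kantor's original proof invokes a careful case analysis of maximal subgroups of $GL_n(q)$ containing a Singer cycle, using Zsigmondy primes to control the prime divisors of $|H|$ and rule out overgroups other than $\Gamma L_{n/s}(q^s)$. In a first-principles write-up I would isolate this classification as a separate lemma and cite Hering's theorem on transitive linear groups to handle the exceptional configurations.
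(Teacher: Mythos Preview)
The paper does not prove this statement: it is stated as a theorem of Kantor and attributed to \cite{kantor} with no accompanying proof, serving purely as a black-box input to \Cref{intro singer maximal sub}. So there is no ``paper's own proof'' to compare against; anything you write here goes beyond what the author attempts.

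That said, your outline is a reasonable reconstruction of the architecture of Kantor's argument, and you are candid about where the real work lies. A couple of remarks on the sketch itself. First, your choice of $s$ as the \emph{maximal} divisor with $Z_s \trianglelefteq H$ is the right organising principle, but the inductive step needs to be set up carefully: applying the inductive hypothesis to $K = H \cap GL_{n/s}(q^s)$ inside $GL_{n/s}(q^s)$ yields some $GL_{(n/s)/t}(q^{st}) \trianglelefteq K$, and you then need that this subgroup (or at least its centre $Z_{st}$) is normal not just in $K$ but in $H$, in order to contradict maximality of $s$ when $t>1$. This requires checking that the Galois quotient $H/K \hookrightarrow \operatorname{Gal}(\mathbb{F}_{q^s}/\mathbb{F}_q)$ also normalises $Z_{st}$, which is not automatic. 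Second, your parenthetical that $H/K$ is ``a $2'$-group of order dividing $s$'' is not right in general---$s$ can be even---though this does not affect the argument since you only use that $\sigma$ has order coprime to $s$ (Zsigmondy) to place it in $K$. As you correctly flag, the genuine difficulty is the base case/classification step ruling out exotic overgroups, and Kantor handles this via Zsigmondy primes and the Aschbacher--Hering classification rather than by pure induction.
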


\begin{corollary} \label{intro singer maximal sub}
Let $q=p^r$ for $p$ a prime. Let $G \cong GL_n(q)$, and $H$ be a $p'$-subgroup of $G$ containing an element of order $q^n-1$. If $H$ is maximal with respect to containment, satisfying these properties, then $H \cong C_{q^n-1} \rtimes C_{n'}$, where $n'$ is the largest divisor of $n$ not divisible by $p$.
\end{corollary}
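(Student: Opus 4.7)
The plan is to apply Kantor's theorem (\Cref{intro singer kantor}) to $H$ and use it together with \Cref{Sings}(\ref{intro singer prop normaliser}) (in its obvious extension to $GL_n(q)$) to pin down the structure. Since $H$ contains an element of order $q^n-1$, it contains a Singer cycle, so Kantor gives a natural embedding of $GL_{n/s}(q^s)$ as a normal subgroup of $H$ for some $s \mid n$. The formula
\[
|GL_m(q^s)| = (q^s)^{m(m-1)/2} \prod_{i=1}^{m}\bigl((q^s)^i - 1\bigr)
\]
shows that $|GL_m(q^s)|$ is divisible by $p$ whenever $m \geq 2$, so the assumption that $H$ is a $p'$-group forces $n/s = 1$, i.e., $s = n$. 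Therefore $GL_1(q^n) \cong C_{q^n-1}$ is normal in $H$; and because the embedding is natural, this cyclic subgroup coincides with the Singer cycle $\langle x \rangle$ generated by any element $x \in H$ of order $q^n-1$.

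Next, I would pass to the normaliser. Since $\langle x \rangle \trianglelefteq H$, we have $H \leq N_G(\langle x \rangle)$, and the analogue of \Cref{Sings}(\ref{intro singer prop normaliser}) for $GL_n(q)$ gives $N_G(\langle x \rangle) = \langle x \rangle \rtimes \langle \sigma \rangle$, where $\sigma$ is induced by the $q$-Frobenius on $\mathbb{F}_{q^n}$ and $\langle \sigma \rangle \cong C_n$. Quotienting by $\langle x \rangle$, the image of $H$ is a subgroup of $C_n$, hence equals $\langle \sigma^{n/d} \rangle$ for a unique $d \mid n$. Taking full preimages shows $H = \langle x \rangle \rtimes \langle \sigma^{n/d} \rangle \cong C_{q^n-1} \rtimes C_d$.

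Finally, I would use maximality. Since $q^n-1$ is already coprime to $p$, the group $\langle x \rangle \rtimes \langle \sigma^{n/d} \rangle$ is a $p'$-group precisely when $d$ is coprime to $p$, i.e., $d \mid n'$. The subgroup $\langle x \rangle \rtimes \langle \sigma^{n/n'} \rangle$ is itself a $p'$-subgroup of $G$ containing the order-$(q^n-1)$ element $x$, and it contains $H$. Maximality of $H$ therefore forces $d = n'$, yielding $H \cong C_{q^n-1} \rtimes C_{n'}$.

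The only real point of care is matching the Singer cycle provided abstractly by Kantor's theorem with the concrete $\langle x \rangle$; this is built into the word \emph{natural} in \Cref{intro singer kantor} but deserves an explicit sentence. Beyond that, the argument is bookkeeping with the subgroup lattice of $\langle x \rangle \rtimes \langle \sigma \rangle$ and divisibility of orders, so no serious obstacle is expected.
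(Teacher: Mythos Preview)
Your argument is correct and follows essentially the same route as the paper: apply Kantor's theorem, rule out $s<n$ by divisibility of $|GL_{n/s}(q^s)|$ by $p$, deduce that a Singer cycle subgroup is normal in $H$, pass to its normaliser $C_{q^n-1}\rtimes C_n$, and use maximality to extract the full $p'$-part $C_{q^n-1}\rtimes C_{n'}$. The paper compresses the last step into a single sentence, whereas you spell out the subgroup-lattice bookkeeping explicitly; your added care about identifying the Kantor-supplied $C_{q^n-1}$ with $\langle x\rangle$ is in fact unnecessary, since the rest of the argument only needs \emph{some} normal Singer cycle subgroup $S\lhd H$, and maximality can be tested against $S\rtimes\langle\sigma^{n/n'}\rangle$ directly.
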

\begin{proof}
 \Cref{intro singer kantor} tells us there is an embedding of $GL_{n/s}(q^s)$ in $H$ with normal image, for some divisor $s$ of $n$.  If $s<n$, then $|GL_{n/s}(q^s)|=(q^n-1)(q^n-q^s)...(q^n-q^{(n-1)s})$. This implies $p  \divides |H|$, contradicting our assumption. Thus we must have  $n=s$, and so $GL_1(q^n) \cong C_{q^n-1} \lhd H$, implying $H \lhd N_{GL_n(q)}(C_{q^n-1})$. \Cref{Sings} (\ref{intro singer prop normaliser}) then tells us $H \cong C_{q^n-1} \rtimes C_{n'}$, where $n'$ is the largest $p'$ divisor of $n$.
\end{proof}

\begin{remark} \label{homo} We can extend this analysis to homocyclic groups: let $Q=(C_{p^m})^n$, and recall that $\Omega(Q)$ denotes the elements of order $p$ in $Q$, so that $\Omega(Q)=(C_p)^n$.  Consider the map $\varphi: \operatorname{Aut}(Q) \to \operatorname{Aut}(\Omega(Q))$ where $\varphi(\alpha)$ is the automorphism of $\Omega{(Q)}$ induced by $\alpha$. This is a surjective homomorphism, and if $\alpha \in \ker \varphi$ then the order of $\alpha$ is a power of $p$, by \cite[Theorem 1.15]{PRIMEPOWER}, implying $\ker(\varphi)=O_p(\operatorname{Aut}(Q))$ and $\operatorname{Aut}(Q)/O_p(\operatorname{Aut}(Q)) \cong GL_n(p)$. Thus \Cref{Sings} tells us that $\operatorname{Aut}(Q)$ contains a unique conjugacy class of elements of order $p^n-1$, and these elements generate groups which act transitively on $\Omega(Q) \backslash \{1\}$. An automorphism of $Q$ is determined by its action on the generators, and so there must be a set of generators of $Q$ that are transitively permuted by a Singer cycle subgroup. In particular, cyclic subgroups of $\operatorname{Aut}(Q)$ of order $p^n-1$ must act freely on the non-trivial elements of $Q$. \end{remark}


\begin{lemma} \label{homo Sing lem} Let $Q \cong (C_{p^m})^n$ and $\mathbb{E} \leq \operatorname{Aut}(Q)$ such that $\mathbb{E}$ has $p'$ order and contains a Singer cycle. Then the following hold: \begin{enumerate}
\item $\mathbb{E} \cong E \rtimes F$ where $E \cong C_{p^n-1}$ and $F \leq F' \cong C_n$. \label{'1}
\item If $f$ generates $F'$, then $F$ fixes a subgroup $(C_p)^r \leq \Omega(Q)$ if and only if $F=\langle f^r \rangle$. \label{'2}
\item If $H$ is a non-trivial normal subgroup of $\mathbb{E}$, then $C_Q(H)=1$. \label{'3}
\end{enumerate}    \end{lemma}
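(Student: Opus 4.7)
The plan is to reduce to a question about $GL_n(p)$ via the surjection $\varphi : \operatorname{Aut}(Q) \twoheadrightarrow GL_n(p)$ of \Cref{homo}, whose kernel is $O_p(\operatorname{Aut}(Q))$. Since $\mathbb{E}$ has $p'$-order, $\varphi$ embeds $\mathbb{E}$ isomorphically onto $\bar{\mathbb{E}} := \varphi(\mathbb{E}) \leq GL_n(p)$, a $p'$-subgroup of $GL_n(p)$ containing a Singer cycle. For (\ref{'1}), I would embed $\bar{\mathbb{E}}$ in a maximal such subgroup; by \Cref{intro singer maximal sub} this overgroup is $\bar M \cong C_{p^n-1} \rtimes C_{n'}$, sitting inside $N_{GL_n(p)}(C_{p^n-1}) = \Gamma L_1(p^n)$ with $C_{n'}$ acting on $C_{p^n-1} \cong \mathbb{F}_{p^n}^\times$ as a Galois subgroup generated by $\phi^{n/n'}$. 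The key step is then a direct order computation showing that every element of $\bar M$ of order $p^n-1$ lies in its normal $\mathbb{F}_{p^n}^\times$: for $\tau = e^a c^s$ with $\operatorname{ord}(c^s) = d \geq 2$, one obtains $\tau^d = e^{a(p^n-1)/(p^{n/d}-1)}$, so
\[
\operatorname{ord}(\tau) \;=\; \frac{d\,(p^{n/d}-1)}{\gcd\bigl(a,\,p^{n/d}-1\bigr)} \;\leq\; d\,(p^{n/d}-1) \;<\; p^n-1,
\]
the strict inequality holding because $1 + p^{n/d} + \cdots + p^{(d-1)n/d} > d$. Hence the Singer cycle $E$ of $\bar{\mathbb{E}}$ coincides with the normal $\mathbb{F}_{p^n}^\times$ of $\bar M$, and the split structure of $\bar M$ yields $\bar{\mathbb{E}} = E \rtimes F$ with $F = \bar{\mathbb{E}} \cap C_{n'} \leq C_n$; pulling back through $\varphi$ gives the decomposition claimed for $\mathbb{E}$.

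For (\ref{'2}), the identification in (\ref{'1}) realises the $F'$-action on $\Omega(Q) \cong \mathbb{F}_{p^n}$ as the Galois group $\langle \phi \rangle$ with $\phi(x) = x^p$; choosing $f$ to correspond to $\phi$, the pointwise-fixed subspace of $F = \langle f^r \rangle$ is the subfield $\mathbb{F}_{p^{\gcd(r,n)}}$, which is $(C_p)^r$ exactly when $r \mid n$. The iff then follows because every $F \leq F'$ has a unique expression $F = \langle f^d \rangle$ with $d \mid n$, whose maximal pointwise-fixed subgroup in $\Omega(Q)$ has rank exactly $d$, so comparison of ranks forces $d = r$. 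For (\ref{'3}), let $1 \neq H \lhd \mathbb{E}$ and split on $H \cap E$. If $H \cap E \neq 1$, pick a nontrivial $h \in H \cap E$; by \Cref{homo} the Singer cycle acts freely on $Q \setminus \{1\}$, so $C_Q(h) = 1$, whence $C_Q(H) \leq C_Q(\langle h \rangle) = 1$. If instead $H \cap E = 1$, then since $H$ and $E$ are both normal in $\mathbb{E}$, $[H,E] \leq H \cap E = 1$, so $H$ centralises $E$; applying $\varphi$ together with \Cref{Sings}(\ref{intro singer prop normaliser}) gives $\varphi(H) \leq C_{GL_n(p)}(E) = E$, forcing $H \leq E$ and hence $H = 1$, a contradiction.

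The main obstacle is the order computation in (\ref{'1}) that confines every order-$(p^n-1)$ element of $\bar M$ to the normal Singer cycle: without it, $E$ is only known to be \emph{some} Singer cycle of $\bar M$, and its normality in $\bar{\mathbb{E}}$ is not automatic (two distinct Singer cycles could in principle sit inside $\bar{\mathbb{E}}$). Once this step is in hand, parts (\ref{'2}) and (\ref{'3}) follow cleanly from the Galois-theoretic picture and the free Singer-cycle action recorded in \Cref{homo}.
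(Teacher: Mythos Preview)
Your proof is correct. Parts (\ref{'1}) and (\ref{'2}) follow essentially the same Galois-theoretic route as the paper, via \Cref{intro singer maximal sub} and the identification of $F'$ with the Frobenius action on $\mathbb{F}_{p^n}$. Your order computation in (\ref{'1}) is sound, but note that it is not really the obstacle you make it out to be: rather than passing to a maximal overgroup $\bar M$ and then arguing that the given Singer cycle coincides with the normal one there, you can apply Kantor's theorem (\Cref{intro singer kantor}, as in the proof of \Cref{intro singer maximal sub}) directly to the $p'$-group $\bar{\mathbb{E}}$ itself. This already produces a \emph{normal} copy of $GL_1(p^n)\cong C_{p^n-1}$ inside $\bar{\mathbb{E}}$ acting as a Singer cycle, and then $\bar{\mathbb{E}}\leq N_{GL_n(p)}(E)=E\rtimes C_n$ forces $\bar{\mathbb{E}}=E\rtimes F$ with $F\leq C_n$ immediately. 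Your computation is a pleasant explicit confirmation that $N_{GL_n(p)}(E)$ contains a unique Singer cycle, but it is not needed for the argument.

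For (\ref{'3}) you take a genuinely different route from the paper. The paper observes that $C_Q(H)$ is $\mathbb{E}$-invariant (since $H\lhd\mathbb{E}$), hence contains the full $\mathbb{E}$-orbit of any of its elements, and then uses that a nontrivial such orbit forces $H$ to centralise $Q$. Your case-split on $H\cap E$ is cleaner: in the case $H\cap E\neq 1$ you invoke the free action of $E$ on $Q\setminus\{1\}$ from \Cref{homo} directly, and in the case $H\cap E=1$ you use $[H,E]\leq H\cap E=1$ together with $C_{GL_n(p)}(E)=E$ from \Cref{Sings}(\ref{intro singer prop normaliser}) to force $H=1$. This avoids any appeal to the lattice of $E$-invariant subgroups of $Q$ and makes the dependence on \Cref{Sings} and \Cref{homo} fully explicit; the paper's orbit argument is shorter to state but leaves the passage from ``$H$ fixes one $\mathbb{E}$-orbit'' to ``$H$ centralises all of $Q$'' implicit.
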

\begin{proof}
\Cref{intro singer maximal sub} tells us $C_{p^n-1} \leq \mathbb{E} \leq C_{p^n-1} \rtimes F'$, where $F' \cong C_n$, proving (\ref{'1}). Further, $F'$ corresponds to the field automorphisms of $\mathbb{F}_{p^n}$, and thus is generated by the Frobenius endomorphism $\varphi:x \mapsto x^p$. The fixed points of $\mathbb{F}_{p^n}^+ \cong \Omega(Q)$ correspond to subfields of $\mathbb{F}_{p^n}$, and thus $F$ fixes $(C_p)^r \leq \Omega(Q)$ if and only if $F$ is generated by $\varphi^r$, proving (\ref{'2}).

To prove (\ref{'3}), let $H \lhd \mathbb{E}$, and let $q \in Q$ such that $H$ fixes $q$, i.e. $H \leq Stab_{\mathbb{E}}(q)$. Since $H$ is normal, for all $x \in \mathbb{E}$ we have $H={}^xH \leq Stab_{\mathbb{E}}({}^xq)$. Thus $H$ fixes the orbit of $q$ under the action of $\mathbb{E}$. Therefore either $q=1$ or $H$ centralises all of $Q$. The latter is a contradiction, since $H$ contains non-trivial automorphisms. Thus $C_Q(H)=1$. \end{proof}

\section{Collecting Tools} \label{section others}
We can study a block $B$ of $G$ by comparing it to blocks of subgroups of $G$. If $B$ has defect group $D$, the Brauer correspondence relates $B$ to blocks of subgroups $H$ satisfying $DC_G(D) \leq H \leq N_G(D)$. This correspondence lies ``at the very heart of block theory'' \cite{alperincorre}: the defect group is defined as the largest $p$-subgroup $D$ such that $B$ has a Brauer correspondent in $DC_G(D)$, and Brauer's first main theorem tells us that the correspondence defines a bijection between blocks of $G$ with defect group $D$ and blocks of $N_G(D)$ with defect group $D$.


We can also relate blocks of $G$ to blocks of normal subgroups: when $N \lhd G$ and $b$ is a block of $N$, we say $B$ \textbf{covers} $b$ if $Bb \neq 0$. The comparison of $B$ and $b$ is called Clifford theory. 

\begin{proposition} (\cite[15.1]{alperin}) \label{alp15} Let $N \lhd G$ be finite groups, and let $B$ be a $p$-block of $\mathcal{O}G$ covering a block $b$ of $\mathcal{O}N$. Then the following hold: \begin{enumerate} \item The blocks of $N$ covered by $B$ form a $G$-conjugacy class. 
		\item Each defect group of $b$ is the intersection of a defect group of $B$ with $N$. \label{alpintersect}
		\item There is a block $B'$ of $G$ covering $b$ such that $B'$ has a defect group $D'$ satisfying 
		
		{\centering $ [D':D' \cap N]=[\operatorname{Stab}_G(b):N]_p.$ \label{alpblockabove}\par} 
		
		\item If the centraliser in $G$ of a defect group of $b$ is contained in $N$ then $B=b^G$ and $B$ is the only block of $G$ covering $b$. \label{alp CG(D)<N}	\end{enumerate}\end{proposition}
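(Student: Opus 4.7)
The plan is to deduce all four statements from standard Clifford-theoretic machinery applied to the action of $G$ on the block decomposition of $\mathcal{O}N$. Write the unit $1 = \sum_i e_{b_i}$ as a sum of the block idempotents of $\mathcal{O}N$. Conjugation by $g \in G$ permutes the $e_{b_i}$ because the blocks of $N$ are intrinsically defined, and the orbits of this action are exactly the $G$-conjugacy classes of blocks of $N$. Since $e_B \in Z(\mathcal{O}G)$ is $G$-invariant, the set $\{b_i : e_B e_{b_i} \neq 0\}$ is a union of $G$-orbits.

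For (1), I would argue that this union is a single orbit. If $\mathcal{O}_1, \mathcal{O}_2$ were two distinct orbits both contributing, then $e_B \sum_{b' \in \mathcal{O}_1} e_{b'}$ and $e_B \sum_{b' \in \mathcal{O}_2} e_{b'}$ would be nonzero orthogonal central idempotents summing to $e_B$, contradicting the primitivity of $e_B$ in $Z(\mathcal{O}G)$. For (2), I would work with Brauer pairs. Pick a maximal $B$-Brauer pair $(D, e_D)$. The key local calculation is that $\operatorname{Br}_{D \cap N}$, applied to the idempotent of a block of $N$ covered by $B$, detects a defect Brauer pair for that block, and $D \cap N$ is maximal among $p$-subgroups of $N$ with this property; this realises $D \cap N$ as a defect group of $b$ up to $N$-conjugacy.

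For (3), I would invoke the Fong–Reynolds correspondence. Let $T = \operatorname{Stab}_G(b)$. Induction gives a bijection between blocks $\tilde{B}$ of $T$ covering $b$ and blocks of $G$ covering $b$, with $\tilde{B}^G$ of defect equal to the defect of $\tilde{B}$. A defect group $D'$ of some $\tilde{B}$ can be chosen so that $D' \cap N$ is a defect group of $b$ and $D'/(D'\cap N)$ is a Sylow $p$-subgroup of $T/N$, giving the desired index.

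For (4), suppose $C_G(D_b) \leq N$ for a defect group $D_b$ of $b$. Using (2), $D_b = D \cap N$ for some defect group $D$ of $B$; the hypothesis forces $D \leq N$, so $D = D_b$ and $\operatorname{Stab}_G(b) = N$ by a standard argument that any $g$ stabilising $b$ must stabilise (a conjugate of) $D_b$ and then lie in $D_b C_G(D_b) \leq N$. Combined with (1), this yields uniqueness of $B$. Brauer's induction condition $D_b C_G(D_b) \leq N$ is satisfied, so $b^G$ is defined; comparing block idempotents shows $b^G = B$. The main obstacle is (4): the care needed to pin down the interaction between defect groups, the stabiliser of $b$, and the centraliser hypothesis, relying on the Brauer pair framework to translate ``centraliser in $N$'' into ``stabiliser in $N$''.
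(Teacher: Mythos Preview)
The paper does not supply its own proof of this proposition: it is quoted verbatim from \cite[15.1]{alperin} and stated without argument. So there is nothing in the paper to compare your sketch against; the paper treats (1)--(4) as background.

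That said, your outline for (1)--(3) is essentially the standard route and is fine. Your argument for (4), however, contains a real error. From $C_G(D_b)\leq N$ and $D_b=D\cap N$ you assert ``the hypothesis forces $D\leq N$'' and then ``$\operatorname{Stab}_G(b)=N$''. Neither follows. Take $G=S_4$, $N=A_4$, $p=2$: the principal $2$-block $b$ of $A_4$ has defect group $D_b\cong V_4$ with $C_G(V_4)=V_4\leq N$, yet the principal block $B$ of $S_4$ has defect group $D\cong D_8\not\leq N$, and $b$ is $G$-stable so $\operatorname{Stab}_G(b)=G\neq N$. The conclusion of (4) still holds in this example, but not via your mechanism. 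The correct argument does not pass through $D\leq N$ or through shrinking the stabiliser; one shows directly that $b^G$ is defined (because $D_bC_G(D_b)\leq N$) and then that any block of $G$ covering $b$ must coincide with $b^G$ via the characterisation of $b^G$ through the Brauer homomorphism $\operatorname{Br}_{D_b}$, using that $C_G(D_b)=C_N(D_b)$ so the local data in $G$ and in $N$ agree.
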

	

Considering the prominent role of the defect group in \Cref{alp15}, it is no surprise that we can extract more information about the relationship between $B$ and $b$ when the index of $N$ is a power of $p$:

\begin{proposition} \label{index p}
	Let $N \lhd G$ be finite groups such that $[G:N]$ is a power of a prime $p$. Let $B$ be a $p$-block of $G$ covering a block $b$ of $N$. Then $B$ is the unique block of $G$ covering $b$, and if $b$ is $G$-stable then $B$ and $b$ share a block idempotent. \end{proposition}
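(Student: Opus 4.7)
The plan is to prove the two assertions in sequence: the uniqueness of $B$ is the substantive claim, and the idempotent-sharing statement will follow at once. The approach is to reduce uniqueness to the case where $b$ is $G$-stable via a Fong--Reynolds/Clifford-theoretic argument, and then handle that case by exploiting the crossed-product structure of $\mathcal{O}G$ over $\mathcal{O}N$ in residue characteristic $p$.

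First I would perform the reduction. Let $T = \operatorname{Stab}_G(b)$, so $N \lhd T \lhd G$ and $[T:N]$ remains a power of $p$. The Fong--Reynolds correspondence supplies a bijection between blocks of $T$ covering $b$ and blocks of $G$ covering $b$, via block induction. Uniqueness for $G$ is therefore equivalent to uniqueness for $T$, and since $b$ is $T$-stable by construction, I may replace $G$ by $T$ and assume throughout that $b$ is $G$-stable.

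Assume now that $b$ is $G$-stable. Then $e_b \in Z(\mathcal{O}N)$ is invariant under $G$-conjugation and hence lies in $Z(\mathcal{O}G)$. The blocks of $G$ covering $b$ correspond to the primitive central idempotents $e_{B'} \in Z(\mathcal{O}G)$ satisfying $e_{B'} e_b = e_{B'}$, and these idempotents sum to $e_b$ in view of \Cref{alp15}(1). Uniqueness of $B$ thus amounts to the claim that $e_b$ is itself primitive in $Z(\mathcal{O}G)$, and since $\mathcal{O}G$ is module-finite over the complete local ring $\mathcal{O}$, primitive central idempotents lift bijectively from $kG$. It therefore suffices to show that the image $\bar e_b \in Z(kG)$ is primitive.

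The main obstacle is this primitivity claim. My approach is to use the crossed-product decomposition $\bar e_b\, kG \cong (\bar e_b\, kN) \ast (G/N)$, which is valid because $kG$ is $kN$-free on a set of coset representatives and $\bar e_b$ is $G$-fixed. The center $Z(\bar e_b\, kN)$ is local, being the center of a single block of $kN$. A standard modular-representation-theoretic fact---that a $p$-group acting (possibly with a cocycle) on an algebra whose center is local, in residue characteristic $p$, produces a crossed product whose center is again local---then forces $Z(\bar e_b\, kG)$ to be local, giving the desired primitivity. Once uniqueness is in hand, the idempotent-sharing claim is immediate: in the $G$-stable setting $e_b$ is itself the primitive central idempotent of $\mathcal{O}G$ corresponding to the unique block $B$ over $b$, so $e_B = e_b$.
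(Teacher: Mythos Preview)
Your proposal is correct. The paper does not give an argument at all: it simply cites Feit [V, Lemma 3.5] for the uniqueness and Linckelmann [Proposition 6.8.11] for the shared idempotent. What you have written is essentially the standard proof lying behind those citations---reduce to the $G$-stable case via Fong--Reynolds, observe that $e_b$ then lies in $Z(\mathcal{O}G)$, pass to $k$, and use that a crossed product of a block by a $p$-group in characteristic $p$ remains a block. So the approaches agree in spirit; yours is self-contained where the paper defers to the literature. The one place you lean on an unreferenced ``standard fact'' (locality of the centre of $(\bar e_b\,kN)\ast(G/N)$) is exactly the content of Linckelmann's Proposition 6.8.11, so a reader wanting a pinpoint citation could insert that there.
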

\begin{proof}
	\cite[V, Lemma 3.5.]{feit} tells us $B$ is the unique block covering $b$, and \cite[Proposition 6.8.11]{linckelmann} tells us that $B$ and $b$ share a block idempotent when $b$ is $G$-stable.
\end{proof}

When $G/N$ is solvable we can relate the index of $N$ in $G$ to the index of $D \cap N$ in $D$, as described in \Cref{solv quotient}. In particular, when $G/N$ is solvable and $D \leq N$, we will be able to assume that $p \not \divides [G:N]$, in which case \Cref{zhou} permits further comparison of $B$ and $b$. Recall that a block of $G$ is\textit{ quasiprimitive} if every block it covers is $G$-stable. 

\begin{lemma} {(\cite[2.4]{CesC2^5})} \label{solv quotient}
	Let $N \lhd G$ be finite groups such that $G/N$ is solvable, and $B$ a quasiprimitive block of $G$ with abelian defect group $D$. Then $DN/N$ is a Sylow-$p$-subgroup of $G/N$. 
\end{lemma}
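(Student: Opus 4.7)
\emph{Proof plan.} I would proceed by induction on $|G/N|$; the base case $G = N$ is immediate. For the inductive step, pick a normal subgroup $M \lhd G$ with $N \leq M \lneq G$ and $G/M$ a minimal normal subgroup of $G/N$. Since $G/N$ is solvable, $G/M$ is an elementary abelian $q$-group for some prime $q$.

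By Clifford theory (\Cref{alp15}), the blocks of $M$ covered by $B$ form a single $G$-conjugacy class, and quasiprimitivity of $B$ collapses this class to a single $G$-stable block $\beta$; \Cref{alp15} (\ref{alpintersect}) allows us to pick $D \cap M$ as a defect group of $\beta$. The data $(\beta, M, N)$ satisfy the hypotheses of the lemma: $D \cap M$ is abelian as a subgroup of $D$, $M/N$ is solvable, and any block of $N$ covered by $\beta$ is also covered by $B$, hence $G$-stable and in particular $M$-stable. Induction therefore yields that $(D \cap M)N/N$ is a Sylow $p$-subgroup of $M/N$.

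It then remains to compare $D$ with $D \cap M$. If $q \neq p$, then $DM/M$ is a $p$-subgroup of the $p'$-group $G/M$ and so is trivial, giving $D \leq M$; combined with $[G:N]_p = [M:N]_p$ this settles the case. If $q = p$, then $G/M$ is a $p$-group and $\beta$ is $G$-stable, so \Cref{index p} forces $B$ to be the unique block of $G$ covering $\beta$. In particular, the block produced by \Cref{alp15} (\ref{alpblockabove}) (with defect group $D^*$ satisfying $[D^*:D^* \cap M] = [\operatorname{Stab}_G(\beta):M]_p = [G:M]$) must equal $B$, so $[D:D \cap M] = [G:M]$, and together with the inductive conclusion this gives $DN/N \in \operatorname{Syl}_p(G/N)$.

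The main subtlety I anticipate is the inductive bookkeeping: I need only the weaker property that blocks of $N$ covered by $\beta$ are $M$-stable (which propagates automatically from the hypothesis on $B$), rather than full quasiprimitivity of $\beta$ as a block of $M$, and I must select defect groups compatibly so that $D \cap M$ is simultaneously a defect group of $\beta$ and a genuine subgroup of $D$. Once these points are handled, the split on $q = p$ versus $q \neq p$ and the two clean appeals to \Cref{index p} and \Cref{alp15} (\ref{alpblockabove}) close the argument with no further calculation required.
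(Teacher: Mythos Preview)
The paper does not prove this lemma --- it is cited from \cite[2.4]{CesC2^5} --- so there is no argument to compare against directly; but your induction is set up in the wrong direction, and the issue you flag in your final paragraph is a genuine gap that your proposed weakening does not close. If you induct on the full hypothesis ``$B$ quasiprimitive'', the recursive call to $(\beta,M,N)$ fails: $\beta$ need not be quasiprimitive as a block of $M$, since normal subgroups of $M$ are not in general normal in $G$. If instead you induct on the weaker hypothesis ``the block of $N$ covered is $G$-stable'', the recursion goes through but the top step breaks: with only that hypothesis you no longer know that $\beta$ is $G$-stable, so in the $q=p$ case \Cref{alp15}(\ref{alpblockabove}) yields only $[D:D\cap M]=[\operatorname{Stab}_G(\beta):M]_p$, which may be strictly smaller than $[G:M]$. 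You cannot use the strong hypothesis at the top level and the weak one in the recursion --- the inductive hypothesis has to match the statement being proved.

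The fix is to reverse the direction: choose $M$ with $M/N$ (not $G/M$) a minimal normal subgroup of $G/N$, and recurse on $(B,G,M)$ rather than on $(\beta,M,N)$. Since $|G/M|<|G/N|$ and $B$ is unchanged (hence still quasiprimitive with abelian defect $D$), induction gives $DM/M\in\operatorname{Syl}_p(G/M)$ directly. The bottom step from $M$ to $N$ is then handled by exactly your computation: if $q\neq p$ then $(D\cap M)N/N$ is a $p$-subgroup of the $q$-group $M/N$, so $D\cap M=D\cap N$; if $q=p$ then the covered block $b$ of $N$ is $G$-stable (hence $M$-stable) by quasiprimitivity of $B$, and \Cref{index p} together with \Cref{alp15}(\ref{alpblockabove}) applied to the pair $N\lhd M$ give $[D\cap M:D\cap N]=[M:N]$. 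In either case $|DN/N|=|DM/M|\cdot[M:N]_p=[G:M]_p\cdot[M:N]_p=[G:N]_p$.
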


Following \cite{linckelmann}, two algebras, $A$ and $B$, are Morita equivalent if and only if there is an $(A,B)$-bimodule $M$ and a $(B,A)$-bimodule $N$ such that $M \otimes_B N \cong A$ as $(A,A)$-bimodules, and $N \otimes_A M \cong B$ as $(B,B)$-bimodules. When the bimodules have endopermutation source, $A$ and $B$ are \textit{basic Morita equivalent}. When they have trivial source, $A$ and $B$ are \textit{Puig equivalent}. Note that Puig equivalence is stronger than basic Morita, which is stronger than Morita. In particular, a basic Morita equivalence between blocks preserves the defect group and fusion system, and thus preserves the inertial quotient.

A block $B$ of $G$ with defect group $D$ is said to be \textit{inertial} if it is basic Morita equivalent to its Brauer correspondent in $\mathcal{O}N_G(D)$. If $D$ is abelian, then $B$ is nilpotent if and only if the inertial quotient of $B$ is trivial, and $B$ is called \textit{nilpotent-covered} if there is some $G \lhd H$ such that a nilpotent block $B_H$ of $H$ covers $B$. 

\begin{proposition} \label{zhou}
	Let $N \lhd G$ be finite groups and $B$ be a block of $G$ covering a block $b$ of $N$. \begin{enumerate}
		\item If $b$ is nilpotent-covered, then $b$ is inertial. \label{zhou b nilp b inert}
		\item If $p \not \divides [G:N]$ and $b$ is inertial, then $B$ is inertial.  \label{zhou b inert B inert}
	\end{enumerate}
\end{proposition}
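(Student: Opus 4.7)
The proposition is a known result due to Puig and Zhou; my plan would be to prove it in the natural language of source algebras, since basic Morita equivalences are most tractable at that level. For part (1), I would start from Puig's structure theorem for nilpotent blocks, which asserts that a nilpotent block $B_H$ of a group $H$ has source algebra isomorphic, as an interior $D_{B_H}$-algebra, to the group algebra $\mathcal{O}D_{B_H}$. Given that $b$ is covered by such a $B_H$ for some $N \lhd H$, I would apply Clifford theory of covers to realise the source algebra of $b$ as a specific subalgebra of $\mathcal{O}D_{B_H}$, determined by the fusion data of $B_H$, and then identify this subalgebra with the source algebra of the Brauer correspondent of $b$ in $\mathcal{O}N_N(D_b)$. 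This identification is precisely the inertial condition for $b$.

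For part (2), let $D$ be a defect group of $B$. Since $D/(D \cap N)$ embeds into $G/N$ and $p \nmid [G:N]$, we have $D \subseteq N$, and by \Cref{alp15}(\ref{alpintersect}), $D$ is also a defect group of $b$. Consequently $[N_G(D) : N_N(D)]$ divides $[G:N]$ and is coprime to $p$. Let $b'$ denote the Brauer correspondent of $b$ in $\mathcal{O}N_N(D)$, and $B'$ that of $B$ in $\mathcal{O}N_G(D)$; standard compatibility of the Brauer correspondence with covering then gives that $B'$ covers $b'$. The hypothesis supplies an $(\mathcal{O}Nb,\, \mathcal{O}N_N(D)b')$-bimodule $M$ with endopermutation source implementing the basic Morita equivalence $b \sim b'$, and my plan is to show that the induced bimodule
$$\widetilde{M} \;=\; \mathcal{O}Gb \otimes_{\mathcal{O}Nb} M \otimes_{\mathcal{O}N_N(D)b'} \mathcal{O}N_G(D)b'$$
implements a basic Morita equivalence $B \sim B'$.

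The main obstacle is verifying the two defining properties of $\widetilde{M}$. The endopermutation condition on sources is preserved under induction along a subgroup of $p'$-index, which handles the source property. Invertibility — establishing $\widetilde{M} \otimes \widetilde{M}^{\vee} \cong \mathcal{O}Gb$ and symmetrically on the other side — is the more delicate step: it relies on the uniqueness of the block of $G$ covering $b$ (cf.\ \Cref{index p}, or a suitable refinement applied to the $p'$-quotient $G/N$), together with the coprimality of $[G:N]$ and $p$, which ensures that trace maps and block idempotents on each side align correctly. Rather than grinding through these identifications in full generality, the proof would ultimately be a citation of the corresponding source-algebra statements in the Puig/Zhou literature.
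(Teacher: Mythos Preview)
The paper's own proof of this proposition is two bare citations: part (1) is \cite[Corollary 4.3]{Puigext2} and part (2) is the main result of \cite{zhou}. Your proposal ultimately lands in the same place, so in that sense it matches the paper.

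That said, the sketch you offer en route contains two inaccuracies worth flagging. First, Puig's structure theorem does not say that the source algebra of a nilpotent block is $\mathcal{O}D_{B_H}$; it says the source algebra is isomorphic to $\operatorname{End}_{\mathcal{O}}(V)\otimes_{\mathcal{O}}\mathcal{O}D_{B_H}$ for an endopermutation $\mathcal{O}D_{B_H}$-module $V$, and the presence of this endopermutation factor is exactly why the resulting equivalence is only \emph{basic} Morita rather than Puig. Second, and more seriously, you invoke ``uniqueness of the block of $G$ covering $b$ (cf.\ \Cref{index p}, or a suitable refinement)'' in the $p'$-index situation. But \Cref{index p} concerns the opposite case, $[G:N]$ a power of $p$, and when $[G:N]$ is coprime to $p$ there is no such uniqueness in general (already $N=1$, $G$ a nontrivial $p'$-group gives a counterexample). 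Zhou's actual argument first applies Fong--Reynolds to reduce to the $G$-stable case and then works with the K\"ulshammer--Puig description of source algebras under extensions; the induction-of-bimodules picture you describe does not straightforwardly produce a $(B,B')$-bimodule without these reductions. Since you concede at the end that the proof is ultimately a citation, none of this is fatal, but the heuristic outline for (2) would not stand on its own.
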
 
\begin{proof}
	(\ref{zhou b nilp b inert}) is \cite[Corollary 4.3]{Puigext2}, and (\ref{zhou b inert B inert}) is the main result of \cite{zhou}. 
\end{proof}

Given a block $b$ of $\mathcal{O}N$, the Fong-Reynolds correspondence bijects blocks of $\mathcal{O}G$ covering $b$ with blocks of $N_G(N,b)$ covering $b$, and this correspondence preserves the basic Morita equivalence class \cite[Theorem 1.18]{Sambale2014}. This lets us replace the objects we're studying with simpler objects, without losing vital information, as in the first Fong reduction:  
\begin{theorem} (\cite[6.8.3]{linckelmann}) \label{F1} 
Let $N \lhd G$ be finite groups, and let $B$ be a block of $\mathcal{O}G$ covering a block $b$ of $\mathcal{O}N$. Then there is a unique block $\tilde{B}$ of $\operatorname{Stab}_G(b)$ that is covered by $B$ and covers $b$. Further, $B$ and $\tilde{B}$ have the same defect group and fusion system.
\end{theorem}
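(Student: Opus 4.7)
The plan is to establish the Fong--Reynolds correspondence between blocks of $\mathcal{O}G$ covering $b$ and blocks of $\mathcal{O}T$ (where $T := \operatorname{Stab}_G(b)$) covering $b$, and then transport defect groups and fusion systems across it via source idempotents.

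Let $e \in \mathcal{O}N$ be the block idempotent of $b$; because $b$ is $T$-stable, $e$ is also central in $\mathcal{O}T$. By \Cref{alp15}(1), the distinct $G$-conjugates $e = e_1, \ldots, e_r$ (with $r = [G:T]$) are pairwise orthogonal block idempotents of $\mathcal{O}N$, and their sum $f$ is a central idempotent of $\mathcal{O}G$; a block $B'$ of $\mathcal{O}G$ covers $b$ precisely when $e_{B'} f = e_{B'}$. The first step is to show that the relative trace
\[
\operatorname{Tr}_T^G : Z(\mathcal{O}T)e \longrightarrow Z(\mathcal{O}G)f, \qquad x \longmapsto \sum_{g \in [G/T]} {}^g x,
\]
is a unital $\mathcal{O}$-algebra isomorphism. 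Multiplicativity reduces to the vanishing $({}^g x)({}^h y) = 0$ whenever $gT \neq hT$, which follows from $xe = x$ and $e_i e_j = 0$ for $i \neq j$; surjectivity follows by recovering a $G$-invariant element of $\mathcal{O}Gf$ from its $e$-component. Passing to primitive idempotents then yields a bijection between blocks of $\mathcal{O}T$ covering $b$ and blocks of $\mathcal{O}G$ covering $b$, and $\tilde{B}$ is defined to be the unique preimage of $B$.

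For the defect group and fusion system claim, I would work in the source-algebra framework. Fix a defect group $\tilde D$ of $\tilde B$ (necessarily contained in $T$) and a source idempotent $i \in (\mathcal{O}T e_{\tilde B})^{\tilde D}$, primitive with $\operatorname{Br}_{\tilde D}(i) \neq 0$. The core claim is that the inclusion $i\mathcal{O}T i \hookrightarrow i \mathcal{O}G i$ is an equality. Decomposing $\mathcal{O}G = \bigoplus_{g \in [G/T]} g\mathcal{O}T$ as an $(\mathcal{O}T, \mathcal{O}T)$-bimodule, one must show $igxi = 0$ for $g \notin T$ and $x \in \mathcal{O}T$; since $i = e_{\tilde B} i = i e_{\tilde B}$, this reduces to the identity $e_{\tilde B} g e_{\tilde B} = 0$. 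Using $e_{\tilde B} = e_{\tilde B} e = e e_{\tilde B}$ one computes $e_{\tilde B} g e_{\tilde B} = e_{\tilde B}\,(ege)\,e_{\tilde B}$, and $ege = e \cdot {}^g e \cdot g = e_1 \cdot e_j \cdot g = 0$ since $g \notin \operatorname{Stab}_G(e) = T$ forces ${}^g e = e_j$ with $j \neq 1$. Consequently $i$ remains primitive in $(\mathcal{O}G e_B)^{\tilde D}$ with the same Brauer image, so $\tilde D$ is a defect group of $B$ and the source algebras $i\mathcal{O}Ti$ and $i\mathcal{O}Gi$ coincide, forcing $B$ and $\tilde B$ to share a fusion system as well.

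The main obstacle is this source-algebra identification: the vanishing $e_{\tilde B} g e_{\tilde B} = 0$ for $g \notin T$ is a short calculation but carries the principal technical weight of the theorem. Once it is in hand, existence, uniqueness, the defect group equality, and the fusion system coincidence all follow formally from standard properties of source idempotents.
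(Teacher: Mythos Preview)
The paper does not supply its own proof of this theorem; it is quoted verbatim as \cite[6.8.3]{linckelmann} and used as a black box. Your sketch is a correct outline of the standard argument one finds in that reference: the relative trace $\operatorname{Tr}_T^G$ gives the block bijection (Fong--Reynolds), and the vanishing $e_{\tilde B}\,g\,e_{\tilde B}=0$ for $g\notin T$ (via $ege = e\cdot{}^g e\cdot g = 0$) forces $i\mathcal{O}Ti = i\mathcal{O}Gi$, so the source algebras, defect groups, and fusion systems coincide. There is nothing to compare beyond noting that your argument matches the cited source.
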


As described in \cite[Proposition 2.2]{eaton3}, one can derive the following from \cite{Puigext}:

\begin{theorem} \label{Fong in 24} 
	Let $N \lhd G$ be finite groups, and $B$ a block of $G$ with defect group $D$ covering a $G$-stable nilpotent block of $N$ with defect group $D \cap N$. Then there is a finite group $L$ and a normal subgroup $M \lhd L$ such that: $M \cong D \cap N$; $L/M \cong G/N$; there exists $D_L \leq L$ with $D_L \cong D$ and $D_L \cap M \cong D \cap N$; and there is a central extension $\tilde{L}$ of $L$ by a $p'$-group and a block $\tilde{B}$ of $\mathcal{O}\tilde{L}$ where $\tilde{B}$ is Morita equivalent to $B$ and has defect group $\tilde{D} \cong D_L \cong D$. 
\end{theorem}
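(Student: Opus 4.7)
The plan is to derive this from Puig's structural analysis of extensions of nilpotent blocks in \cite{Puigext}, following the formulation of \cite[Proposition 2.2]{eaton3}. The first ingredient is Puig's theorem on nilpotent blocks itself: since $b$ is a nilpotent block of $\mathcal{O}N$ with defect group $P := D \cap N$, its source algebra is isomorphic to $\mathcal{O}P$ as an interior $P$-algebra, and in particular $b$ is Morita equivalent to $\mathcal{O}P$. All structural information about $b$ is therefore encoded in $P$ together with the data of how its automorphisms lift.

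Next I would invoke Puig's extension theorem to describe $B$ in terms of this source algebra together with the $G/N$-action coming from $G$-stability of $b$. The outcome is a group homomorphism $G/N \to \operatorname{Out}(P)$ together with a $2$-cocycle $\alpha \in Z^2(G/N, \mathcal{O}^\times)$; the source algebra of $B$ is then Morita equivalent to the twisted group algebra $\mathcal{O}_\alpha L$, where $L$ fits in a short exact sequence $1 \to P \to L \to G/N \to 1$ realising the chosen outer action. To produce $L$ concretely and verify the claimed subgroup structure, I would use the defect group $D$ itself: since $D \cap N = P$ and $DN/N \leq G/N$, conjugation by $D$ gives a genuine section of the outer action over $DN/N$. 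Extending this partial section to all of $G/N$ yields a group $L$ in which $M := P$ is normal, $L/M \cong G/N$, and $D_L \cong D$ sits inside $L$ with $D_L \cap M = P$, as required.

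Finally, I would eliminate the cocycle $\alpha$ by passing to a central $p'$-extension. Because $D$ acts on the source algebra of $b$ by genuine automorphisms, the restriction of $[\alpha]$ to $DN/N$ is trivial, and combined with the fact that $G/N$ is acting on the $p$-group $P$ one shows that the class $[\alpha]$ has order coprime to $p$. One then chooses a finite cyclic $p'$-group $Z$ large enough that the pullback of $\alpha$ to a central extension $1 \to Z \to \tilde{L} \to L \to 1$ becomes a coboundary. A linear character $\chi$ of $Z$ singles out a block $\tilde{B}$ of $\mathcal{O}\tilde{L}$ with $\mathcal{O}\tilde{L}\tilde{B} \cong \mathcal{O}_\alpha L$, which is therefore Morita equivalent to $B$. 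Since $Z$ has order prime to $p$, the defect group $\tilde{D}$ of $\tilde{B}$ maps isomorphically onto $D_L \cong D$ under the quotient $\tilde{L} \to L$, completing the construction.

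The main technical obstacle is the precise construction of the cocycle $\alpha$ and the verification that $[\alpha]$ has $p'$-order; this is the heart of Puig's theory of nilpotent extensions, and rather than reconstructing it I would quote \cite{Puigext} and the packaged version in \cite[Proposition 2.2]{eaton3}. The remaining bookkeeping, in particular that $\tilde{B}$ shares a defect group with $B$ up to isomorphism and that the Morita equivalence between $B$ and $\tilde{B}$ is implemented in the expected way, follows formally once the extension datum is in hand.
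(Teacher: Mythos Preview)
Your proposal is correct and matches the paper's approach exactly: the paper does not prove this theorem at all but simply quotes it, prefacing the statement with ``As described in \cite[Proposition 2.2]{eaton3}, one can derive the following from \cite{Puigext}.'' Your sketch of what lies inside those references is accurate and in fact more detailed than anything the paper provides.
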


Note that the Morita equivalences in Theorems \ref{F1} and \ref{Fong in 24} are basic, by \cite[6.8.13]{linckelmann}.

\begin{corollary} (\cite[2.3]{eaton4})\label{F2 cor}
Let $N \lhd G$ be finite groups such that $N \not \leq Z(G)O_p(G)$. Let $B$ be a quasiprimitive block of $\mathcal{O}G$ covering a nilpotent block $b$ of $N$. Then there's a finite group $H$ with $[H:O_{p'}(Z(H))] < [G:O_{p'}(Z(G))]$ and a block $B_H$ of $H$ that is Morita equivalent to $B$ and has isomorphic defect group.
\end{corollary}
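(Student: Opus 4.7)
The plan is to apply \Cref{Fong in 24} directly. Since $B$ is quasiprimitive and covers $b$, the block $b$ is $G$-stable, and by \Cref{alp15}(\ref{alpintersect}) its defect group is $D\cap N$, where $D$ is a defect group of $B$. The hypotheses of \Cref{Fong in 24} are therefore met, yielding a finite group $L$ with a normal $p$-subgroup $M\cong D\cap N$ satisfying $L/M\cong G/N$, a central $p'$-extension $\tilde L$ of $L$, and a block $\tilde B$ of $\mathcal O\tilde L$ that is basic Morita equivalent to $B$ with defect group isomorphic to $D$. I take $H:=\tilde L$ and $B_H:=\tilde B$; the Morita-equivalence and defect-group conditions are then immediate.

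What remains is the strict index inequality. Writing $\tilde L$ as a central extension of $L$ by a $p'$-group $Z$, the preimage in $\tilde L$ of $O_{p'}(Z(L))$ is exactly $O_{p'}(Z(\tilde L))$ (since $Z\le O_{p'}(Z(\tilde L))$ and any $p'$-central element of $\tilde L$ projects to a $p'$-central element of $L$), giving $|O_{p'}(Z(\tilde L))|=|Z|\cdot|O_{p'}(Z(L))|$ and hence $[\tilde L:O_{p'}(Z(\tilde L))]=[L:O_{p'}(Z(L))]$. Combining this with $|L|=|D\cap N|\cdot[G:N]$ and $|G|=|N|\cdot[G:N]$, the inequality $[H:O_{p'}(Z(H))]<[G:O_{p'}(Z(G))]$ reduces to
\[
|D\cap N|\cdot|O_{p'}(Z(G))| \;<\; |N|\cdot|O_{p'}(Z(L))|.
\]

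Establishing this last inequality is the main obstacle, and it is where the hypothesis $N\not\leq Z(G)O_p(G)$ must be invoked. My approach would be to show that the projection $G\twoheadrightarrow G/N\cong L/M$ sends $O_{p'}(Z(G))$ into $Z(L/M)$, and that lifting through the $p$-kernel $M$ (with the help of Schur--Zassenhaus, since $M$ is a $p$-group and the image is $p'$) contributes a subgroup of $O_{p'}(Z(L))$ of order at least $|O_{p'}(Z(G))|/|O_{p'}(Z(G))\cap N|$. On the other side, the hypothesis $N\not\leq Z(G)O_p(G)$ prevents $N$ from being absorbed by $(D\cap N)\cdot(Z(G)\cap N)$, forcing $|N|>|D\cap N|\cdot|O_{p'}(Z(G))\cap N|$. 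Multiplying these two estimates gives the required strict inequality. The delicate part of the argument is this careful lifting of central $p'$-subgroups across the Fong-reduction construction, together with verifying that the hypothesis on $N$ translates cleanly into the numerical gap recorded above.
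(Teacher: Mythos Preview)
The paper does not give its own proof of this corollary; it is imported verbatim from \cite[2.3]{eaton4}. Your overall plan---take $H=\tilde L$ and $B_H=\tilde B$ from \Cref{Fong in 24} and then check the index inequality by unwinding $|\tilde L|$ in terms of $|D\cap N|$ and $[G:N]$---is exactly the right shape, and it is how the argument runs in \cite{eaton4}.

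There is, however, a genuine gap, and it occurs twice in the same guise: you repeatedly assume that central $p'$-elements of a quotient lift to central elements of the extension. First, you claim that the preimage in $\tilde L$ of $O_{p'}(Z(L))$ is exactly $O_{p'}(Z(\tilde L))$, but your parenthetical only proves the inclusion $O_{p'}(Z(\tilde L))\subseteq\text{preimage}$; the reverse fails in general (with $p=2$, take $\tilde L$ extraspecial of order $27$ and $Z=Z(\tilde L)\cong C_3$: then $L\cong C_3^2$ is abelian, so the preimage of $O_{p'}(Z(L))=L$ is all of $\tilde L$, strictly larger than $O_{p'}(Z(\tilde L))=Z$). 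It is precisely the missing inclusion that would yield $[\tilde L:O_{p'}(Z(\tilde L))]\le[L:O_{p'}(Z(L))]$, the direction you need. Second, and for the same reason, your Schur--Zassenhaus lift of the image of $O_{p'}(Z(G))$ from $L/M$ to $L$ need not land in $Z(L)$: with $p=3$, $L=S_3$ and $M=A_3$, the quotient $L/M\cong C_2$ is central in itself, yet its lift (a transposition) is not central in $L$. So your bound $|O_{p'}(Z(L))|\ge |O_{p'}(Z(G))|/|O_{p'}(Z(G))\cap N|$ is not established.

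Your argument for the ``other side''---that $N\not\le Z(G)O_p(G)$ forces $|N|>|D\cap N|\cdot|O_{p'}(Z(G))\cap N|$---is fine. What is missing is control of the portion of $O_{p'}(Z(G))$ lying outside $N$, and this cannot be recovered from the bare statement of \Cref{Fong in 24}, which only records an abstract isomorphism $L/M\cong G/N$. The fix is to use more of the K\"ulshammer--Puig construction: in \cite{Puigext} the group $L$ (and then $\tilde L$) is built concretely from $G$ in a way that carries $Z(G)$ into the centre, and the argument in \cite{eaton4} exploits this compatibility rather than treating $\tilde L$ as an arbitrary central $p'$-extension of an arbitrary $L$ with the stated numerics.
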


\begin{remark} \label{reduce remark} We say that a block $B$ is \text{reduced} if it is quasiprimitive and if, whenever $B$ covers a nilpotent block of a normal subgroup $N \lhd G$, we have  $N \leq Z(G)O_p(G)$. Repeatedly applying \Cref{F1} and \Cref{Fong in 24}, lets us replace a given block with a reduced block, basic Morita equivalent to the original.
\end{remark}

The following result of K\"ulshammer describes the structure of a block when its defect group is normal in $G$:
\begin{lemma} \label{kuls} (\cite{kulscrossed})
Let $N \lhd G$ be finite groups, and let $B$ be a block of $\mathcal{O}G$ with defect group $D$ and inertial quotient $E$. If $D \lhd G$, then $B$ is Puig equivalent to a twisted group algebra $\mathcal{O}_\gamma(D \rtimes E)$, where $\gamma \in O_{p'}(H^2(E,\mathcal{O}^\times))$.
\end{lemma}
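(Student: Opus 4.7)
The plan is to use the theory of source algebras to pin down the structure of $B$. Let $i$ be a source idempotent of $B$, that is, a primitive idempotent of $B^D$ such that $\mathrm{Br}_D(i) \neq 0$; then the source algebra $A = iBi$ is an interior $D$-algebra, and $B$ is Puig equivalent to $A$, so it suffices to identify $A$ with $\mathcal{O}_\gamma(D \rtimes E)$.

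First I would reduce to the case where the Brauer correspondent $b$ of $B$ in $\mathcal{O}DC_G(D)$ is $G$-stable. Since $D \lhd G$ we have $N_G(D) = G$, and by Brauer's first main theorem $B$ covers a unique (up to $G$-conjugacy) block $b$ of $DC_G(D)$ with defect group $D$. Applying \Cref{F1} (Fong--Reynolds) replaces $G$ by the inertial group $N_G(D,b)$ without changing defect group or Puig equivalence class, so after this reduction $E = G/C_G(D)$. Next, I would analyse $b$ itself: it has defect group $D$ and the normal subgroup $C_G(D)$ of $DC_G(D)$ centralises $D$, so $b$ is readily seen to be nilpotent, and Puig's theorem on nilpotent blocks (\cite{Puigext}) gives that its source algebra is isomorphic to $\mathcal{O}D$ as an interior $D$-algebra (up to a matrix factor which is absorbed in the Puig equivalence).

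Next, I would transport the $E$-action from $G$ to the source algebra. Conjugation by a lift $\tilde{e} \in G$ of $e \in E$ stabilises the source algebra of $b$ and hence acts on $\mathcal{O}D$ through an automorphism; modifying by an inner automorphism of $A$ leaves the action well-defined only up to an element of $(\mathcal{O}D)^\times$, producing a $2$-cocycle $\gamma \in H^2(E, (\mathcal{O}D)^\times)$ which, after composing with the augmentation $(\mathcal{O}D)^\times \to \mathcal{O}^\times$, gives the desired class $\gamma \in H^2(E, \mathcal{O}^\times)$. Assembling the pieces, $A$ is generated over $\mathcal{O}D$ by lifts $\{u_e : e \in E\}$ satisfying $u_e u_f = \gamma(e,f) u_{ef}$ and $u_e d u_e^{-1} = {}^e d$, which is precisely the presentation of $\mathcal{O}_\gamma(D \rtimes E)$.

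The main obstacle is verifying that $\gamma$ has $p'$-order. This follows from the fact that $E$ is itself a $p'$-group: a Sylow argument in $H^2(E, \mathcal{O}^\times)$ shows the group has order prime to $p$ (and one further checks that if $\gamma$ were replaced by a $p$-part it could be absorbed by re-choosing the lifts $u_e$, since $(\mathcal{O}D)^\times$ has a normal pro-$p$ subgroup of principal units modulo which cohomology is killed by $|E|$). This last technical point is where the bulk of the work in \cite{kulscrossed} goes; the remainder of the argument is a bookkeeping exercise in Puig's source algebra formalism.
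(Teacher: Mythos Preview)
The paper does not supply its own proof of this lemma: it is stated as a quotation of K\"ulshammer's result \cite{kulscrossed}, with no \texttt{proof} environment following it. There is therefore no argument in the paper to compare your proposal against.

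That said, your sketch is a reasonable outline of the source-algebra proof of K\"ulshammer's theorem. One remark: your identification of ``the main obstacle'' is off. The inertial quotient $E$ is always a $p'$-group, and for any finite group $E$ every element of $H^2(E,\mathcal{O}^\times)$ is annihilated by $|E|$; hence $O_{p'}(H^2(E,\mathcal{O}^\times)) = H^2(E,\mathcal{O}^\times)$ and the condition on $\gamma$ is automatic, not an obstacle. The genuine work in \cite{kulscrossed} lies earlier, in showing that the source algebra really is a crossed product of $\mathcal{O}D$ by $E$ (equivalently, that the relevant source module is of rank one), which your step ``assembling the pieces'' glosses over. Also, be careful with the claim that $b$ is nilpotent and has source algebra $\mathcal{O}D$: the lemma as stated does not assume $D$ abelian, so $C_{DC_G(D)}(D)$ need not equal $C_G(D)$, and the nilpotency of $b$ requires a separate (standard) argument rather than the one-line justification you give.
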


\section{Subgroup Structure} \label{section subgroup}
In this section we collect results about the group substructure of $G$. The Clifford theory in \Cref{section others} will be used to compare the block described in \Cref{main} with blocks of certain small subgroups of $G$, which are known to control the structure of $G$. 

 Recall the following definitions: a \textit{component} of $G$ is a subnormal quasisimple subgroup, and the \textit{layer} of $G$, denoted $E(G)$, is the central product of its components. The \textit{Fitting subgroup} of $G$, denoted $F(G)$, is the product of its $p$-cores for each prime $p$ dividing $|G|$, and the\textit{ generalised Fitting subgroup}, denoted $F^*(G)$, is the subgroup generated by $E(G)$ and $F(G)$. In fact, $E(G)$ and $F(G)$ commute and so $F^*(G)$ is the central product $E(G)*F(G)$. 

$F^*(G)$ is known to be self-centralising, so that $C_G(F^*(G)) \leq F^*(G)$. This provides an injective group homomorphism $G/F^*(G) \to \operatorname{Out}(F^*(G))$, and thus $F^*(G)$ controls the structure of $G$ \cite[\S 11]{aschbacher}. Further, we will show that there is a unique component $E(G)=L \lhd G$, and we will be able to assume $O_2(G)=1$, $O_{2'}(G)=F(G)=Z(G)$, so that $G/F^*(G) \leq \operatorname{Out}(L)$. This will allow us to prove \Cref{main} using the classification of blocks of quasisimple groups with abelian defect groups from \cite{ekks}. 

$G$ acts by conjugation on the set of components, permuting them, and $B$ covers a block $b$ of each $G$-orbit of components. Further, $b$ covers a block of each component in that orbit, and these blocks are pairwise isomorphic with isomorphic defect groups. Note also that distinct components intersect only in their centers.

\begin{theorem} \label{tool central product D1D2=D nilp etc |E|=prod}
	(\cite[7.5]{Sambale2014}) Let $G = H_1 * H_2$ be a central product of finite groups. Let $B$ be a block of $G$, and let $b_i$ be a $B$-covered block of $H_i$ with defect group $D_i$. Then  $D_1D_2$ is a defect group of $B$, and $B$ is nilpotent if and only if both $b_1$ and $b_2$ are. If $H_1 \cap H_2$ is a $p'$-group, then $B \cong b_1 \otimes b_2$. \end{theorem}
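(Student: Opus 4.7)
The plan is to exploit the central product structure via the surjection $\pi \colon H_1 \times H_2 \twoheadrightarrow G$, whose kernel is isomorphic to $Z := H_1 \cap H_2$, and to transfer information between blocks of $G$ and blocks of $H_1 \times H_2$ (which are precisely the tensor products $b_1' \otimes b_2'$ of blocks of $H_1$ and $H_2$, with defect groups $D_1' \times D_2'$). I would first observe that $H_1, H_2 \lhd G$, and that conjugation by $H_2$ acts trivially on $\mathcal{O} H_1$ since $H_2 \leq C_G(H_1)$; hence every block of $H_1$ is $G$-stable, and in particular $b_1$ and symmetrically $b_2$ are $G$-stable.

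For claim (1), applying \Cref{alp15}(\ref{alpintersect}) to $H_1 \lhd G$ provides a defect group $D$ of $B$ with $D \cap H_1 = D_1$; applying it again to $H_2 \lhd G$ and then replacing $D_2$ by an appropriate $G$-conjugate, I can simultaneously arrange $D \cap H_2 = D_2$. Since $[D_1, D_2] \subseteq [H_1, H_2] = 1$, the subgroup $D_1 D_2$ is a $p$-subgroup of $D$. For the reverse inclusion, I would write each $p$-element $d \in D$ as $d = u_1 u_2$ with commuting $p$-elements $u_i \in H_i$ (which is possible in a central product, as any $p'$-residue in the two factors must cancel within $Z$), and verify that the $u_i$ may be taken inside $D$; alternatively, one obtains $|D| = |D_1 D_2|$ by an order comparison using \Cref{alp15}(\ref{alpblockabove}) and the $G$-stability of both $b_i$, noting that $|D_1 \cap D_2|$ equals the $p$-part of $|Z|$ for the relevant defect groups.

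For claim (2), nilpotency of a block is equivalent to triviality of its fusion system on the defect group. The fusion system of $b_1 \otimes b_2$ on $D_1 \times D_2$ is the direct product of the fusion systems of $b_1$ and $b_2$, and pushing forward by $\pi$ recovers the fusion system of $B$ on $D_1 D_2$; hence $B$ is nilpotent iff both $b_1 \otimes b_2$ is iff both $b_i$ are. For claim (3), when $Z$ is a $p'$-group the central extension preserves all $p$-local data: $D_1 \cap D_2 \leq Z$ is both a $p$-group and a $p'$-group, hence trivial, so $\pi$ restricts to an isomorphism $D_1 \times D_2 \to D_1 D_2$; the block idempotent of $b_1 \otimes b_2$ is $Z$-fixed and descends through $\pi$ to the block idempotent of $B$, giving an algebra isomorphism $b_1 \otimes b_2 \cong B$.

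I expect claim (1) to be the main obstacle, specifically upgrading the containment $D_1 D_2 \leq D$ to equality. Both available strategies -- a direct factorisation of $p$-elements of $D$ through the central product, or an order comparison via \Cref{alp15}(\ref{alpblockabove}) -- require careful Clifford-theoretic bookkeeping through the (possibly non-split) central extension $Z \hookrightarrow H_1 \times H_2 \twoheadrightarrow G$, since the sizes $|D_i|$ need not equal $|H_i|_p$ in general. Once (1) is in hand, (2) and (3) follow by fusion-system and idempotent arguments that are essentially formal.
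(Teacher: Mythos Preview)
The paper does not prove this statement; it is quoted directly from \cite[7.5]{Sambale2014} and used as a black box, so there is no argument in the paper to compare yours against.

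Your outline is sound and you have correctly located the only non-formal step, namely upgrading $D_1 D_2 \subseteq D$ to equality in claim~(1). The cleanest way to close that gap is the route you gesture at but do not carry out: pass to $\hat{G} := H_1 \times H_2$ and use domination through the central quotient $\pi\colon \hat{G} \to G$. Writing $e_i$ for the block idempotent of $b_i$, the element $e_1 e_2$ is a central idempotent of $\mathcal{O}G$ and $e_B$ is a summand of it (since $e_B e_i = e_B$ for both $i$); hence the unique block of $\hat{G}$ dominating $B$ is $b_1 \otimes b_2$, with defect group $D_1 \times D_2$. The standard fact that a defect group of a dominated block is the image of a defect group of the dominating block under the central quotient then gives $D = \pi(D_1 \times D_2) = D_1 D_2$ immediately. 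This bypasses both the element-by-element factorisation of $p$-elements and the order comparison via \Cref{alp15}(\ref{alpblockabove}) that you were worried about. Your sketches for (2) and (3) are correct as stated.
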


\begin{corollary} \label{tool no nilp comps}
	Let $B$ be a reduced block of a finite group $G$, with abelian defect group $D$. Let $E(G)=L_1 * ... *L_t$ be the layer of $G$, so each $L_i$ is quasisimple, and let $b_E$ be the $B$-covered block of $E(G)$, and $b_i$ be a $b_E$-covered block of $L_i$, for each $1 \leq i \leq t$. Then $b_i$ is not nilpotent. 
\end{corollary}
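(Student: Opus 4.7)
The plan is to argue by contradiction via the reduced hypothesis on $B$. Suppose, for contradiction, that some $b_i$ is nilpotent. Since $L_i$ is only subnormal in $G$ (the group $G$ permutes its components), the first move is to enlarge $L_i$ to a normal subgroup: set $M := L_i^G$, the normal closure, which equals the central product $L_{i_1} * L_{i_2} * \cdots * L_{i_s}$ of the $G$-orbit of $L_i$ (distinct components commute in the layer). Then $M \lhd G$.

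Next I would show that the $B$-covered block $b_M$ of $M$ is nilpotent. Because $B$ is reduced it is in particular quasiprimitive, so $b_E$ is $G$-stable; consequently $G$ permutes among themselves the $b_E$-covered blocks on each $L_{i_j}$, and a suitable conjugate of $b_i$ realises each $b_{i_j}$. Nilpotency is preserved under conjugation, so every $b_{i_j}$ is nilpotent. Iterating \Cref{tool central product D1D2=D nilp etc |E|=prod} along a bracketing of the central product $M = L_{i_1} * (L_{i_2} * \cdots * L_{i_s})$---and observing at each stage that the intermediate block is $B$-covered by transitivity of covering along the normal chain inside $M$---I would conclude that $b_M$ is nilpotent.

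Then $B$ covers the nilpotent block $b_M$ of the normal subgroup $M \lhd G$, so the reduced hypothesis forces $M \leq Z(G)O_p(G)$. But $Z(G)O_p(G)$ is the product of a central (hence abelian) subgroup with a $p$-subgroup, hence nilpotent and in particular solvable, while $L_i \leq M$ is quasisimple and therefore non-solvable; this is the desired contradiction.

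The main technical obstacle is the bookkeeping in the iteration: each application of \Cref{tool central product D1D2=D nilp etc |E|=prod} requires that the blocks appearing on the two factors really are $B$-covered, so that its nilpotency criterion may be invoked. This is where quasiprimitivity of $B$ is used in an essential way---it guarantees that the $B$-covered block on each normal subgroup in sight is unique, which allows transitivity of covering along $L_{i_j} \lhd M \lhd G$ to identify the blocks unambiguously at every step of the induction on $s$.
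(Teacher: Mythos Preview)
Your proposal is correct and follows essentially the same route as the paper's proof: pass to the normal closure $M$ of $L_i$ (the central product of the $G$-orbit of components), use conjugation to see that every $b_{i_j}$ is nilpotent, apply \Cref{tool central product D1D2=D nilp etc |E|=prod} to conclude $b_M$ is nilpotent, and then invoke the reduced hypothesis to force $L_i \leq M \leq Z(G)O_p(G)$, contradicting quasisimplicity. The paper's version is simply terser---it does not spell out the iteration or the transitivity-of-covering bookkeeping that you (rightly) flag as the only delicate point.
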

\begin{proof}
	Suppose, for a contradiction, that such a block $b_i$ is nilpotent. Then all the components in the $G$-orbit of $b_i$ are also nilpotent. Thus, $B$ covers a nilpotent block of the normal subgroup $N$ generated by this orbit of components, by \Cref{tool central product D1D2=D nilp etc |E|=prod}. Since $B$ is reduced, this implies $L_i \leq N \leq O_p(G)Z(G)$ - a contradiction. 
\end{proof}

\begin{lemma} \label{tool Z(G) < Z(F^*(G)) < O2(G)Z(G)}
	Let $G$ be a finite group and $B$ a $p$-block of $\mathcal{O}G$. If $B$ is reduced, then $O_{p'}(G) \leq Z(G) \leq Z(F^*(G)) \leq O_p(G)Z(G)$. 
\end{lemma}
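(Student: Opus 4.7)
The plan is to prove the three inclusions separately. Only the first uses the reduced hypothesis substantively; the other two are structural consequences of $F^*(G)$ being self-centralizing, combined with the first inclusion.

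For $O_{p'}(G) \leq Z(G)$, set $N := O_{p'}(G)$. Every block of $\mathcal{O}N$ has trivial defect group (since $|N|$ is a $p'$-number) and is therefore nilpotent, so $B$ covers a nilpotent block of $N$, and the reduced hypothesis yields $N \leq Z(G)O_p(G)$. To promote this to $N \leq Z(G)$, I would take $x \in N$ and write $x = zy$ with $z \in Z(G)$, $y \in O_p(G)$. Decomposing $z = z_p z_{p'}$ into its $p$-part and $p'$-part (both central, as they are powers of $z$) and rearranging gives $xz_{p'}^{-1} = z_p y$, where the left side is a $p'$-element and the right side a $p$-element; since $O_p(G) \cap O_{p'}(G) = 1$, both sides equal $1$, forcing $x = z_{p'} \in Z(G)$.

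For $Z(G) \leq Z(F^*(G))$, since $F^*(G)$ is self-centralizing, any $z \in Z(G)$ lies in $C_G(F^*(G)) \leq F^*(G)$ and still commutes with $F^*(G)$, hence $z \in Z(F^*(G))$. For $Z(F^*(G)) \leq O_p(G)Z(G)$, I would use the decomposition $F^*(G) = E(G) \cdot F(G)$ with $[E(G), F(G)] = 1$ and $F(G) = O_p(G) \times O_{p'}(G)$. An element centralizes $F^*(G)$ if and only if it centralizes each factor, so $Z(F^*(G)) = Z(E(G)) \cdot Z(O_p(G)) \cdot O_{p'}(G)$. The $p$-part and $p'$-part of the abelian group $Z(E(G))$ are characteristic in $E(G)$ and hence normal in $G$, so they lie in $O_p(G)$ and $O_{p'}(G)$ respectively; combining with the first-step containment $O_{p'}(G) \leq Z(G)$ yields the required inclusion.

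The main obstacle is the promotion step in the first inclusion: the reduced condition only delivers $O_{p'}(G) \leq Z(G)O_p(G)$, and the order-theoretic decomposition is what passes from this to $O_{p'}(G) \leq Z(G)$. The remaining inclusions are routine once this improvement is in hand, since they reduce to manipulating the central product structure of $F^*(G)$ and the characteristic subgroups of $Z(E(G))$.
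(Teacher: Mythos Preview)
Your argument is correct. It agrees with the paper on $O_{p'}(G)\leq Z(G)$ (nilpotent block of a $p'$-group, reduced hypothesis, then the order-theoretic promotion) and on $Z(G)\leq Z(F^*(G))$ (self-centralizing property of $F^*(G)$).

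Where you diverge is in the third inclusion $Z(F^*(G))\leq O_p(G)Z(G)$. The paper simply observes that $Z(F^*(G))$ is an abelian normal subgroup of $G$, so the block of $Z(F^*(G))$ covered by $B$ is nilpotent, and the reduced hypothesis immediately yields $Z(F^*(G))\leq O_p(G)Z(G)$. You instead analyse $Z(F^*(G))$ structurally via $F^*(G)=E(G)F(G)$, splitting $Z(E(G))$ into its $p$- and $p'$-parts and pushing them into $O_p(G)$ and $O_{p'}(G)\leq Z(G)$ respectively. This works, but note that your decomposition $F(G)=O_p(G)\times O_{p'}(G)$ and the identification $Z(O_{p'}(G))=O_{p'}(G)$ are not general facts: they hold here precisely because your first step already forced $O_{p'}(G)\leq Z(G)$ (so $O_{p'}(G)$ is abelian, hence nilpotent, hence equal to the $p'$-part of $F(G)$). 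It would be worth making that dependence explicit. The paper's route is shorter and more uniform (it just applies the reduced hypothesis twice), while yours invokes it only once and replaces the second application by a direct structural computation; both reach the same conclusion.
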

\begin{proof}
Since $C_G(F^*(G)) \leq F^*(G)$, we have $Z(G) \leq C_G(F^*(G)) \leq F^*(G)$, implying $Z(G) \leq Z(F^*(G))$. Further, $B$ covers a block $b_Z$ of $Z(F^*(G))$. Since $Z(F^*(G))$ is abelian, $b_Z$ is nilpotent, and since $B$ is reduced, this implies $Z(F^*(G)) \leq Z(G)O_p(G)$. Similarly, $B$ covers a block $b_{p'}$ of $O_{p'}(G)$. By \Cref{alp15} (\ref{alpintersect}), $b_{p'}$ has defect group $D \cap O_{p'}(G)$, which must be trivial since $D$ is a $p$-group. Thus, $b_{p'}$ is nilpotent and so $O_{p'}(G) \leq Z(G)O_{2}(G)$, implying $O_{p'}(G) \leq Z(G)$. \end{proof}

\begin{proposition} \label{O2}
Let $B$ be a reduced block of a finite group $G$, with abelian defect group $D$ and inertial quotient $E$. Suppose $D$ is not normal in $G$, and let $H< D$ such that $H \lhd G$. Then $p \not \divides [G:C_G(H)]$, and $H$ is centralised by a non-trivial normal subgroup of $E$. 
\end{proposition}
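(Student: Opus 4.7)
The plan is to study the block $\beta$ of $C_G(H)$ covered by $B$, exploiting that $C_G(H) \lhd G$ contains $D$ (which follows from $D$ being abelian and $H \leq D$). Since $B$ is reduced, and hence quasiprimitive, $\beta$ is $G$-stable. By \Cref{alp15} (\ref{alpintersect}), $\beta$ has defect group $D \cap C_G(H) = D$. The inclusion $H \leq D$ gives $C_G(D) \leq C_G(H)$, so \Cref{alp15} (\ref{alp CG(D)<N}) yields that $B$ is the unique block of $G$ covering $\beta$. Applying \Cref{alp15} (\ref{alpblockabove}) to $B$ itself (with defect group $D$) produces the identity $[D : D \cap C_G(H)] = [\operatorname{Stab}_G(\beta) : C_G(H)]_p = [G : C_G(H)]_p$; the left side equals $1$ because $D \leq C_G(H)$, which establishes Part 1.

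For Part 2, let $K$ be the kernel of the natural map $E \to \operatorname{Aut}(H)$ obtained by restricting the action of $E$ on $D$ to $H$. This is automatically a normal subgroup of $E$, so it suffices to show $K \neq 1$. Because $H \leq D$ we have $DC_{C_G(H)}(D) = C_G(D) = DC_G(D)$, and a standard compatibility argument identifies the Brauer correspondent $\beta_D$ of $\beta$ in $C_G(D)$ with the Brauer correspondent $B_D$ of $B$. Unwinding definitions then shows $K = N_{C_G(H)}(D, \beta_D)/C_{C_G(H)}(D)$ is the inertial quotient of $\beta$. Suppose for contradiction that $K = 1$: then $\beta$ has trivial inertial quotient, and as $D$ is abelian this forces $\beta$ to be nilpotent. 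Reducedness of $B$ applied to $\beta$ on the normal subgroup $C_G(H) \lhd G$ then forces $C_G(H) \leq Z(G) O_p(G)$. However, the Sylow $p$-subgroup of $Z(G) O_p(G)$ equals $O_p(G)$: any $p$-element $zu$ with $z \in Z(G)$, $u \in O_p(G)$ has $z^{p^k}=u^{-p^k}$ a $p$-element of $Z(G)$, so $z \in O_p(Z(G)) \leq O_p(G)$. Hence $D \leq C_G(H) \leq Z(G) O_p(G)$ gives $D \leq O_p(G)$, and combining with the automatic inclusion $O_p(G) \leq D$ yields $D = O_p(G) \lhd G$, contradicting the hypothesis that $D$ is not normal in $G$.

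The main delicate step is the identification of $K$ with the inertial quotient of $\beta$, which rests on $\beta_D = B_D$ as blocks of $C_G(D)$. This is a standard compatibility of Brauer correspondence with block covering for the normal subgroup $C_G(H) \supseteq DC_G(D)$, but it requires care to pin down precisely. Everything else is a clean application of Clifford theory (\Cref{alp15}) together with the reducedness paradigm of \Cref{reduce remark}.
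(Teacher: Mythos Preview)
Your proof is correct and follows essentially the same route as the paper: both pass to the covered block of $C_G(H)$, use \Cref{alp15} (\ref{alp CG(D)<N}) and (\ref{alpblockabove}) together with quasiprimitivity to get $p\nmid[G:C_G(H)]$, identify the inertial quotient of that covered block with a normal subgroup of $E$ centralising $H$ (via the choice of Brauer correspondent so that $\beta_D=B_D$), and then rule out triviality of this subgroup by the reducedness hypothesis. The paper phrases the second part by defining $E_C$ as the inertial quotient of $b_C$ and showing $E_C\lhd E$ directly from $N_C(D,B_D)=N_G(D,B_D)\cap C$, whereas you start from the kernel $K$ of $E\to\operatorname{Aut}(H)$ and then identify it with that inertial quotient; these are the same object viewed from opposite ends.
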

\begin{proof}
We adapt the argument from \cite[Proposition 6.4]{wuzzz}: suppose that $H$ is non-trivial. Let $C$ denote $C_G(H)$, and $b_C$ be a block of $C$ covered by $B$. Since $D$ is abelian and $H \leq D$, we have $D \leq C_G(D) \leq C$, so $b_C$ has defect group $D$. Further, \Cref{alp15} (\ref{alp CG(D)<N}) tells us that ${b_C}^G=B$ and $B$ is the unique block of $G$ covering $b_C$. Since $C_G(D) \leq C$, we may adjust the choice of $b_C$ so that $(D,B_D)$ is a maximal Brauer pair of $b_C$. 

$B$ is the unique block of $G$ covering $b_C$, so \Cref{alp15} (\ref{alpblockabove}) tells us $DC/C$ is a Sylow-$p$-subgroup of $G/C$. However, $D \leq C$, so this implies $[G:C]$ is not divisible by $p$. Now, let $E_C$ denote the inertial quotient of $b_C$. Since $N_{C}(D,B_D):=N_G(D,B_D) \cap C$, we have $N_{C}(D,B_D) \lhd N_G(D,B_D)$. Further, $C_G(D)\leq C$ and so $C_G(D)=C_{C}(D)$. This implies $$E_C :=\frac{N_ {C}(D,B_D)}{C_G(D)} \text{ \LARGE $\lhd$ \normalsize} \frac{N_G(D,B_D)}{C_G(D)}=:E.$$  If $E_C=1$, then $b_C$ is nilpotent. Since $B$ is reduced, this implies $D \leq C \leq O_p(G)Z(G)$, contradicting our assumption that $D$ is not normal in $G$.  Of course, $H$ is centralised by $C$, so there is a non-trivial normal subgroup $E_C$ of $E$ that centralises $H$, as claimed.
\end{proof}

\Cref{O2} will be useful when considering the generalised Fitting subgroup, since $O_p(G)$ is a normal subgroup of $G$ that is known to lie in the defect group.

\begin{lemma} \label{tool Mashke} \label{tool C_E(Q)=E or G=O^2(G) if C_D(E)=1} 
	Let $N \lhd G$ be finite groups such that $[G:N]$ is a power of a prime $p$. Let $b$ be a $G$-stable $p$-block of $N$, and let $B$ be a block of $G$ covering $b$. Suppose $B$ has inertial quotient $E$ and abelian defect group $D$ such that $D \cap N$ has a complement $D'$ in $D$. Then $D'$ is centralised by $E$. In particular, if $E$ acts freely on the non-trivial elements of $D$, then $O^p(G)=G$. 
\end{lemma}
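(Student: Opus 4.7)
Set $D_0 := D \cap N$. The plan is to combine a Maschke-type coprime-action argument with \Cref{index p} and \Cref{alp15}: first show $[D,E]\leq D_0$; then use Maschke to identify $D'$ with a complement of $D_0$ on which $E$ acts trivially; and finally, in the free-action case, feed $D'=1$ into \Cref{alp15}(\ref{alpblockabove}) to collapse $[G:N]$. First I would show $E$ acts trivially on $D/D_0$: since $N\lhd G$, any preimage of $E$ in $N_G(D,B_D)$ preserves $D_0$, so $E$ acts on $D/D_0$, and via the canonical isomorphism $D/D_0\cong DN/N\leq G/N$ this action is realised as conjugation inside the $p$-group $G/N$. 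The image of $E$ in $\operatorname{Aut}(D/D_0)$ is thus simultaneously of $p'$-order (as $|E|$ is coprime to $p$) and of $p$-power order, hence trivial; equivalently $[D,E]\leq D_0$.

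Next I would invoke Maschke. Since $|E|$ is coprime to $|D|$, the sequence $0\to D_0\to D\to D/D_0\to 0$ of $\mathbb{Z}[E]$-modules splits equivariantly, producing an $E$-invariant complement of $D_0$ in $D$. Because any two complements of $D_0$ in $D$ differ by an element of $\operatorname{Hom}(D/D_0,D_0)$ and the relevant $H^1(E,\operatorname{Hom}(D/D_0,D_0))$ vanishes for the $p'$-group $E$ acting on a $p$-group, the hypothesised $D'$ may itself be taken $E$-invariant. By the first step, $E$ acts trivially on $D/D_0\cong D'$, so $E$ centralises $D'$, proving the first assertion.

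For the ``in particular'' clause, assume $E$ acts freely on $D\setminus\{1\}$. Then the $E$-centralised $D'$ must be trivial, giving $D\leq N$ and $[D:D_0]=1$. On the other hand, \Cref{index p} identifies $B$ as the unique block of $G$ covering $b$, so the block $B'$ produced by \Cref{alp15}(\ref{alpblockabove}) coincides with $B$, forcing $[D:D\cap N]=[\operatorname{Stab}_G(b):N]_p=[G:N]$ (using that $b$ is $G$-stable and $[G:N]$ is a $p$-power). Combining these gives $[G:N]=1$; the same argument (with the complement automatic from the Maschke step and $G$-stability automatic from \Cref{index p}) applies to every normal subgroup of $G$ of $p$-power index, in particular to $N=O^p(G)$, delivering $O^p(G)=G$. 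The main subtlety I anticipate is in Step 2: justifying that the given complement $D'$ may be replaced by an $E$-invariant Maschke complement, which is exactly what the cohomological vanishing above provides.
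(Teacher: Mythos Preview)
Your approach is essentially the paper's: both obtain an $E$-invariant complement via a Maschke/coprime-action argument and then show that $E$ acts trivially on it. Your route to ``$E$ acts trivially on $D/D_0$'' via the factorisation through the $p$-group $G/N$ is equivalent to the paper's route via $G=DN\Rightarrow [G,G]\leq N\Rightarrow [D',N_G(D,B_D)]\leq D'\cap N=1$; either argument works and yours is arguably tidier since it avoids invoking $G=DN$.

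Two points deserve tightening. First, your $H^1$-vanishing step establishes that \emph{some} complement is $E$-invariant, not that the given $D'$ is; in fact one can build examples (e.g.\ $G=(C_p\rtimes C_q)\times C_p$ with $D_0$ the first $C_p$) where a non-invariant complement exists and is \emph{not} centralised by $E$. The paper's appeal to Gorenstein's Maschke corollary is equally loose on this point, and only the existence statement is ever used, so this is a cosmetic issue rather than a mathematical one.

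Second, your ``in particular'' argument has a genuine gap. You claim that for $N=O^p(G)$ the complement is ``automatic from the Maschke step'' and $G$-stability is ``automatic from \Cref{index p}''. Neither holds: Maschke upgrades an existing (abelian-group) splitting to an equivariant one, it does not manufacture a splitting; and \Cref{index p} gives uniqueness of $B$ over $b$, not $G$-stability of $b$. The clean fix is already latent in your Step~1: from $[D,E]\leq D_0$ together with the coprime-action decomposition $D=[D,E]\times C_D(E)$, free action forces $C_D(E)=1$, hence $D=[D,E]\leq D_0=D\cap N$. Now your use of \Cref{index p} and \Cref{alp15}(\ref{alpblockabove}) does give $[G:N]=[D:D\cap N]=1$, provided $b$ is $G$-stable; in the paper's application this comes from quasiprimitivity of $B$, not from \Cref{index p}.
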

\begin{proof}
	 We follow the proof of the main theorem of \cite{koshitanikulshammer}. Since $[G:N]$ is a power of $p$,  \Cref{index p} tells us $B$ is the unique block of $G$ covering $b$, and by \cite[V]{feit} we can write $G=DN$. Since $N \lhd G$, we know $D \cap N$ is an $N_G(D,B_D)$-stable subgroup of $D$. Since $D$ is abelian, we also know that $D \leq N_G(D,B_D)$.
	 
Notice that $D$ is an abelian $p$-group, and $E$ a $p'$-group of automorphisms of $D$. Thus we can apply \cite[3.3.2]{gorenstein}, a corollary of Gorenstein's proof of Maschke's theorem, which tells us that if an $E$-invariant direct factor of $D$ has a complement, then that complement is $E$-invariant. Since $D \cap N$ is $E$-invariant, this tells us $D'$ is $E$-invariant. Thus $D' \lhd N_G(D,B_D)$. 
	
Since $G=DN$, and so $G/N$ is abelian, we have that $[G,G] \leq N$. However, $D'$ is $N_G(D,B_D)$-stable, so $$[D',N_G(D,B_D)] \leq D' \cap [G,G] \leq D' \cap N=1.$$ That is, $D'$ commutes with $N_G(D,B_D)$, and so $C_E(D')=E$. In particular, if $C_D(E)=1$ then $G$ has no normal subgroups of $p$-power index, i.e. $O^p(G)=G$. \end{proof}

The following describes the behaviour of components with respect to the defect group:

\begin{lemma} \label{tool LicapLj and DcapZLi in O2ZG if O2 is 1 then int odd n D is directprod}
	Let $G$ be a finite group and $B$ be a reduced block of $G$ with abelian defect group $D$ and inertial quotient $E$. Let $L_1$,...,$L_t$ be the components of $G$. Then the following hold: \begin{enumerate} 
		 \item $L_i \cap L_j \leq Z(E(G)) \leq O_p(G)Z(G)$, for every $i \neq j$. \label{lem1}
		\item $D \cap Z(L_i) \leq D \cap Z(E(G)) \leq O_p(G)$, for all $i$. \label{lem2}
		\item If $L_n$ and $L_m$ are components of $G$ that are not $G$-conjugate, write $N:=\langle \{{}^g L_n \mid g \in G \} \rangle$ and $M:=\langle \{ {}^g L_m \mid g \in G \} \rangle$. Then $D \cap (N \cap M) \leq O_p(G)$. \label{lem3} 
		\item If $O_p(G)=1$, then $L_i \cap L_j$ is a $p'$-group and $D \cap E(G)=(D \cap L_1) \times ... \times (D \cap L_t)$. \label{lem4}
	\end{enumerate}\end{lemma}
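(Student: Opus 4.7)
My strategy is to prove the four parts in order, each leveraging the previous. The underlying fact throughout is that distinct components of $G$ commute elementwise, i.e.\ $[L_i,L_j]=1$ for $i \neq j$. For part~(\ref{lem1}), if $x \in L_i \cap L_j$, then $x$ centralizes both $L_i$ (as an element of $L_j$) and $L_j$ (as an element of $L_i$), and trivially centralizes the remaining components; hence $x \in Z(E(G))$. The second inclusion follows from $Z(E(G)) \le Z(F^*(G))$ (which holds because $[E(G),F(G)]=1$) combined with \Cref{tool Z(G) < Z(F^*(G)) < O2(G)Z(G)}. Part~(\ref{lem2}) is analogous: $Z(L_i)$ is central in $L_i$ and is centralized by the other components, so $Z(L_i) \le Z(E(G))$; the final containment in $O_p(G)$ then follows because $D \cap Z(E(G))$ is a $p$-subgroup of $O_p(G)Z(G)$, and the $p$-part of the latter is $O_p(G)$ (since $Z(G)_p \le O_p(G)$).

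For part~(\ref{lem3}), the defining generators of $N$ and $M$ are distinct components, so $[N,M]=1$; thus $x \in N \cap M$ centralizes both $N$ and $M$, forcing $x \in Z(N) \cap Z(M)$. An element of $Z(N)$ centralizes the components in $N$'s orbit by centrality, and centralizes the remaining components by being in $N$; so $Z(N) \le Z(E(G))$, and similarly for $Z(M)$. The conclusion then follows from part~(\ref{lem2}). For part~(\ref{lem4}), the assumption $O_p(G)=1$ forces $Z(G)$ (and hence $Z(E(G)) \le O_p(G)Z(G)=Z(G)$) to be a $p'$-group, so $L_i \cap L_j \le Z(E(G))$ is a $p'$-group. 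Consequently the $(D \cap L_i)$'s pairwise commute and pairwise intersect trivially, yielding the internal direct product $(D \cap L_1)\cdots(D \cap L_t) = (D \cap L_1) \times \cdots \times (D \cap L_t) \le D \cap E(G)$.

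For the reverse inclusion in part~(\ref{lem4}), I use \Cref{alp15}(\ref{alpintersect}) to see that $D \cap E(G)$ is a defect group of the $B$-covered block $b_{E(G)}$ of $E(G)$. Writing $E(G) = L_1 * (L_2 * \cdots * L_t)$, with all central glueings being $p'$, I iterate \Cref{tool central product D1D2=D nilp etc |E|=prod} to obtain a decomposition $D \cap E(G) = D_1 D_2 \cdots D_t$, where each $D_i$ is a defect group of the block $b_i$ of $L_i$ covered by $b_{E(G)}$. Since $D_i \le L_i$, the containment $D_i \le D \cap E(G) \le D$ forces $D_i \le D \cap L_i$, which combined with the forward inclusion yields the claimed equality. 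The main obstacle is precisely this last step: the $D_i$'s are only specified up to conjugacy, so one has to argue that compatible $E(G)$-conjugates can be selected at each stage of the iteration so that their product is exactly $D \cap E(G)$ (rather than some other defect group of $b_{E(G)}$), thereby placing each $D_i$ inside $D$.
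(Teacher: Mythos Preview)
Your arguments for parts (\ref{lem1})--(\ref{lem3}) are essentially the paper's, with only a cosmetic difference: you route the inclusion $Z(E(G)) \leq O_p(G)Z(G)$ through $Z(F^*(G))$ and \Cref{tool Z(G) < Z(F^*(G)) < O2(G)Z(G)}, whereas the paper observes directly that $Z(E(G))$ is abelian, hence carries a nilpotent $B$-covered block, hence lies in $O_p(G)Z(G)$ by reducedness.

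For part (\ref{lem4}) your approach genuinely differs, and is in fact more careful. The paper, having shown $D \cap Z(L_i)=1$ when $O_p(G)=1$, simply asserts the equality $D \cap E(G) = (D \cap L_1) \times \cdots \times (D \cap L_t)$ ``since distinct components intersect only in their centres''. That sentence justifies the directness of the product and the forward inclusion, but the reverse inclusion $D \cap E(G) \subseteq \prod_i(D \cap L_i)$ is not a purely group-theoretic consequence of trivial pairwise intersections (a diagonal $p$-subgroup of $L \times L$ already shows this fails for arbitrary $p$-subgroups). Your block-theoretic route through \Cref{alp15}(\ref{alpintersect}) and \Cref{tool central product D1D2=D nilp etc |E|=prod} is the honest way to obtain it, and effectively fills in what the paper leaves implicit. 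The obstacle you flag is real but easily dispatched: having obtained defect groups $D_i \leq L_i$ of the covered blocks $b_i$ with $D_1 \cdots D_t$ \emph{some} defect group of $b_{E(G)}$, conjugate by a single element $e \in E(G)$ so that ${}^e(D_1 \cdots D_t) = D \cap E(G)$; writing $e = e_1 \cdots e_t$ with $e_i \in L_i$, one has ${}^e D_i = {}^{e_i} D_i \leq L_i$ (the remaining $e_j$ centralise $L_i$), so the conjugated factors still lie componentwise in the $L_i$ and now sit inside $D$, giving $D_i' := {}^{e_i}D_i \leq D \cap L_i$ and hence $D \cap E(G) = \prod_i D_i' \leq \prod_i (D \cap L_i)$.
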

\begin{proof}
Since $E(G)$ is a central product,  $L_i \cap L_j \leq Z(L_i)$, and $Z(L_i) \leq Z(E(G))$. Since $Z(E(G))$ is abelian, $B$ covers a nilpotent block of $Z(E(G))$, and since $B$ is reduced this implies $Z(E(G)) \leq O_p(G)Z(G)$. Thus $L_i \cap L_j \leq O_p(G)Z(G)$ and $D \cap Z(L_i) \leq O_p(G)Z(G)$. However, $D \cap Z(G) \leq O_p(G)$, so $D \cap Z(L_i) \leq O_p(G)$. This proves statements (\ref{lem1}) and (\ref{lem2}).
	
This implies that $N \cap M$, which is the intersection of two conjugacy classes of components, lies in $Z(E(G))$. Therefore $D \cap (N \cap M) \leq D \cap Z(E(G)) \leq O_p(G)$, proving (\ref{lem3}). 
	
	Finally, if $O_p(G)=1$ then $Z(G)$ is a $p'$-order group by \Cref{tool Z(G) < Z(F^*(G)) < O2(G)Z(G)}, implying that $L_i \cap L_j$ is a $p'$-group by (\ref{lem1}). Further, $D$ is a $p$-group, so if $O_p(G)=1$ then $D \cap Z(L_i)=D \cap (N \cap M)=1$ by (\ref{lem2}) and (\ref{lem3}). Since distinct components intersect only in their centres, this implies $D \cap E(G)=(D \cap L_1) \times ... \times (D \cap L_t)$.	\end{proof}

\subsection{Blocks of Quasisimple Groups With Abelian Defect Group}
For the results above to be useful, we need to relate the subgroup structure of $G$ to the block substructure. To that end, we present the classification of $2$-blocks of quasisimple groups with abelian defect group from \cite{ekks}, followed by some clarifications we can make in our case.

\begin{theorem} \label{EKKS structure}
	(\cite[6.1]{ekks}) Let $L$ be a quasisimple group, and $b$ be a $2$-block of $\mathcal{O}L$ with abelian defect group $D$. Then one of the following holds. \begin{enumerate}			
		\item \label{EKKSi} $L/Z(L)$ is one of ${}^2G_2(q)$, $J_1$, or $SL_2(2^a)$, and $b$ is the principal block of $L$. 
		
		\item \label{EKKSii} $L$ is $Co_3$, $D \cong (C_2)^3$, and $b$ is the unique non-principal block of $L$.
		
		\item \label{EKKSiii} $b$ is nilpotent-covered, i.e. there exists a finite $\tilde{L} \rhd L$ with $Z(\tilde{L}) \geq Z(L)$ such that a nilpotent block of $\tilde{L}$ covers $b$.
		
		\item \label{EKKSiv} For some $M=M_0 \times M_1 \leq L$, $b$ is Morita equivalent to a block $b_M$ of $\mathcal{O}M$ such that the defect groups of $b_M$ are isomorphic to $D$. Further, $M_0$ is abelian, and the block of $\mathcal{O}M_1$ covered by $b_M$ has Klein $4$-defect groups. In particular, $b$ is Morita equivalent to a tensor product of a nilpotent block and a block with Klein $4$-defect group.	\end{enumerate}	\end{theorem}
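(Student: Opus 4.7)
The proof is a classification theorem that rests fundamentally on the Classification of Finite Simple Groups (CFSG). The plan is to run a case split on $S := L/Z(L)$ using CFSG, and for each family establish (a) which $2$-blocks $b$ of the covering groups of $S$ have abelian defect, and (b) how each such $b$ fits into cases (\ref{EKKSi})--(\ref{EKKSiv}). I would organise the work into four families: alternating groups, sporadic groups, groups of Lie type in defining characteristic $2$, and groups of Lie type in odd characteristic.

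The first three families can be dispatched relatively cleanly. For Lie type in defining characteristic $2$, every block of $L$ is either the principal block (with Sylow defect group) or of defect zero, so ``abelian defect'' reduces to ``abelian Sylow $2$-subgroup''. Inspection of $2$-ranks of simple groups of Lie type in characteristic $2$ isolates $L/Z(L) \cong SL_2(2^a)$, landing in case (\ref{EKKSi}). For alternating groups, I would invoke the combinatorial description of $2$-blocks of $S_n$ via $2$-cores and $2$-weights: a block has abelian defect iff its $2$-weight is at most $1$, forcing defect trivial, $C_2$, or Klein four. Descending to $A_n$ and working through its Schur cover (with the usual exceptions at $n=6,7$ examined by hand) then sorts each non-trivial case into either (\ref{EKKSiii}) or (\ref{EKKSiv}), with the Klein-four defect block supplying the $M_1$ factor. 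Sporadic groups are handled by inspection of the block tables in the modular ATLAS and subsequent computations; the only outputs that are not nilpotent-covered or reducible to a Klein-four situation are the principal block of $J_1$ (case (\ref{EKKSi})) and the unique non-principal $2$-block of $Co_3$ with $(C_2)^3$ defect (case (\ref{EKKSii})).

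The main obstacle is the remaining family: Lie type in odd characteristic. Here I would apply the Bonnaf\'e--Rouquier Morita reductions, together with the Cabanes--Enguehard block theory of finite reductive groups, to reduce the study of $b$ to quasi-isolated $2$-blocks, and then invoke an Enguehard-type classification of quasi-isolated $2$-blocks with abelian defect. The only simple family in this regime forcing case (\ref{EKKSi}) is ${}^2G_2(q)$, whose Sylow $2$-subgroup is elementary abelian of order $8$; everything else must be moved into case (\ref{EKKSiii}) by embedding $L$ into an extension carrying a diagonal outer automorphism of $p'$-order so that a nilpotent block covers $b$, or into case (\ref{EKKSiv}) by extracting a Klein-four factor via Deligne--Lusztig induction. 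Producing the nilpotent cover on demand is the delicate step --- it is the true engine of case (\ref{EKKSiii}), relies on subtle Clifford-theoretic twisting in the presence of regular embeddings, and is spread across several papers on the modular representation theory of reductive groups --- and this is where I would expect the bulk of the proof's technical weight to lie.
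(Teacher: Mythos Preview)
The paper does not contain a proof of this statement: Theorem~\ref{EKKS structure} is quoted verbatim from \cite[Theorem~6.1]{ekks} and is used as a black box in the subsequent arguments (notably in Corollary~\ref{ekkssimple} and in the proof of Theorem~\ref{main}). There is therefore no ``paper's own proof'' to compare your proposal against.

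That said, your outline is a faithful high-level description of how the proof in \cite{ekks} is actually organised: a CFSG case split over alternating, sporadic, defining-characteristic, and odd-characteristic Lie type groups, with the odd-characteristic case carrying the bulk of the work via Bonnaf\'e--Rouquier reductions and the Cabanes--Enguehard block parametrisation. Your identification of the ``nilpotent cover'' construction as the technical heart of case~(\ref{EKKSiii}) is accurate. One minor correction: for groups of Lie type in characteristic~$2$, it is not quite true that every block is either principal or of defect zero---this is a theorem of Humphreys for the simply connected covers, but the quasisimple $L$ may have a nontrivial centre, and one must also handle exceptional covers. The argument in \cite{ekks} does treat these points, but your sketch elides them.
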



Since $D$ is homocyclic, and since we will be able to assume that the block of the quasisimple group is not nilpotent, we can make the following clarifications to \Cref{EKKS structure}:

\begin{corollary} \label{ekkssimple}
Let $L$ be a quasisimple group, and $b$ be a non-nilpotent $2$-block of $\mathcal{O}L$ with defect group $D \cong (C_{2^m})^n$, where $n,m \geq 1$. Then the following table describes the six possible situations that can occur, including the isomorphism type of $L$, $D$, $\operatorname{Out}(L)$, and the inertial quotient $\mathbb{E}_b$ of $b$, where appropriate:

\addtolength{\parskip}{-0.6cm}\begin{table}[H] \small
	\centering 
	\begin{tabular}{c c c c c l} 
		\hline\hline  
		\textbf{Group} 				& \textbf{Block}& $D$		& \footnotesize{Out($L)$}& $\mathbb{E}_b$ 	& Notes\\ [1ex]
		\hline 
		$SL_2(2^n)$ 	& Principal 	& $(C_2)^n$	& $C_n$		&	$C_{2^n-1}$	&	 $n \geq 2$	\\ [1ex]
		${}^2G_2(q)$& Principal 	& $(C_2)^3$	& $C_q$		&	$C_7 \rtimes C_3$	& $q=3^r$, $r \geq 3$. 	\\ [1ex]
		$ J_1$ 		& Principal  	& $(C_2)^3$	& 	$1$	&	$C_7 \rtimes C_3$	&		\\ [1ex]
		$ Co_3$ 			& \small{Non-principal}	& $(C_2)^3$	& 	$1$	& 	$C_7 \rtimes C_3$	& \footnotesize{Unique non-principal}\\[1ex]
 \footnotesize{$L/Z(L)$ type $A_t$ or $E_6$}		& \footnotesize{Nilpotent-covered}& 	-	& 	-	&	-	&	\\ [1ex]
	$M_0 \times M_1 \leq L$ & $b_0 \otimes b_1$	& 	$D_0 \times C_2^2$	&	-	& $C_3$ & \dagger \\ 		\hline 	\end{tabular}\end{table} \noindent \dagger { } In the final case,  $b$ is Morita equivalent to $b_0 \otimes b_1$, where $b_i$ is a block of $M_i$. Further, $M_0$ is abelian, $b_1$ is Puig equivalent to $\mathcal{O}A_4$ or $B_0(\mathcal{O}A_5)$, and $\mathbb{E}_b \cong C_3$ centralises $D_0=D \cap M_0$. If $m=1$, then $L$ is of type $D_t(q)$ or $E_7(q)$ where $q$ is an odd prime power and $t/2$ is odd. \addtolength{\parskip}{+0.6Cm}
\end{corollary}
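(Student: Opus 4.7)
The plan is to refine each of the four cases of \Cref{EKKS structure} using the hypotheses that $D\cong (C_{2^m})^n$ is homocyclic and that $b$ is non-nilpotent, reading off the remaining structural data from known results about blocks of quasisimple groups. The corollary is essentially a bookkeeping statement once the restrictions are imposed on the generic list.

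In case (\ref{EKKSi}) I would treat each candidate for $L/Z(L)$ separately. For $SL_2(2^a)$ a Sylow $2$-subgroup consists of the lower unitriangular matrices, hence is elementary abelian of rank $a$, which fits the homocyclic hypothesis with $m=1$ and $n=a$; the case $a=1$ yields a cyclic defect group and is excluded by non-nilpotence. The torus of diagonal matrices acts transitively on the non-trivial elements of $D$, yielding inertial quotient $C_{2^a-1}$, and $\operatorname{Out}(SL_2(2^a))\cong C_a$ is generated by the Frobenius. For ${}^2G_2(q)$ and $J_1$ a Sylow $2$-subgroup is known to be $(C_2)^3$ with Frobenius normaliser, giving inertial quotient $C_7\rtimes C_3$, and the outer automorphism groups are computed from the standard classification of these almost simple groups. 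Case (\ref{EKKSii}) is transcribed directly, with the inertial quotient of the unique non-principal block of $Co_3$ read off the local structure at a Sylow $2$-subgroup (or from the ATLAS).

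For case (\ref{EKKSiii}) the stated type restriction to $A_t$ or $E_6$ would be imported from the existing classification of non-nilpotent, nilpotent-covered $2$-blocks of quasisimple groups, where the underlying Lie type is constrained by the form of the nilpotent-covering overgroup. For case (\ref{EKKSiv}) I would argue directly from \Cref{EKKS structure}: the block $b_M$ has defect group $D\cong D_0\times V$ where $V\cong (C_2)^2$ is the defect group of the Klein $4$ block on $M_1$, and for $D$ to be homocyclic of exponent $2^m$ this forces $m=1$ and $D_0\cong (C_2)^{n-2}$. The classification of blocks with Klein $4$ defect group from \cite{kleinfour} then identifies $b_1$ as Puig equivalent to $\mathcal{O}A_4$ or $B_0(\mathcal{O}A_5)$, both of which have inertial quotient $C_3$. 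Since $M_0$ is abelian and commutes with $M_1$, this $C_3$ acts trivially on $D_0$, so $\mathbb{E}_b\cong C_3$ centralises $D_0=D\cap M_0$. The additional Lie-type restriction for $L$ follows from Kessar's classification of quasisimple groups admitting blocks with Klein $4$ defect.

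The main obstacle is not any single case in isolation but the bookkeeping required to extract the precise structure of $D$, $\mathbb{E}_b$, and $\operatorname{Out}(L)$ for each row of the table from the literature, and in particular to pin down the type restriction in case (\ref{EKKSiii}), which depends on results about nilpotent-covered blocks of quasisimple groups that are somewhat scattered. The case analysis itself is routine once those inputs are collected; the substantive work lies in assembling the correct citations and verifying each row is consistent with both the homocyclic hypothesis and the non-nilpotence of $b$.
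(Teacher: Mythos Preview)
Your overall plan matches the paper's, but there are two genuine gaps.

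First, in case (\ref{EKKSi}) you silently identify $L$ with $L/Z(L)$. \Cref{EKKS structure} only tells you that $L/Z(L)$ is one of ${}^2G_2(q)$, $J_1$, or $SL_2(2^a)$; nothing yet rules out a proper covering group. The paper handles this explicitly: ${}^2G_2(q)$, $J_1$, and $SL_2(2^a)$ for $a>2$ have trivial Schur multiplier, and the double cover of $SL_2(4)\cong A_5$ has non-abelian Sylow $2$-subgroups, so the abelian defect hypothesis forces $L=L/Z(L)$. Without this step your computation of $D$, $\mathbb{E}_b$, and $\operatorname{Out}(L)$ in the first three rows is not justified. The same issue arises for $Co_3$ in case (\ref{EKKSii}).

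Second, in case (\ref{EKKSiv}) your argument for $\mathbb{E}_b\cong C_3$ centralising $D_0$ actually establishes this only for the inertial quotient of $b_M=b_0\otimes b_1$ as a block of $M_0\times M_1$, not for $b$ as a block of $L$. Morita equivalence alone does not transport the inertial quotient or its action on $D$; you need to know that the specific equivalence in \Cref{EKKS structure}(\ref{EKKSiv}) is the Bonnaf\'e--Dat--Rouquier correspondence, which does preserve defect groups and inertial quotients up to isomorphism (this is what the paper invokes via \cite{bonnaferouquierdat}). Relatedly, you should explicitly exclude the possibility $b_1\sim \mathcal{O}(C_2)^2$ from \cite{kleinfour}, since that case would make $b_M$ and hence $b$ nilpotent.
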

\begin{proof}
We apply \Cref{EKKS structure}. In case (\ref{EKKSi}) of \Cref{EKKS structure}, the groups ${}^2G_2(q)$, $J_1$, and $SL_2(2^n)$ for $n > 2$, have trivial Schur multiplier. Further, if $L/Z(L) \cong SL_2(4)$ then $L=L/Z(L)$ since the double cover of $SL_2(4)$ has non-abelian defect group. Thus $L/Z(L)=L$ in case (\ref{EKKSi}). The principal block of ${}^2G_2(q)$ has defect group $(C_2)^3$ by \cite{Reee,2G2KKL}, and the character table of $J_1$ in \cite{janko} demonstrates that the $2$-blocks of $J_1$ have defect $0$, except for the principal $2$-block, whose defect group is $(C_2)^3$ \cite{J1LandM}.

The Conway group $Co_3$ also has trivial Schur multiplier, and has a unique non-prinipal $2$-block with defect group $(C_2)^3$, by \cite[1.5]{KoshitaniaBrouS}.

If $b$ is nilpotent-covered but not nilpotent, then the hypothesis of \cite[5.4]{ekks} applies to \cite[4.2]{ekks}, so in case (\ref{EKKSiii}), $L/Z(L)$ is of type $A_t(q)$ or $E_6(q)$, where $q$ is a power of an odd prime.

In case (\ref{EKKSiv}) of \Cref{EKKS structure}, there exists $M_0 \times M_1 \leq L$ with $M_0$ abelian, and $b$ is Morita equivalent to $b_0 \otimes b_1$, where $b_i$ is a block of $M_i$ and $b_1$ has Klein four defect groups. The main result of \cite{kleinfour} tells us that $b_1$ is Puig equivalent to the principal blcok of $\mathcal{O}A_5$, $\mathcal{O}A_4$, or $\mathcal{O}(C_2)^2$. However, we can exclude the latter since $b_0$ is nilpotent but $b$ is not. The Morita equivalence between $b$ and $b_0 \otimes b_1$ is given by the Bonnaf\'e-Dat-Rouquier correspondence, which preserves the defect group and inertial quotient up to isomorphism \cite{bonnaferouquierdat}, implying that $b$ has inertial quotient $C_3$ which centralises $D \cap M_0$, and which transitively permutes the non-trivial elements of $D \cap M_1 \cong (C_2)^2$. Finally, if $D$ is elementary abelian,  then \cite[Proposition 5.3]{ekks} tells us $L$ is of type $D_t(q)$ or $E_7(q)$ for $q$ an odd prime power and $t/2$ odd. \end{proof}

\begin{remark} \label{allMor}
It is proven in \cite[Theorem 1.5, Lemma 4.2 (xi)]{KoshitaniaBrouS} that the non-principal block of $Co_3$ with defect group $(C_2)^3$ is Puig equivalent to the principal block of $\operatorname{Aut}(SL_2(8))$. 

Further, according to \cite[Example 3.3, Remark 3.4]{okuyamasomeexamples}, it had been essentially proved in \cite{Reee} that the principal block of $\operatorname{Aut}(SL_2(8))$ and the principal block of ${}^2G_2(q)$ are Puig equivalent (see \cite[Theorem 1.6]{KoshitaniaBrouS}).
\end{remark}



\subsection{Inertial Quotient Analysis}
We saw in \Cref{alp15} that when a block $B$ covers a block $b$, the defect group of $b$ is easily understood in terms of the defect group above. The inertial quotient is not so well behaved, but in this subsection we show that there are circumstances where we can usefully compare the inertial quotients of $B$ and $b$.

\begin{lemma} \label{outsl}
Let $L  \cong SL_2(2^n)$ and $L \leq G \leq \operatorname{Aut}(L)$. Let $B$ be a block of $G$ covering the principal block $b$ of $L$. Then $G=L \rtimes F$ where $F \leq C_n$, $b$ has inertial quotient $E_b \cong C_{2^n-1}$, and $B$ has inertial quotient $\mathbb{E}=E_b \rtimes F$. 
\end{lemma}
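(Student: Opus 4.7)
The plan is to combine the classical description of $\operatorname{Aut}(SL_2(2^n))$ with a direct computation of $N_G(D)$ and $C_G(D)$ for a well-chosen defect group $D$.

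First, $\operatorname{Out}(SL_2(2^n)) \cong C_n$ is generated by the Frobenius $\varphi: x \mapsto x^2$ acting entrywise, so $\operatorname{Aut}(L) = L \rtimes \langle \varphi \rangle$. Since $L \lhd G \leq \operatorname{Aut}(L)$, this gives $G = L \rtimes F$ with $F$ identified with a subgroup of $\langle \varphi \rangle \cong C_n$. For $b$, take $D$ to be the upper unitriangular Sylow $2$-subgroup of $L$, so $D \cong (\mathbb{F}_{2^n}, +) \cong (C_2)^n$. Its normaliser in $L$ is the Borel subgroup $T = D \rtimes H$, with $H$ the diagonal torus of order $2^n-1$ acting on $D$ by the standard multiplication. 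Since $Z(L) = 1$ and $H$ acts regularly on $D \setminus \{1\}$, we have $C_L(D) = D$, so $E_b = N_L(D)/C_L(D) \cong H \cong C_{2^n-1}$.

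For $B$: since $b$ is the principal block of the normal subgroup $L$, Brauer's Third Main Theorem identifies $B$ as the principal block of $G$. The key observation is that $\varphi$ preserves both $D$ and $H$ set-wise, because the definitions of upper unitriangular and of diagonal matrices are compatible with entrywise squaring. Hence $F \leq N_G(D)$ and $F$ normalises $H$, giving
\[ N_G(D) = N_L(D) \rtimes F = (D \rtimes H) \rtimes F = D \rtimes (H \rtimes F). \]
Moreover, any nontrivial $\varphi^k \in F$ acts on $D \cong (\mathbb{F}_{2^n}, +)$ as a nontrivial field automorphism, whose fixed field is a proper subfield of $\mathbb{F}_{2^n}$; no nonidentity element of $F$ therefore centralises all of $D$, and $C_G(D) = D$.

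Finally, because $D$ is abelian, $DC_G(D) = D$ is a $2$-group with a unique block, so any Brauer correspondent $B_D$ of $B$ is automatically $G$-stable, giving $N_G(D, B_D) = N_G(D)$. Consequently
\[ \mathbb{E} = N_G(D)/C_G(D) = \bigl(D \rtimes (H \rtimes F)\bigr)/D \cong H \rtimes F \cong E_b \rtimes F, \]
as required. The main delicacy is ensuring that $D$ is genuinely a defect group of $B$, which holds when $|F|$ is odd — the case of interest, since in the application in \Cref{main} the inertial quotient $\mathbb{E}$ is a $2'$-group. A secondary point is the joint set-wise invariance of $D$ and $H$ under $\varphi$, which is what makes the regrouping $(D \rtimes H) \rtimes F = D \rtimes (H \rtimes F)$ valid and yields the correct Galois action of $F$ on $H$.
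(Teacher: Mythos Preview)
Your argument is correct and follows essentially the same route as the paper: both take $D$ to be the upper unitriangular subgroup, use that $\operatorname{Aut}(L)=L\rtimes\langle\varphi\rangle$ with $\varphi$ the Frobenius, and observe that the field automorphisms normalise $D$ (and $H$) without centralising $D$, yielding $C_G(D)=C_L(D)=D$ and $N_G(D)=N_L(D)\rtimes F$. Your version is in fact slightly more explicit, and you are right to flag that $D$ is a defect group of $B$ only when $|F|$ is odd---the paper tacitly relies on this (via ``$\mathbb{E}$ has odd order''), which is harmless since in the intended application $\mathbb{E}$ is a $2'$-group.
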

\begin{proof}
The center of $SL_2(2^n)$ is trivial, so $G \leq \operatorname{Aut}(L) \cong SL_2(2^n) \rtimes \operatorname{Out}(SL_2(2^n))$, where $\operatorname{Out}(SL_2(2^n))\cong C_n$ are the field automorphisms of the underlying field \cite[Table 5]{atlas}. Since $b$ is principal, the defect groups of $b$ are precisely the Sylow-$2$-subgroups of $SL_2(2^n)$. Therefore, consider the defect group of the form  $$D = \biggl\{ \begin{pmatrix} 1 & f \\ 0 & 1 \end{pmatrix} \text{ } | \text{ } f \in \mathbb{F}_{2^n} \biggr\} \cong (C_2)^n .$$ By definition, the field automorphisms will normalise but not centralise $D$. That is, $C_L(D)=C_G(D)$ and $[N_G(D):N_L(D)]=[G:L]$, implying that $E_b \lhd \mathbb{E}$ with $[\mathbb{E}:E_b]=[G:L]$. The inertial quotient of $b$ is generated by a Singer cycle, so $E_b \cong C_{2^n-1}$, and, since $\mathbb{E}$ has odd order, \Cref{homo Sing lem} (\ref{'1}) then tells us $C_{2^n-1} \leq \mathbb{E} \leq C_{2^n-1} \rtimes C_n$. 
\end{proof}

The following is an extension of \cite[Lemma 6.3]{wuzzz}, whose sophisticated proof applies more broadly than is stated in that paper. Following \cite{koshitanikulshammer}, denote the stabiliser in $C_X(D)$ of a block $b$ by $C_X(D)_{b}$.

\begin{lemma} \label{tool IQ<IQ conditions} \label{IQlem} 
	Let $N \lhd G$ be finite groups. Let $B$ be a block of $G$ with inertial quotient $E_B$ covering a $G$-stable block $b$ of $N$ with inertial quotient $E_b$, such that $B$ and $b$ share an abelian defect group $D$. Let $b_D$ be a Brauer correspondent of $b$ in $C_N(D)$. Then the following statements are true: \begin{enumerate} \item \label{IQ tool case1}If $C_G(D)=C_N(D)$, then there is a monomorphism $E_b \hookrightarrow E_B$ whose image is normal. 
		\item \label{IQ tool case2} If $C_N(D) \neq C_G(D)$ and $C_G(D)_{b_D}=C_N(D)$, then there is an monomorphism $E_b \hookrightarrow E_B$ whose image is normal.
		\item \label{IQ tool case3} If $[C_G(D):C_N(D)]=[G:N]$ and $C_G(D)_{b_D}=C_G(D)$, then there is an monomorphism $E_B \hookrightarrow E_b$. 
		
		
		\color{black}
	\end{enumerate} In particular, if $[G:N]$ is prime, then either there is a normal subgroup of $E_B$ that is isomorphic to $E_b$, or there is a subgroup of $E_B$ that is isomorphic to $E_b$.	\end{lemma}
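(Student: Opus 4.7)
The plan is to realise both inertial quotients as concrete quotients of normalisers in a compatible way, then move information between $N$ and $G$ using Clifford-theoretic control of how $B_D$ sits above $b_D$. Fix a Brauer correspondent $b_D$ of $b$ in $C_N(D)$ and, using the $G$-stability of $b$, a compatible correspondent $B_D$ of $B$ in $C_G(D)$ that covers $b_D$. Then $E_B = N_G(D, B_D)/C_G(D)$, $E_b = N_N(D,b_D)/C_N(D)$, and $C_G(D) \leq N_G(D, B_D)$ automatically. The bridge between $E_b$ and $E_B$ will be the subgroup $N_G(D, B_D) \cap N$, and the hypotheses on $C_G(D)_{b_D}$ will be used to force (non)uniqueness statements via Fong--Reynolds.

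In case (\ref{IQ tool case1}), $C_G(D) = C_N(D)$ makes $B_D$ and $b_D$ blocks of the same algebra, and since $B_D$ covers $b_D$ they coincide; the inclusion $N_N(D,b_D) \hookrightarrow N_G(D,b_D)$ then descends to $E_b \hookrightarrow E_B$, with normality inherited from $N \lhd G$. In case (\ref{IQ tool case2}), the assumption $C_G(D)_{b_D}=C_N(D)$ combined with Fong--Reynolds in $DC_G(D)$ makes $B_D$ the unique block of $DC_G(D)$ covering $b_D$. Uniqueness promotes $N_G(D, b_D)$ to a subgroup of $N_G(D, B_D)$, and transitivity of the $C_G(D)$-action on the $[C_G(D):C_N(D)]$-sized orbit of $b_D$ gives the factorisation $N_G(D, B_D) = N_G(D, b_D) \cdot C_G(D)$. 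The induced map $E_b \to E_B$ has image $N_N(D,b_D) C_G(D)/C_G(D)$ with trivial kernel $N_N(D,b_D)\cap C_G(D) = C_N(D)$, and is normal because $N_N(D,b_D) \lhd N_G(D, b_D)$ transfers through this product decomposition.

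In case (\ref{IQ tool case3}), the equality $[C_G(D):C_N(D)]=[G:N]$ together with $C_N(D) = C_G(D) \cap N$ yields $G=C_G(D)N$, and hence $N_G(D) = C_G(D) N_N(D)$ by Dedekind's rule. Since $C_G(D) \leq N_G(D, B_D)$, the same rule gives $N_G(D,B_D) = N_N(D,B_D)\cdot C_G(D)$, and the second isomorphism theorem then yields
\[
E_B = \frac{N_N(D,B_D)\cdot C_G(D)}{C_G(D)} \;\cong\; \frac{N_N(D,B_D)}{C_N(D)}.
\]
The assumption $C_G(D)_{b_D}=C_G(D)$ forces the $C_G(D)$-orbit of $b_D$ to be the singleton $\{b_D\}$, which is precisely the set of $C_N(D)$-blocks covered by $B_D$; hence any $n \in N_N(D,B_D)$ must fix $b_D$, giving $N_N(D,B_D) \leq N_N(D,b_D)$ and the embedding $E_B \hookrightarrow E_b$.

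The ``in particular'' statement then drops out by a divisibility argument: when $[G:N]$ is prime, $[C_G(D):C_N(D)]$ divides $[G:N]$ and is either $1$ (case \ref{IQ tool case1}) or $[G:N]$; in the latter situation $[C_G(D):C_G(D)_{b_D}]$ again divides $[G:N]$, so $C_G(D)_{b_D}$ equals $C_N(D)$ or $C_G(D)$, triggering case (\ref{IQ tool case2}) or (\ref{IQ tool case3}) respectively. The main obstacle I anticipate is the block-theoretic bookkeeping around $B_D$ above $b_D$: each case leans on a precise uniqueness (or control) statement obtained from Fong--Reynolds via the stabiliser hypothesis on $b_D$, and translating this into normaliser identities such as $N_G(D,B_D) = N_G(D,b_D) \cdot C_G(D)$ or $N_G(D, B_D) = N_N(D, B_D) \cdot C_G(D)$ is where the care is required.
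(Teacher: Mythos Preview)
Your proposal is correct and follows essentially the same route as the paper: in each case you identify the same normaliser factorisations ($N_G(D,B_D)=N_G(D,b_D)C_G(D)$ in case~(\ref{IQ tool case2}), and $G=NC_G(D)$ leading to $N_G(D,B_D)=N_N(D,B_D)C_G(D)$ in case~(\ref{IQ tool case3})) and deduce the embeddings via the second isomorphism theorem. The only cosmetic differences are that you phrase the uniqueness of $B_D$ over $b_D$ via Fong--Reynolds where the paper writes out the idempotent decomposition $B_D=\sum_{g\in I}{}^g b_D$ explicitly, and you invoke Dedekind's rule by name where the paper leaves it implicit; note also that since $D$ is abelian, $DC_G(D)=C_G(D)$, so your reference to ``$DC_G(D)$'' in case~(\ref{IQ tool case2}) is harmless but redundant.
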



\begin{proof}
	We choose Brauer correspondents $B_D$ and $b_D$ for $B$ and $b$ in $C_G(D)$ and $C_N(D)$, respectively, such that $B_D$ covers $b_D$ (we are able to do so by \cite[Lemma 2.1]{kessarmalle}, as explained in \cite[Lemma 2.5]{CesC2^5}). 
	
	Suppose $C_G(D)=C_N(D)$. This implies $B_D=b_D$ and so $E_b \lhd E_B$.
	
	Suppose $C_N(D) \neq C_G(D)$ and $C_G(D)_{b_D}=C_N(D)$. Let $I$ be a complete set of coset representatives of $C_N(D)$ in $C_G(D)$. Since $B_D$ covers $b_D$, we have $B_D=\sum_{g \in I}({}^gb_D)$, and $({}^hb_D)b_D=0$ for any $h \in I \backslash \{C_N(D)\}$. Therefore, if $g \in N_G(D,b_D)$, then $g$ must stabilise $B_D$, so $N_G(D,b_D) \subset N_G(D,B_D)$. For each $y \in N_G(D,B_D)$, there exists some $z \in C_G(D)$ such that $z^{-1}y$ centralises $D$. Thus $N_G(D,B_D) \subset N_G(D,b_D)C_G(D)$ and so $N_G(D,B_D)=N_G(D,b_D)C_G(D)$. Therefore, the inclusion $N_N(D,b_D) \subset N_G(D,B_D)$ induces an injective group homomorphism $E_b \hookrightarrow E_B$, whose image is normal. 
	
	Suppose $[C_G(D):C_N(D)]=[G:N]$ and $C_G(D)_{b_D}=C_G(D)$ (i.e. $b_D$ is $C_G(D)$-stable). Then $b_D$ is a central idempotent in $\mathcal{O}C_G(D)$, and is the unique block of $C_N(D)$ covered by $B_D$. Thus, $N_G(D,B_D) \subset N_G(D,b_D)$. Since $[C_G(D):C_N(D)]=[G:N]$, we can write $G=NC_G(D)$, and thus $N_G(D,b_D)=N_N(D,b_D)C_G(D)$. Then the inclusion $N_G(D,B_D) \subset N_G(D,b_D)$ induces an injective group homomorphism $ {N_G(D,B_D)}/{C_G(D)} \hookrightarrow {N_G(D,b_D)}/{C_G(D)} \cong {N_N(D,b_D)}/{C_N(D)}.$
	
	Finally, if $[G:N]$ is prime, then either $C_G(D)=C_N(D)$, or $[C_G(D):C_N(D)]=[G:N]$. In the former case, (\ref{IQ tool case1}) implies there is a monomorphism $E_b \hookrightarrow E_B$ whose image is normal. In the latter, either $C_G(D)_{b_D}=C_N(D)$, in which case (\ref{IQ tool case2}) implies $E_b \hookrightarrow E_B$ with normal image, or $b_D$ is $C_G(D)$-stable and (\ref{IQ tool case3}) tells us there is a monomorphism $E_B \hookrightarrow E_b$. \qedhere  \end{proof}

When $G/N$ is abelian, \Cref{tool IQ<IQ conditions} allows us to partially relate the inertial quotient of a reduced block $B$ of $G$ to the inertial quotient of a $B$-covered block of $N$: 

\begin{theorem} \label{iq tool EK}\label{IQcor}
	Let $N \lhd G$ be finite groups such that $G/N$ is abelian, and let $B$ be a reduced block of $G$ with inertial quotient $E_B$ covering a block $b$ of $N$ with inertial quotient $E_b$, such that $B$ and $b$ share an abelian defect group $D$. Then there is a chain $N \lhd K \lhd G$ and a block $b_K$ of $K$ with inertial quotient $E_{b_K}$ such that there are monomorphisms $E_b \hookleftarrow E_{b_K} \hookrightarrow E_B$, where the image of $E_{b_K}$ is normal in $E_B$. 
\end{theorem}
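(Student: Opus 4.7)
The plan is to build a chain of prime-index normal subgroups between $N$ and $G$ and apply \Cref{IQlem} repeatedly. First, since $G/N$ is abelian, $\operatorname{Stab}_G(b)$ is normal in $G$, so the Fong--Reynolds correspondence (\Cref{F1}) lets us replace $B$ with a basic Morita equivalent block of $\operatorname{Stab}_G(b)$ and assume $b$ is $G$-stable without loss of generality; this preserves the defect group $D$ and the inertial quotient $E_B$, and any chain $N \lhd K \lhd \operatorname{Stab}_G(b)$ we produce is automatically a chain in $G$.

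Since $G/N$ is abelian (hence solvable), I would next choose a chain $N = N_0 \lhd N_1 \lhd \cdots \lhd N_t = G$ with each $N_i$ normal in $G$ and each $[N_{i+1}:N_i]$ prime. I would inductively pick a compatible sequence of blocks $b_i$ of $N_i$, each with defect group $D$, such that $b_0 = b$, $b_t = B$, and $b_{i+1}$ covers $b_i$; using further Fong--Reynolds reductions where needed, I would arrange each $b_i$ to be $N_{i+1}$-stable so that the prime-index form of \Cref{IQlem} applies at every step. That lemma then classifies each step $N_i \lhd N_{i+1}$ as either an \emph{up} step (a monomorphism $E_{b_i} \hookrightarrow E_{b_{i+1}}$ with normal image) or a \emph{down} step (a monomorphism $E_{b_{i+1}} \hookrightarrow E_{b_i}$).

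To finish, let $j$ be the smallest index such that every step from $N_j$ to $G$ is an up step. Composition of normal monomorphisms then gives $E_{b_j} \hookrightarrow E_B$ with normal image, so setting $K := N_j$ and $b_K := b_j$ yields the right-hand monomorphism of the theorem. For the left-hand monomorphism $E_{b_K} \hookrightarrow E_b$, one wants every step from $N_0$ to $N_j$ to be a down step, so that composition of the down monomorphisms gives the required embedding.

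The main obstacle is arranging this last property: \emph{a priori} the chain below $N_j$ may contain intervening up steps, and successive up and down embeddings do not obviously compose. I would handle this by showing that the chain can be reselected so all down steps precede all up steps, reducing to the local problem at each ``up-then-down'' pair $N_i \lhd N_{i+1} \lhd N_{i+2}$. Whenever $N_{i+2}/N_i$ has an alternative intermediate subgroup, replacing $N_{i+1}$ effects the swap; in the remaining case where $N_{i+2}/N_i$ is cyclic of prime-squared order, one would argue directly, inspecting how $C_{N_{i+1}}(D)$ sits between $C_N(D)$ and $C_{N_{i+2}}(D)$ relative to the Brauer correspondent of $b$ in $C_N(D)$, to show that such a configuration either cannot arise or can be collapsed into a single application of \Cref{IQlem} at composite index.
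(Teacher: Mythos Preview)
Your approach has a genuine gap, and it is precisely the one you flag yourself: the rearrangement of up and down steps. You treat this as a local swapping problem at each ``up-then-down'' pair $N_i \lhd N_{i+1} \lhd N_{i+2}$, but even when $N_{i+2}/N_i$ has an alternative intermediate subgroup $N_{i+1}'$, there is no reason the new pair of steps should be ``down-then-up'' rather than, say, ``up-then-up'' or ``up-then-down'' again; the up/down dichotomy in \Cref{IQlem} depends on how $C_G(D)$ and the stabilizer of the Brauer correspondent sit relative to the chosen intermediate, and this does not behave predictably under such swaps. The cyclic prime-squared case you leave entirely open. A secondary issue is your claim that composing ``up'' monomorphisms yields an embedding with normal image: normal-in-normal is only subnormal in general, and you would need to argue from the explicit description of the embeddings in \Cref{IQlem} that normality survives composition.

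The paper avoids all of this by abandoning prime-index chains and using the full force of cases (\ref{IQ tool case1})--(\ref{IQ tool case3}) of \Cref{IQlem} directly. It constructs the intermediate group $K$ explicitly: starting from $N$, one repeatedly adjoins $C_G(D)_{(b_i)_D}$, the stabilizer in $C_G(D)$ of the current Brauer correspondent. At each such step the hypotheses of case~(\ref{IQ tool case3}) hold \emph{by construction}, giving a chain of ``down'' embeddings $E_b \hookleftarrow \cdots \hookleftarrow E_{b_K}$. Once the process stabilizes at $K$ (so $C_G(D)_{(b_K)_D} \leq K$), one adjoins all of $C_G(D)$; this puts one exactly in case~(\ref{IQ tool case2}), and then case~(\ref{IQ tool case1}), yielding two ``up'' embeddings with normal image ending at $E_B$. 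The point is that the chain is tailored so that the down/up split is automatic rather than something to be engineered after the fact.
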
	
\begin{proof}
Since $G/N$ is abelian, we have that for all $C \leq G$, $CN/N \lhd G/N$  and thus $NC \lhd G$. 

If $C_N(D)=C_G(D)$ then \Cref{IQlem} (\ref{IQ tool case1}) tells us there is a monomorphism $E_b \hookrightarrow E_B$ whose image is normal, in which case we are done by setting $K$ as either $N$ or $G$. 
	
If $C_N(D) \neq C_G(D)$, then define $L:=NC_G(D)_{b_D}$. Thus we have $N \lhd L$ and $[C_L(D):C_N(D)]=[L:N]$ with $C_L(D)_{b_D}=C_L(D)$. Thus, \Cref{IQlem} (\ref{IQ tool case3}) tells us there is a monomorphism $E_b \hookleftarrow E_{b_L}$. 
	
	Next, consider the chain $L=K_0 \lhd K_1 \lhd K_2 \lhd ... $ defined by $K_{i+1}:=K_{i}C_G(D)_{{b_{i}}_D}$, where $B$ covers a block $b_i$ of each $K_i$, and ${b_i}_D$ is a Brauer correspondent of $b_i$ in $C_{K_i}(D)$. Since $[G:K]$ is finite, there must be some $j \geq 0$ such that $K_j = K_{j+1}$, in which case $C_G(D)_{{b_j}_D} \leq K_j$, and $[C_{K_j}(D):C_{K_0}(D)]=[K_j:K_0]$ with $C_{K_j}(D)_{{b_j}_D}=C_{K_j}(D)$, and so \Cref{IQlem} (\ref{IQ tool case3}) tells us that there is a monomorphism $E_{b_{L}}=E_{b_0} \hookleftarrow E_{b_j}$. 
	
	Now, consider $M=K_jC_G(D)$. Since $C_G(D)_{{b_j}_D} \leq K_j$, we have that $[C_M(D):C_{K_j}(D)]=[M:K_j]$ and $C_M(D)_{{b_j}_D}=C_{K_j}(D)$, and so \Cref{IQlem} (\ref{IQ tool case2}) tells us there is a monomorphism $E_{b_{j}} \hookrightarrow E_{b_M}$ whose image is normal.
	
	Finally, since $C_G(D) \leq M$, we have that $C_G(D)=C_M(D)$, and so \Cref{IQlem} (\ref{IQ tool case1}) tells us there is a monomorphism $E_{b_M} \hookrightarrow E_B$ whose image is normal.
	
	In summary, we have a chain $N \lhd K_0 \lhd ... \lhd K_j \lhd M \lhd G$, with a corresponding chain of inertial quotients of $B$-covered blocks and associated monomorphisms $$E_{b} \hookleftarrow E_{b_0} \hookleftarrow ... \hookleftarrow E_{b_j} \hookrightarrow E_{b_M} \hookrightarrow E_B,$$ where the images of the final two monomorphisms are normal in $E_{b_M}$ and in $E_B$, respectively. In particular, $E_{b_j}$ is isomorphic to a subgroup of $E_{b}$ and to a normal subgroup of $E_B$.
\end{proof}

\begin{corollary}
	Assume the conditions of \Cref{IQcor}. Suppose that $B$ does not cover any nilpotent block of any normal subgroup containing $N$. If $|E_b|$ is prime, then there is a monomorphism $E_b \hookrightarrow E_B$ whose image is normal, and if $|E_B|$ is prime then there is a monomorphism $E_B \hookrightarrow E_b$. If $|E_b|$ and $|E_B|$ are both prime, then $E_b \cong E_B$.
\end{corollary}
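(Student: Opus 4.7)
The plan is to apply \Cref{IQcor} directly and then use the non-nilpotence hypothesis to rule out the only obstruction. First, I would invoke \Cref{IQcor} to produce a chain $N \lhd K \lhd G$ together with a $B$-covered block $b_K$ of $K$, equipped with monomorphisms $E_b \hookleftarrow E_{b_K} \hookrightarrow E_B$ where the right-hand image is normal in $E_B$. A small but essential point to record (implicit in the proof of \Cref{IQcor}, since each application of \Cref{IQlem} demands it) is that $b_K$ shares the abelian defect group $D$ with $B$.

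The crux of the argument is showing $E_{b_K} \neq 1$. Because $D$ is abelian, $b_K$ is nilpotent if and only if its inertial quotient is trivial. If $E_{b_K}$ were trivial, then $b_K$ would be a nilpotent block of $K$, a normal subgroup of $G$ containing $N$, covered by $B$ — directly contradicting the standing hypothesis. Hence $E_{b_K}$ is a non-trivial group embedded in both $E_b$ and $E_B$.

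From this, the three conclusions follow formally. If $|E_b|$ is prime, then the non-trivial subgroup $E_{b_K} \hookrightarrow E_b$ must in fact be all of $E_b$, so $E_{b_K} \cong E_b$; composing with the normal embedding $E_{b_K} \hookrightarrow E_B$ yields the required monomorphism $E_b \hookrightarrow E_B$ with normal image. Symmetrically, if $|E_B|$ is prime, the normal image of $E_{b_K}$ in $E_B$ is non-trivial and therefore all of $E_B$, making $E_{b_K} \hookrightarrow E_B$ an isomorphism; composing its inverse with $E_{b_K} \hookrightarrow E_b$ produces $E_B \hookrightarrow E_b$. When both orders are prime, both monomorphisms are isomorphisms, so $E_b \cong E_{b_K} \cong E_B$.

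I do not expect any genuinely hard step here; the real work is already done in \Cref{IQcor}, and the corollary amounts to deploying the non-nilpotence hypothesis to exclude the degenerate case $E_{b_K} = 1$. The only place requiring care is confirming that the chain constructed in \Cref{IQcor} produces a block $b_K$ with the same defect group $D$, so that the equivalence ``nilpotent iff trivial inertial quotient'' for abelian-defect blocks can be applied to $b_K$.
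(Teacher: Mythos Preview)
Your proposal is correct and is precisely the intended argument; the paper states this corollary without proof, treating it as an immediate consequence of \Cref{IQcor}, and your write-up supplies exactly the missing details (non-triviality of $E_{b_K}$ via the non-nilpotence hypothesis, then formal consequences of primality).
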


\section{Proof of the main theorem} \label{section proof}
We will conclude this section by proving \Cref{main}. First, we apply the results we've gathered to the group and block described in \Cref{main}. We start with a robust result regarding the intersection of the defect group with normal subgroups, a consequence of the unique action of a Singer cycle. We'll use this to show that $G$ contains no normal subgroups of order or index a positive power of $2$, and that $G$ contains a unique component. 

\begin{lemma} \label{DcapN}
Assume the conditions of \Cref{main}. Let $N \lhd G$. Then $D \cap N$ is trivial or $D \cap N \cong (C_{2^{m'}})^n$ where $m' \leq m$. In particular, if $D \cap N$ is non-trivial then $\Omega(D) \leq N$.
\end{lemma}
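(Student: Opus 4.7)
My plan is to exploit the transitive action of the Singer cycle $E$ on $\Omega(D)\setminus\{1\}$ recorded in Remark \ref{homo} to constrain the $E$-invariant subgroups of $D$. The first observation is that $D\cap N$ is stable under the action of the inertial quotient $\mathbb{E}$: the group $N_G(D,B_D)$ normalises $D$ by definition and normalises $N$ since $N\lhd G$, hence it normalises $D\cap N$, and the action factors through $\mathbb{E}=N_G(D,B_D)/C_G(D)$. In particular, $D\cap N$ is stable under the Singer cycle subgroup $E\leq \mathbb{E}$.

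The second step is to reduce to the elementary abelian level. The subgroup $\Omega(D\cap N)=\Omega(D)\cap N$ is an $E$-invariant subgroup of $\Omega(D)$, and since $E$ acts transitively on $\Omega(D)\setminus\{1\}$, this subgroup is either trivial or all of $\Omega(D)$. If $\Omega(D\cap N)$ is trivial then $D\cap N$ is a $2$-group with trivial socle, hence is itself trivial, and we are done. Otherwise $\Omega(D)\leq N$, which will already yield the ``in particular'' clause.

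The remaining task — upgrading this to $D\cap N\cong (C_{2^{m'}})^n$ — is the main obstacle, and I plan to handle it by iterating the previous dichotomy. Write $D\cap N \cong C_{2^{a_1}}\times\cdots\times C_{2^{a_r}}$ with $1\leq a_1\leq\cdots\leq a_r\leq m$. The socle has rank $r$ and equals $\Omega(D)$ of rank $n$ by the previous paragraph, so $r=n$. To force all $a_i$ equal, consider the image of $D\cap N$ under the $2^{a_1}$-power map,
\[
(D\cap N)^{2^{a_1}}=\{x^{2^{a_1}}:x\in D\cap N\},
\]
which is again an $E$-invariant subgroup of $D$. Its socle has rank $|\{i:a_i>a_1\}|$, which is at most $n-1$ since the index $i=1$ does not contribute. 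Applying the dichotomy of the previous paragraph to this $E$-invariant subgroup forces the socle to have rank $0$, so $a_i=a_1$ for all $i$. Setting $m':=a_1\leq m$ gives $D\cap N\cong (C_{2^{m'}})^n$.

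The ``in particular'' clause follows immediately: if $D\cap N$ is non-trivial then $\Omega(D\cap N)=\Omega(D)$, so $\Omega(D)\leq N$. No block-theoretic input beyond the $E$-invariance of $D\cap N$ is needed; the whole argument is a twofold application of the transitivity result of Remark \ref{homo}.
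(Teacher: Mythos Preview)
Your proof is correct and follows essentially the same route as the paper: both exploit the $\mathbb{E}$-invariance of $D\cap N$ together with the transitivity of $E$ on $\Omega(D)\setminus\{1\}$ to obtain $\Omega(D)\leq N$, and then deduce that $D\cap N$ is homocyclic of rank $n$. The paper phrases the last step via a single $\mathbb{E}$-orbit of generators of $D$ (Remark~\ref{homo}), whereas you reapply the socle dichotomy to the power subgroup $(D\cap N)^{2^{a_1}}$; these are minor variants of the same idea.
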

\begin{proof}
Suppose $D \cap N$ is non-trivial. Then it contains an element $d$ of order $2$, i.e. a non-trivial element of $\Omega(D)$. \Cref{Sings} tells us that $\mathbb{E}$ acts transitively on the non-trivial elements of $\Omega(D)$, so the $\mathbb{E}$-orbit of $d$ contains $\Omega(D)$. Since $\mathbb{E}$ acts by conjugation and $N$ is normal, this implies the orbit is contained in $N$, so $\Omega(D) \leq N$. Therefore we can write  $D \cap N \cong C_{2^{m_1}} \times C_{2^{m_2}} \times ... \times C_{2^{m_n}}$ where $1 \leq m_i \leq m$ for each $1 \leq i \leq n$. We can choose a set of generators for $D$ that lie in a single orbit under conjugation by $\mathbb{E}$, implying that $m_i=m_j$ for all $1 \leq i,j \leq n$. 
\end{proof}

\begin{lemma} \label{O22}
Assume the conditions of \Cref{main}. If $B$ is reduced and $D$ is not normal in $G$, then $O_2(G) =1$. 
\end{lemma}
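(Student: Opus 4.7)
The plan is to argue by contradiction: suppose $O_2(G)$ is non-trivial and derive an impossibility by combining the structural information about the $\mathbb{E}$-action on $D$ (\Cref{homo Sing lem}) with the general subgroup-structure tool \Cref{O2}.

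First I would observe that $O_2(G)$, being a normal $2$-subgroup of $G$, is automatically contained in every defect group of every $2$-block of $G$, hence $O_2(G) \leq D$. Since $D$ is assumed not to be normal in $G$, while $O_2(G)$ is normal in $G$, the containment $O_2(G) \leq D$ must be strict. Assuming for contradiction that $O_2(G) \neq 1$, \Cref{DcapN} applied to $N = O_2(G)$ then forces $O_2(G) \cong (C_{2^{m'}})^n$ for some $1 \leq m' \leq m$, and in particular $\Omega(D) \leq O_2(G)$.

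Now I would feed $H := O_2(G)$ into \Cref{O2}: its hypotheses ($B$ reduced, $D$ abelian and not normal, $H \lhd G$ with $H < D$) are all in place, so the conclusion yields a non-trivial normal subgroup $\mathbb{E}_0 \lhd \mathbb{E}$ centralising $O_2(G)$. In particular, $\mathbb{E}_0$ centralises the non-trivial subgroup $\Omega(D) \leq O_2(G)$. On the other hand, \Cref{homo Sing lem} (\ref{'3}), applied with $Q = D$ and the non-trivial normal subgroup $\mathbb{E}_0$ of $\mathbb{E}$, states that $C_D(\mathbb{E}_0) = 1$. This contradicts the fact that $\Omega(D) \leq C_D(\mathbb{E}_0)$ is non-trivial, completing the proof.

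There is no real obstacle here; the lemma is a short combinatorial consequence of tools already assembled in Sections \ref{section Singer} and \ref{section subgroup}. The only point requiring mild care is verifying that the strict inequality $O_2(G) < D$ holds, so that \Cref{O2} can legitimately be invoked, and this is immediate from the hypothesis that $D$ is not normal together with the normality of $O_2(G)$.
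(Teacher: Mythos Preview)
Your proof is correct and follows essentially the same route as the paper: invoke \Cref{O2} with $H=O_2(G)$ to produce a non-trivial normal subgroup of $\mathbb{E}$ centralising $O_2(G)$, then contradict this via \Cref{homo Sing lem}~(\ref{'3}). The only difference is that your detour through \Cref{DcapN} and $\Omega(D)$ is unnecessary: once $\mathbb{E}_0$ centralises the non-trivial group $O_2(G)\leq D$, you already have $1\neq O_2(G)\leq C_D(\mathbb{E}_0)=1$, and the contradiction is immediate without ever mentioning $\Omega(D)$.
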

\begin{proof}
\Cref{O2} tells us that $O_2(G)$ is centralised by a non-trivial normal subgroup $H$ of $\mathbb{E}$, but \Cref{homo Sing lem} tells us that, under these conditions, $C_D(H) = 1$. Therefore $O_2(G) =1$. 
\end{proof}

We will often be able to make the assumption that $D$ is not normal in $G$, as we do in \Cref{O22}, because when $D$ is normal, we fully understand the structure of $B$:

\begin{proposition} \label{normal}
	Assume the conditions of \Cref{main}. If $D$ is normal in $G$, then $B$ is Puig equivalent to $\mathcal{O}(D \rtimes \mathbb{E})$. 
\end{proposition}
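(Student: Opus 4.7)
The plan is to apply K\"ulshammer's description of blocks with normal defect group and then vanish the resulting cohomological twist. Since $D \lhd G$, \Cref{kuls} yields a Puig equivalence $B \sim \mathcal{O}_\gamma(D \rtimes \mathbb{E})$ for some $\gamma \in O_{2'}(H^2(\mathbb{E}, \mathcal{O}^\times))$, and the proposition follows once we show $\gamma$ is trivial, since then $\mathcal{O}_\gamma(D \rtimes \mathbb{E}) \cong \mathcal{O}(D \rtimes \mathbb{E})$. In fact I would prove the stronger statement that $H^2(\mathbb{E}, \mathcal{O}^\times) = 0$.

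Because $\mathcal{O}$ has residue characteristic $2$ and $\mathbb{E}$ is a $2'$-group, the principal units $1+\mathfrak{m}$ form a pro-$2$-group, so $H^i(\mathbb{E}, 1+\mathfrak{m}) = 0$ for $i \geq 1$; the short exact sequence $1 \to 1+\mathfrak{m} \to \mathcal{O}^\times \to k^\times \to 1$ then identifies $H^2(\mathbb{E}, \mathcal{O}^\times)$ with the Schur multiplier $M(\mathbb{E}) = H^2(\mathbb{E}, k^\times)$. To compute this, I would use the Lyndon-Hochschild-Serre spectral sequence attached to $1 \to E \to \mathbb{E} \to F \to 1$ from \Cref{homo Sing lem} (\ref{'1}): since $E$ and $F$ are cyclic and $k^\times$ is divisible, $H^{2i}(E, k^\times) = H^{2i}(F, k^\times) = 0$ for $i \geq 1$, killing both $E_2^{0,2}$ and $E_2^{2,0}$. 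The only surviving contribution to the $p+q=2$ diagonal is thus $E_2^{1,1} = H^1(F, H^1(E, k^\times)) = H^1(F, \mu_{2^n-1})$, where $F$ acts on $\mu_{2^n-1}$ through the contragredient of its Frobenius action on $E$.

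The main obstacle is then verifying $H^1(F, \mu_{2^n-1}) = 0$. The contragredient action turns the Frobenius $a \mapsto a^{r}$, $r = 2^{n/|F|}$, into multiplication by $r^{-1}$ on $\mathbb{Z}/(2^n-1)$. Using the identity $\gcd(2^a-1, 2^b-1) = 2^{\gcd(a,b)}-1$ one finds $\gcd(2^n-1, r-1) = r-1$, and a direct calculation in $\mathbb{Z}$ gives the norm $N = 1 + r^{-1} + \cdots + r^{-(|F|-1)} \equiv (2^n-1)/(r-1) \pmod{2^n-1}$. Both $\ker(\cdot N)$ and $\mathrm{im}(\cdot(r^{-1}-1))$ in $\mathbb{Z}/(2^n-1)$ then have cardinality $(2^n-1)/(r-1)$, and since $\mathrm{im}(\cdot(r^{-1}-1)) \subseteq \ker(\cdot N)$ is automatic, equality forces $H^1(F, \mu_{2^n-1}) = 0$. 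Combining this with the previous steps gives $H^2(\mathbb{E}, \mathcal{O}^\times) = 0$ and hence the desired Puig equivalence.
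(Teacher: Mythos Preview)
Your argument is correct and reaches the same conclusion as the paper, namely that the Schur multiplier of $\mathbb{E}$ vanishes, but by a different route. The paper observes that $\mathbb{E}=E\rtimes F$ is metacyclic and hence admits a deficiency-zero presentation $\langle x,y\mid yxy^{-1}=x^{r},\ y^{|F|}=1\rangle$ (with $r=2^{n/|F|}$; note that the relation $y^{|F|}=1$ forces $x^{2^{n}-1}=1$), and then invokes the classical fact that finite groups of deficiency zero have trivial Schur multiplier. Your approach instead runs the Lyndon--Hochschild--Serre spectral sequence for $1\to E\to\mathbb{E}\to F\to 1$ and kills the only potentially nonzero term $E_2^{1,1}=H^1(F,\mu_{2^n-1})$ by an explicit norm/image count in $\mathbb{Z}/(2^n-1)$. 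The deficiency-zero argument is shorter and more conceptual once one knows the relevant presentation-theoretic fact, while your computation is self-contained and makes the vanishing completely explicit; both are really just two proofs of the standard statement that metacyclic groups have trivial Schur multiplier. One small remark: since $\mathbb{E}$ is a $2'$-group, the $2'$-part $O_{2'}(H^2(\mathbb{E},\mathcal{O}^\times))$ already coincides with the full $H^2(\mathbb{E},\mathcal{O}^\times)$, so your ``stronger'' statement is in fact equivalent to what is needed.
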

\begin{proof}
	When $D$ is normal in $G$, \Cref{kuls} tells us that $B$ is Puig equivalent to a twisted group algebra $\mathcal{O}_\gamma(D \rtimes \mathbb{E})$, where $\gamma \in O_{p'}(H^2(\mathbb{E},\mathcal{O}^\times))$. Note that if $\mathbb{E}$ has trivial Schur multiplier, then $\gamma$ must be trivial. 
	
	A finite group is said to have deficiency zero if it admits a presentation with an equal number of generators and relations, and \cite[p.87]{johnson1997} tells us that finite groups of deficiency zero have trivial Schur multiplier. Considering the presentation  $E \rtimes F=\langle x, y \divides yxy^{-1}=x^2, y^{|F|}=1 \rangle $, $\mathbb{E}$ has deficiency zero, implying $\gamma$ is trivial and so $B$ is Puig equivalent to  $\mathcal{O}(D \rtimes E)$. 
\end{proof}

Now we focus our attention on the components of $G$.

\begin{lemma} \label{E acts trans}
Assume the conditions of \Cref{main}. If $B$ is reduced and $D$ is not normal in $G$, then $E$ acts transitively on the components. Further, the action is faithful if there is more than one component. 
\end{lemma}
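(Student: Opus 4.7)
The plan is to combine the structural results already established (Lemmas \ref{O22}, \ref{tool Z(G) < Z(F^*(G)) < O2(G)Z(G)}, \ref{tool LicapLj and DcapZLi in O2ZG if O2 is 1 then int odd n D is directprod}, Corollary \ref{tool no nilp comps}) with the Singer cycle structure of $E$ on $\Omega(D)$, handling transitivity and faithfulness in turn.

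First I would record the standing consequences of the hypotheses. \Cref{O22} gives $O_2(G) = 1$, so the (characteristic) $2$-part of $Z(G)$ is contained in $O_2(G) = 1$, whence $D \cap Z(G) = 1$. Let $L_1, \ldots, L_t$ be the components; \Cref{tool LicapLj and DcapZLi in O2ZG if O2 is 1 then int odd n D is directprod}(\ref{lem4}) gives $D \cap E(G) = \prod_i(D \cap L_i)$ and that $L_i \cap L_j$ is a $2'$-group for $i \neq j$. For each $i$, $D \cap L_i$ is a defect group of the $b_E$-covered block $b_i$ of $L_i$ by \Cref{alp15}(\ref{alpintersect}), and $b_i$ is non-nilpotent by \Cref{tool no nilp comps}, so $D \cap L_i \neq 1$. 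A short verification then shows that $E$ acts on $\{L_1, \ldots, L_t\}$: any $c \in C_G(D)$ fixes $D$ pointwise, so if $c L_i c^{-1} = L_j$ then $D \cap L_i = D \cap L_j \leq L_i \cap L_j$, which is both a $2$-group and a $2'$-group, hence trivial, contradicting $D \cap L_i \neq 1$ unless $i = j$.

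For transitivity, partition $\{L_1, \ldots, L_t\}$ into $E$-orbits, and for each orbit let $M$ be the central product of its components and $D_M := D \cap M = \prod_{L \subset M}(D \cap L)$. Each $D_M$ is a non-trivial $E$-invariant subgroup of $D$. For distinct $M, M'$ arising from distinct orbits, $D_M \cap D_{M'} \leq D \cap (M \cap M') \leq D \cap Z(E(G)) \leq D \cap Z(G) = 1$. Passing to socles, the $\Omega(D_M)$ are non-trivial pairwise-disjoint $E$-invariant subgroups of $\Omega(D) \cong \mathbb{F}_2^n$, which is impossible unless there is a single orbit, since $E$ acts irreducibly on $\Omega(D)$ as a Singer cycle.

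For faithfulness under the assumption $t \geq 2$, let $K \leq E$ be the kernel of the action on $\{L_1, \ldots, L_t\}$; then $K \lhd \mathbb{E}$ because $F$ merely permutes the components and hence normalises $K$. Transitivity forces the $\Omega(D \cap L_i)$ to have equal $\mathbb{F}_2$-dimension $n/t$, and each is $K$-invariant. Identifying $\Omega(D) = \mathbb{F}_{2^n}^+$ with $E$ acting by multiplication in $\mathbb{F}_{2^n}^*$, a generator of $K$ acts as multiplication by an element of $\mathbb{F}_{2^n}^*$ of order $|K| = (2^n - 1)/t$, and $K$-invariance of $\Omega(D \cap L_i)$ forces it to be a subspace over the subfield $\mathbb{F}_{2^r} \leq \mathbb{F}_{2^n}$ generated by this element, where $r = \mathrm{ord}_{(2^n - 1)/t}(2)$. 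Then $r \mid n/t$, and therefore $|K| \leq 2^r - 1 \leq 2^{n/t} - 1$, contradicting the elementary inequality $(2^n - 1)/t > 2^{n/t} - 1$ valid for all $t \geq 2$. The main obstacle is this last step: \Cref{homo Sing lem}(\ref{'3}) tempts one to seek a fixed-point contradiction via $C_D(K) \neq 1$, but $K$ acts on $\Omega(D)$ by scalar multiplication and hence has no non-zero fixed vector, so one has to exploit instead the subfield structure forced on the small summand $\Omega(D \cap L_i)$ by its $K$-invariance, and compare the resulting dimension to the order of $K$.
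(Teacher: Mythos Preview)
Your proof is correct, and in fact more explicit than the paper's, though the underlying ideas are the same.

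For transitivity, the paper takes a slightly longer route: it first shows that $G$ acts transitively on the components by applying \Cref{DcapN} to the normal subgroups generated by two distinct $G$-orbits of components (forcing $\Omega(D)$ into their intersection, which lies in $O_2(G)=1$). Only then does it invoke the transitive action of $E$ on $\Omega(D)\setminus\{1\}$ to deduce $E$-transitivity on components. You bypass the $G$-transitivity step entirely and argue directly with $E$-orbits via irreducibility of the Singer action; this is cleaner and avoids the detour through \Cref{DcapN}.

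For faithfulness, the paper compresses everything into the single sentence ``since $E$ acts transitively and faithfully on the non-trivial elements of $\Omega(D)$ \ldots this implies $E$ acts \ldots faithfully on the set of components,'' leaving the reader to unpack the regularity of the Singer action. Your subfield argument makes this rigorous: the kernel $K$ has order $(2^n-1)/t$ because a cyclic transitive subgroup of $S_t$ has order exactly $t$, and $K$-invariance of $\Omega(D\cap L_i)$ forces the size bound $|K|\le 2^{n/t}-1$, contradicting the elementary inequality. Note that this inequality is precisely what the paper isolates separately as \Cref{tool E in wreath nope} and uses only in the \emph{next} lemma to show $t=1$; you have effectively front-loaded that computation. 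An even shorter variant of your argument: since $E$ acts \emph{freely} on $\Omega(D)\setminus\{1\}$, so does $K$ on each $(\Omega(D)\cap L_i)\setminus\{1\}$, whence $|K|$ divides $2^{n/t}-1$ directly, without the subfield identification.

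One small expository point: the step ``transitivity forces the $\Omega(D\cap L_i)$ to have equal $\mathbb{F}_2$-dimension $n/t$'' relies on $\Omega(D)\le E(G)$, which you do obtain implicitly from your transitivity argument (the single $\Omega(D_M)$ must be all of $\Omega(D)$ by irreducibility), but it would be worth saying so.
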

\begin{proof}
Suppose that $G$ contains components $L_n$ and $L_m$ that are not conjugate in $G$, and denote by $N$ and $M$ the subgroups generated by the $G$-conjugates of $L_n$ and $L_m$, respectively. If $D \cap N=1$ then $B$ covers a nilpotent block of $N$, which covers a nilpotent block of $L_n$, contradicting \Cref{tool no nilp comps}. Thus, by \Cref{DcapN}, $\Omega(D) \leq N$. By the same argument, $\Omega(D) \leq M$, implying $\Omega(D) \leq N \cap M$. However, \Cref{tool LicapLj and DcapZLi in O2ZG if O2 is 1 then int odd n D is directprod} tells us that $D \cap (N \cap M) \leq O_2(G)$, and \Cref{O22} tells us $O_2(G)=1$, implying $\Omega(D) =1$. This is a contradiction. Thus the components of $G$ are pairwise conjugate in $G$. 

This implies $\Omega(D) \cap L_i \cong \Omega(D) \cap L_j$ for each pair of components $L_i$ and $L_j$. Since $E$ acts transitively and faithfully on the non-trivial elements of $\Omega(D)$ by \Cref{Sings} (\ref{Singsregular}), this implies $E$ acts transitively and faithfully on the set of components. \qedhere


\end{proof}

We will use \Cref{E acts trans} to prove that $G$ contains a unique component, but to do so we need the following:

\begin{lemma} \label{tool E in wreath nope}
Let $n=tk$ be an integer with $t,k \geq 2$. Then there is no embedding of $C_{2^n-1}$ into $C_{2^k-1} \wr C_t$. 
\end{lemma}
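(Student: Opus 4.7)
The plan is to bound the exponent of $W := C_{2^k-1} \wr C_t$ and show it is too small to accommodate an element of order $2^n - 1$. Since $C_{2^n-1}$ embeds into a group if and only if that group contains an element of order $2^n - 1$, this is enough.

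First I would establish that $\exp(W) \mid t(2^k-1)$. Taking an arbitrary $g = (a_1,\ldots,a_t)\tau \in W$ and letting $d = |\tau|$ (a divisor of $t$), a direct computation with the wreath product multiplication gives that $g^d$ lies in the base group $(C_{2^k-1})^t$, each of its coordinates being a product of entries along a single $\tau$-orbit. Since the base group has exponent $2^k - 1$, the order of $g^d$ divides $2^k - 1$, and hence $|g|$ divides $d(2^k-1) \leq t(2^k-1)$.

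Next, suppose for contradiction that $(2^n - 1) \mid t(2^k-1)$. Using the factorisation
\[
2^{tk} - 1 = (2^k - 1)\bigl(1 + 2^k + 2^{2k} + \cdots + 2^{(t-1)k}\bigr),
\]
this forces $S := \sum_{i=0}^{t-1} 2^{ik}$ to divide $t$, and in particular $S \leq t$. But for $t, k \geq 2$, the single term $2^{(t-1)k}$ already satisfies $2^{(t-1)k} \geq 2^{t-1} \geq t$, and the sum contains further strictly positive terms, so $S > t$. This contradicts $S \mid t$, completing the argument.

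The only step that requires any care is the exponent bound on the wreath product; the rest is elementary arithmetic with the geometric-series factorisation of $2^{tk} - 1$, so I do not foresee a significant obstacle.
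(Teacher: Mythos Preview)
Your proof is correct and follows essentially the same approach as the paper: both bound the exponent of $C_{2^k-1}\wr C_t$ by $t(2^k-1)$ and then verify arithmetically that $2^{tk}-1$ is too large. The only difference is cosmetic---the paper shows directly that $2^{tk}-1 > t(2^k-1)$ via the chain $2^{kt}-t2^k \geq 2^{k+t}-t2^k = 2^k(2^t-t) > 0$, while you use the geometric-series factorisation to reduce to $S \mid t$ and then observe $S > t$; both arguments are equally elementary.
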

\begin{proof}
Recall that the exponent of a group is the least common multiple of the orders of each of its elements. The exponent of $C_{2^k-1} \wr C_t$ is at most $(2^k-1)  t$.  Thus $$(2^{kt}-1) - t(2^k-1)=2^{kt}-t2^k+(t-1) > 2^{kt}-t2^k$$ $$ \geq 2^{k+t}-t2^k=2^k(2^t-t) > 0$$ 	Thus $2^{kt}-1$ is larger than the exponent of $C_{2^k-1} \wr C_t$, so the wreath product can't contain an element of this order.
\end{proof}

Recall that a group $H$ is \textit{indecomposable} if it it has no proper non-trivial direct factor. Note that quasisimple groups are indecomposable. 

\begin{lemma} \label{1comp}
	Assume the conditions of \Cref{main}. If $B$ is reduced then $G$ contains a unique, normal quasisimple group $L$. 
\end{lemma}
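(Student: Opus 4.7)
The plan is to argue by contradiction: assuming $G$ has two or more components, I exploit the fact that a cyclic group acting faithfully and transitively on a set of size $t$ must have order exactly $t$, forcing a numerical impossibility. First I reduce to the case where $D$ is not normal in $G$, since when $D \lhd G$ the structure of $B$ is pinned down by \Cref{normal}; \Cref{O22} then gives $O_2(G) = 1$.

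Next I need to know $G$ has components at all. If $E(G) = 1$, then \Cref{tool Z(G) < Z(F^*(G)) < O2(G)Z(G)} together with $O_2(G) = 1$ forces $F^*(G) = F(G) = O_{2'}(G) \leq Z(G)$, and the self-centralising property $C_G(F^*(G)) \leq F^*(G)$ then yields $G = Z(G)$, contradicting the assumption that $D$ is not normal. I then show $\Omega(D) \leq E(G)$: \Cref{DcapN} applied to $E(G) \lhd G$ allows only $D \cap E(G) = 1$ or $\Omega(D) \leq E(G)$, and the former would produce a nilpotent $B$-covered block of $E(G)$, forcing $E(G) \leq Z(G)$ by reducedness --- impossible since $E(G)$ is a nontrivial central product of quasisimple (hence perfect) groups. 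Using \Cref{tool LicapLj and DcapZLi in O2ZG if O2 is 1 then int odd n D is directprod} I then get $D \cap E(G) = \prod_{i=1}^{t}(D \cap L_i)$, and transitivity of $E$ on the $L_i$ (\Cref{E acts trans}) makes all the factors isomorphic of some common rank $r$; comparing with $\mathrm{rank}(\Omega(D)) = n$ gives $n = tr$.

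Assuming $t \geq 2$ for contradiction, \Cref{E acts trans} further tells us the action of $E$ on the $t$ components is faithful, so $E$ embeds as a cyclic transitive subgroup of $S_t$; the only such subgroups are generated by a $t$-cycle, so $|E| = t$ and hence $t = 2^n - 1$. Substituting into $n = tr$ yields $n = (2^n - 1)r \geq 2^n - 1$, impossible for $n \geq 2$. Therefore $t = 1$, and $L := L_1$ is the unique component, automatically normal in $G$ as the sole $G$-conjugate of itself. The main obstacle will be stringing the structural lemmas together correctly to obtain the three ingredients $E(G) \neq 1$, $\Omega(D) \leq E(G)$, and $n = tr$ --- each invokes reducedness and $O_2(G) = 1$ in a slightly different way --- after which the final contradiction is a one-line cardinality check. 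An alternative finish would embed $E \hookrightarrow C_{2^r-1} \wr C_t$ by combining the Singer cycle freeness from \Cref{Sings}(\ref{Singsregular}) with the wreath structure, and then apply \Cref{tool E in wreath nope} for $r, t \geq 2$, handling $r = 1$ by the bound $|H| \leq 2^r - 1 = 1$ on the stabiliser of $L_1$.
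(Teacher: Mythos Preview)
Your proof is correct and in fact takes a cleaner route than the paper's. The paper argues by lifting the conjugation action to an embedding $E \hookrightarrow \operatorname{Aut}(L_1 \times \cdots \times L_t) \cong \operatorname{Aut}(L) \wr S_t$, then tracks a generator inside $C_{2^k-1}\wr C_t$ and invokes \Cref{tool E in wreath nope} to rule out $k,t \geq 2$ (handling $k=1$ via \Cref{tool no nilp comps}). You instead squeeze the faithfulness statement of \Cref{E acts trans} for all it is worth: a cyclic group acting faithfully and transitively on $t$ points is regular, so $|E|=t=2^n-1$, and together with $n=tr$ this gives the one-line contradiction $n\geq 2^n-1$. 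This bypasses the wreath-product bookkeeping entirely and renders \Cref{tool E in wreath nope} unnecessary for this lemma. You also supply two things the paper's proof leaves implicit: the argument that $E(G)\neq 1$ (ruling out $t=0$), and the verification that $\Omega(D)\leq E(G)$ so that the rank count $n=tr$ is available.

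One small remark on your opening reduction: saying ``when $D\lhd G$ the structure of $B$ is pinned down by \Cref{normal}'' does not literally establish the lemma's conclusion in that case --- if $D\lhd G$ then $G$ may well be solvable with no components at all (e.g.\ $G=A_4$). The paper's proof has exactly the same tacit assumption that $D$ is not normal (it invokes \Cref{E acts trans}, which requires this), and the lemma is only ever applied in the main proof after $D\lhd G$ has been excluded, so this is not a defect relative to the paper; but strictly speaking the lemma as stated needs the extra hypothesis $D\not\lhd G$.
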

\begin{proof}
Write $E(G)=L_1 * ... * L_t$, so that $t$ is the number of components of $G$. Suppose $t>1$. By \Cref{E acts trans}, $E$ acts transitively and faithfully on the set of components, and \cite[Lemma 1.7]{harrisEG} tells us that this action can be lifted to an action on $L_1 \times ... \times L_t$. That is, there is an embedding $E \hookrightarrow \operatorname{Aut}(L_1 \times ... \times L_t)$. Since the action is transitive, $L_i \cong L_j$ for each $1 \leq i,j \leq t$. Since quasisimple groups are indecomposable, \cite[Corollary 3.3]{automorphisms} tells us that, if $\operatorname{Hom}(L_i,Z(L_i))=1$, then $\operatorname{Aut}(L_1 \times ... \times L_t)=\operatorname{Aut}(L) \wr S_t$ where $L \cong L_i$. Quasisimple groups are perfect, and homomorphisms from perfect groups to abelian groups are trivial, implying that we do have $\operatorname{Hom}(L_i,Z(L_i))=1$ for each component $L_i$, and so $\operatorname{Aut}(L_1 \times ... \times L_t) \cong \operatorname{Aut}(L) \wr S_t$. Thus we have an injective homomorphism $\lambda: E \hookrightarrow \operatorname{Aut}(L) \wr S_t$. 
	
Let $\pi$ be the canonical projection $N_G(D,B_D) \to N_G(D,B_D)/C_G(D)$. Take $x \in N_G(D,B_D)$ such that $\pi(x)$ generates $E \cong C_{2^n-1}$, and let $\tilde{x}$ denote $\lambda \circ \pi (x) \in \operatorname{Aut}(L) \wr S_t$. Write $\tilde{x}=(\phi_x,\sigma_x)$, where $\phi_x=(\phi_{x_1},...,\phi_{x_t})$, $\phi_{x_i} \in \operatorname{Aut}(L_i)$, and $\sigma_x \in S_t$. Since $E$ acts transitively on the components by \Cref{E acts trans}, and $\phi_x$ fixes each component, $\sigma_x$ must be a cycle of length $t$, and the subgroups generated by each $\phi_{x_i}$ must act transitively on the non-trivial elements of $(C_2)^k \cong \Omega(D) \cap L_i$, for each $L_i$. Thus, $\phi_{x_i}$ must be a Singer cycle of $\operatorname{Aut}(\Omega(D) \cap L_i) \cong GL_k(2)$, i.e. $\phi_{x_i}$ has order $2^k-1$. 
	 
Since $\lambda$ is injective, this implies $\tilde{x}$ generates a subgroup of order $2^n-1$ in $C_{2^k-1} \rtimes C_t$. However, \Cref{tool E in wreath nope} tells us there is no such embedding when $k,t \geq 2$. If $k=1$, then each $b_i$ has cyclic defect group, and thus is nilpotent, contradicting \Cref{tool no nilp comps}. Therefore $t=1$. \end{proof}

\begin{lemma} \label{G/L solv}
Assume the conditions of \Cref{main}. If $B$ is reduced, then $G/F^*(G)$ and $G/L$ are solvable. 
\end{lemma}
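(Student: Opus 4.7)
The plan is to show that $F^*(G) = Z(G) \cdot L$, then exploit the self-centralising property of $F^*(G)$ to embed $G/F^*(G)$ into $\operatorname{Out}(L)$, and finally invoke the Schreier conjecture to conclude.

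First I would collect the structural consequences of $B$ being reduced. By \Cref{tool Z(G) < Z(F^*(G)) < O2(G)Z(G)}, $O_{2'}(G) \leq Z(G)$. The case where $D \trianglelefteq G$ is already handled by \Cref{normal}, so I may assume $D$ is not normal in $G$; then \Cref{O22} gives $O_2(G) = 1$, whence $F(G) = O_2(G) \times O_{2'}(G) = Z(G)$. By \Cref{1comp}, $G$ has a unique component $L$, so $E(G) = L$ and $F^*(G) = F(G) \cdot E(G) = Z(G) \cdot L$.

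Next I would pin down $C_G(L)$. The containment $C_G(F^*(G)) \leq F^*(G)$, together with $Z(G) \leq C_G(L)$, gives $C_G(L) = C_G(Z(G) L) = C_G(F^*(G)) \leq F^*(G)$. Using $Z(G) \leq C_G(L)$ again, we then have $F^*(G) = Z(G)L \leq L \cdot C_G(L) \leq F^*(G)$, so $L \cdot C_G(L) = F^*(G)$. Conjugation gives a map $G \to \operatorname{Aut}(L)$ with kernel $C_G(L)$ and image containing $\operatorname{Inn}(L)$, so
\[ G/F^*(G) = G/(L \cdot C_G(L)) \hookrightarrow \operatorname{Aut}(L)/\operatorname{Inn}(L) = \operatorname{Out}(L). \]

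To prove $\operatorname{Out}(L)$ is solvable I would invoke the Schreier conjecture. Since $L$ is perfect, $\operatorname{Hom}(L, Z(L)) = 1$, so the natural restriction $\operatorname{Aut}(L) \to \operatorname{Aut}(L/Z(L))$ is injective; passing to outer automorphism groups (using $\operatorname{Inn}(L) \cong L/Z(L) \cong \operatorname{Inn}(L/Z(L))$) yields an embedding $\operatorname{Out}(L) \hookrightarrow \operatorname{Out}(L/Z(L))$. Since $L/Z(L)$ is simple, the Schreier conjecture (a consequence of the classification of finite simple groups) forces $\operatorname{Out}(L/Z(L))$ to be solvable, and hence so are $\operatorname{Out}(L)$ and $G/F^*(G)$. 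Finally, $F^*(G)/L = Z(G)L/L \cong Z(G)/(Z(G) \cap L)$ is abelian, so $G/L$ is an extension of an abelian group by a solvable one, and therefore solvable. The only delicate step is the reduction from $\operatorname{Out}(L)$ to $\operatorname{Out}(L/Z(L))$, which relies on the perfectness of the quasisimple $L$; everything else is a direct assembly of the structural lemmas already proved in this section.
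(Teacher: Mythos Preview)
Your proof is correct and follows essentially the same route as the paper: reduce to $F^*(G)=Z(G)L$ via \Cref{O22} and \Cref{1comp}, embed $G/F^*(G)$ into $\operatorname{Out}(L)$, invoke Schreier, and conclude $G/L$ is solvable since $F^*(G)/L$ is abelian. You are in fact more careful than the paper on two points: you compute $C_G(L)=C_G(F^*(G))$ directly to justify the embedding into $\operatorname{Out}(L)$, and you explicitly pass from $\operatorname{Out}(L)$ to $\operatorname{Out}(L/Z(L))$ via perfectness of $L$ before applying Schreier's conjecture, whereas the paper applies Schreier directly to the quasisimple $L$ without comment.
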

\begin{proof}
 $C_G(F^*(G)) \leq F^*(G)$, so we can embed $G/F^*(G) \hookrightarrow \operatorname{Out}(F^*(G))$. \Cref{1comp} tells us that $E(G)=L$, and \Cref{O22} tells us that $O_2(G)=1$, so $F^*(G)=LZ(G)$. Since $G$ acts trivially on its centre, we have that $G/F^*(G) \leq \operatorname{Out}(L)$. Schreier's conjecture, which has been verified by the classification of finite simple groups, says that $\operatorname{Out}(L)$ is solvable, implying that $G/F^*(G)$ is solvable. Since $F^*(G)/L \leq Z(G)$, $G/L$ is also solvable. \end{proof}


Now we are ready to prove \Cref{main}:
\begin{proof}
Let $G$ be a finite group, and $B$ a block of $\mathcal{O}G$ with defect group $D \cong (C_{2^m})^n$ and inertial quotient $\mathbb{E}$ containing an element of order $2^n-1$. Then \Cref{intro singer maximal sub} tells us that $C_{2^n-1} \leq \mathbb{E} \leq C_{2^n-1} \rtimes C_n$. Thus $\mathbb{E}=E \rtimes F$ where $E \cong C_{2^n-1}$ and $F$ is isomorphic to an odd-order subgroup of $C_n$. \Cref{homo} tells us that $E$ acts freely on $D \backslash \{1\}$, and transitively on $\Omega(D) \backslash \{1\}$. 


We suppose $B$ is a minimal counterexample: in particular, suppose $([G:O_{2'}(Z(G))],|G|)$ is minimised in the lexicographic ordering such that $B$ is not basic Morita equivalent to the principal block of any of $\mathcal{O}(SL_2(2^n) \rtimes F)$, $\mathcal{O}(D \rtimes \mathbb{E})$, or $\mathcal{O}J_1$.

We may assume $B$ is reduced in the manner described in \Cref{reduce remark}: suppose $N \lhd G$ and $b_N$ is a block of $\mathcal{O}N$ covered by $B$. \Cref{F1} tells us that there is a block $B_I$ of the stabiliser of $b_N$ in $G$ under conjugation that is Morita equivalent to $B$, with the same defect group and inertial quotient. By minimality, $B=B_I$. Applying this to every normal subgroup of $G$, $B$ is quasiprimitive. Further, by minimality and \Cref{F2 cor}, if $N \lhd G$ and $B$ covers a nilpotent block of $\mathcal{O}N$ then $N \leq Z(G)O_2(G)$.

If $D \lhd G$, then  \Cref{normal} tells us that $B$ is Puig equivalent to $\mathcal{O}(D \rtimes \mathbb{E})$, placing us in case (\ref{case1}) of \Cref{main} and contradicting our assumption that $B$ is a counterexample. Therefore $D$ is not normal in $G$, and so \Cref{O22} tells us that $O_2(G)=1$. This means that $Z(G)=Z(F^*(G))$, by \Cref{tool Z(G) < Z(F^*(G)) < O2(G)Z(G)}. Since $E$ acts freely on the non-trivial elements of $D$, \Cref{tool Mashke} implies that $O^2(G)=G$, i.e. $G$ has no normal subgroups of index $2$. \Cref{1comp} tells us that $G$ contains a single quasisimple component $L \lhd G$.

Since $B$ is reduced, $B$ covers a $G$-stable block $b$ of $L$. \Cref{tool no nilp comps} tells us $b$ is not nilpotent, and thus has a non-trivial defect group. Therefore, by \Cref{DcapN}, we have that $\Omega(D) \leq L$. Since $G/L$ is solvable by \Cref{G/L solv}, \Cref{solv quotient} tells us that $DL/L$ is a Sylow-$2$-subgroup of $G/L$. In fact, since $L$ cannot be $D_4(q)$ by \Cref{ekkssimple}, $\operatorname{Out}(L)$ is supersolvable by \cite[Table 5]{atlas}, and so $G/L \leq \operatorname{Out}(L)$ is also supersolvable. Thus, if $G/L$ is not odd, then there is some $L \lhd N \lhd G$ such that $[D:D \cap N]=2$, implying $B$ covers a block of $N$ with defect group $D \cap N \cong (C_{2^m})^{n-1} \times C_{2^{m-1}}$, contradicting \Cref{DcapN}. Therefore $b$ has defect group $D$ and $G/L$ is odd. \Cref{ekkssimple} lists the six possibilities for $b$ and $L$: \begin{itemize}
\item Suppose $L \cong SL_2(2^n)$ with $n \geq 2$. Then $m=1$, and $b$ has defect group $D \cong (C_2)^n$ and inertial quotient $E_b \cong C_{2^n-1}$. In particular, \Cref{outsl} tells us that $G \cong SL_2(2^n) \rtimes F$, placing us in case (\ref{case2}) of \Cref{main} and providing a contradiction.
	
\item Suppose $L \cong {}^2G_2(q)$, in which case $m=1$ and $n=|F|=3$. \cite[Proposition 3.1]{eaton3} tells us that $B$ is Puig equivalent to $b$, and by \Cref{allMor}, this implies that $B$ is Puig equivalent to $SL_2(8) \rtimes C_3$, as in case (\ref{case2}) of \Cref{main}, providing a contradiction. 

\item Suppose $L \cong Co_3$, in which case $m=1$ and $n=|F|=3$. Since $\operatorname{Out}(Co_3)=1$, we must have $G=L$ in this case, implying that $B$ is the unique non-principal block of $Co_3$. Thus \Cref{allMor} again tells us that $B$ is Puig equivalent to $SL_2(8) \rtimes C_3$, providing another contradiction. 

\item Suppose $L \cong J_1$, in which case $m=1$ and $n=|F|=3$. Since $\operatorname{Out}(J_1)=1$, we must have $G=L$. Thus $B$ is the principal block of $J_1$, placing us in case (\ref{case3}) of \Cref{main} and providing a contradiction.

\item Suppose $b$ is nilpotent-covered. Then \Cref{zhou} (\ref{zhou b nilp b inert}) tells us $b$ is inertial. Since $G/L$ is odd, \Cref{zhou} (\ref{zhou b inert B inert}) then tells us $B$ is inertial. That is, $B$ is basic Morita equivalent to $D \rtimes E$, the group in case (\ref{case1}) of \Cref{main}, contradicting our assumption.

\item Suppose $b$ lies in the final case of \Cref{ekkssimple}. Note that, in this case, there is a decomposition $D=(D \cap M_0) \times (C_{2})^2$. Such a decomposition is only possible if $D$ is elementary abelian, in which case  \Cref{ekkssimple} tells us that $L$ is of type $D_t(q)$ or $E_7(q)$, where $t/2$ is odd. Consulting \cite[Table 5]{atlas}, this implies that $G/L$ is contained in a cyclic group of odd order. Since $|E_b|=3$ is prime, we can apply \Cref{IQcor}, which tells us that there is a monomorphism $E_b \hookrightarrow \mathbb{E}$ whose image is normal. However, by \Cref{ekkssimple}, $E_b$ centralises $D \cap M_0$, while we showed in \Cref{homo Sing lem} that no normal subgroup of $\mathbb{E}$ centralises any non-trivial element of $D$. Thus we have a contradiction. 
\end{itemize}

Since \Cref{ekkssimple} lists every possibility for the structure of $b$ and $L$, and each of these yields a contradiction, there cannot be a minimal counterexample to \Cref{main}. Therefore the claim must hold. \end{proof}

\section*{Acknowledgements}
This paper is part of the work towards my PhD at the University of Manchester, which is supported by a University of Manchester Research Scholar award. I am deeply grateful to Charles Eaton, my PhD supervisor, for his crucial advice and patient support. I'm thankful to Cesare Ardito for countless productive discussions. Finally, I'm grateful to Gunter Malle for his careful readings of the preprint and helpful comments, and to Shigeo Koshitani for bringing to my attention the results and papers described in \Cref{allMor}, which improved the main result.

\newpage

\small
\bibliographystyle{amsplainab} \bibliography{biblio}

\end{document}